\newtheorem{mtheorem}{Theorem}
\newtheorem{mcorollary}[mtheorem]{Corollary}
\newtheorem{mfact}[mtheorem]{Fact}
\newtheorem{theorem}{Theorem}[section]
\newtheorem{lemma}[theorem]{Lemma}
\newtheorem{scholium}[theorem]{Scholium}
\newtheorem{corollary}[theorem]{Corollary}
\newtheorem{proposition}[theorem]{Proposition}
\theoremstyle{definition}
\newtheorem{definition}[theorem]{Definition}
\newtheorem{remark}[theorem]{Remark}
\newtheorem*{remark*}{Remark}
\numberwithin{equation}{section}
\newcommand{\eqdef}{\stackrel{\scriptscriptstyle\rm def}{=}}
\DeclareMathOperator{\clocon}{\overline{\rm conv}}
\DeclareMathOperator{\card}{card}
\DeclareMathOperator{\dist}{Dist}
\DeclareMathOperator{\interior}{int}
\DeclareMathOperator{\Mod}{Mod}
\def\D{\lVert F\rVert}
\def\Kdois{K_2}
\def\varkappa{\kappa}
\def\cA{\mathscr{A}}
\def\bN{\mathbb{N}}
\def\bZ{\mathbb{Z}}
\def\bR{\mathbb{R}}
\def\bS{\mathbb{S}}
\def\cS{\mathcal{S}}
\def\cT{\mathcal{T}}
\def\cU{\mathcal{U}}
\def\cV{\EuScript{V}}
\def\cM{\EuScript{M}}
\def\cL{\EuScript{L}}
\def\fX{\mathfrak{X}}
\def\fT{\mathfrak{T}}
\DeclareMathSymbol{\varnothing}{\mathord}{AMSb}{"3F}
\renewcommand{\emptyset}{\varnothing}
\begin{document}

\title[Nonhyperbolic step skew-products]{Nonhyperbolic step skew-products: \\ Ergodic approximation}
\author[L.~J.~D\'iaz]{L. J. D\'\i az}
\address{Departamento de Matem\'atica PUC-Rio, Marqu\^es de S\~ao Vicente 225, G\'avea, Rio de Janeiro 22451-900, Brazil}
\email{lodiaz@mat.puc-rio.br}
\author[K.~Gelfert]{K.~Gelfert}
\address{Instituto de Matem\'atica Universidade Federal do Rio de Janeiro, Av. Athos da Silveira Ramos 149, Cidade Universit\'aria - Ilha do Fund\~ao, Rio de Janeiro 21945-909,  Brazil}\email{gelfert@im.ufrj.br}
\author[M.~Rams]{M. Rams} \address{Institute of Mathematics, Polish Academy of Sciences, ul. \'{S}niadeckich 8,  00-656 Warszawa, Poland}
\email{rams@impan.pl}

\begin{abstract}
We study transitive step skew-product maps modeled over a complete shift of $k$, $k\ge2$, symbols whose fiber maps are defined on the circle and
have intermingled contracting  and expanding regions. These dynamics are genuinely nonhyperbolic and exhibit simultaneously ergodic measures with positive, negative, and zero exponents.

We introduce a set of axioms for the fiber maps and study the dynamics of the resulting skew-product. These axioms turn out to capture the key mechanisms of the dynamics
of nonhyperbolic robustly transitive maps with compact central leaves.

Focusing on the nonhyperbolic ergodic measures (with zero fiber exponent) of these systems, we prove that such measures are approximated in the weak$\ast$ topology and in entropy by hyperbolic ones.
We also prove that they are  
in the intersection of the  convex hulls of the measures with positive fiber exponent and
with negative fiber exponent.
Our methods also allow us to perturb hyperbolic measures. We can perturb a
 measure with
 negative exponent directly to a measure with positive exponent (and vice-versa), however
 we lose some amount of entropy in this process. The loss of entropy is determined by the difference between the Lyapunov exponents of the measures.
 \end{abstract}

\begin{thanks}{This research has been supported  [in part] by Pronex,
CNE-Faperj, and CNPq-grants (Brazil) and
EU Marie-Curie IRSES ``Brazilian-European partnership in Dynamical
Systems" (FP7-PEOPLE-2012-IRSES 318999 BREUDS). The authors 
acknowledge the hospitality of their home institutions. LJD was partially supported by  Palis-Balzan project.  MR was supported by National Science Centre grant 2014/13/B/ST1/01033 (Poland)}\end{thanks}
\keywords{entropy, ergodic measures, Lyapunov exponents, skew-product, transitivity}
\subjclass[2000]{%
37D25, 
37D35, 
37D30, 
28D20, 
28D99
}

\maketitle
\tableofcontents

\begin{center}
\hfill \emph{ergodic measures}\\
\hfill \emph{perturb easily}\\
\hfill \emph{exponents rise or decrease}
\end{center}

\smallskip

\section{Introduction}

The aim of this paper is to understand the general structure and finer properties of the space of invariant measures of robustly transitive and robustly nonhyperbolic dynamical systems.
For a large class%
\footnote{open and dense for $C^1$ step skew-products and dense for general $C^1$ skew-products}
 of such skew-products we approximate in entropy and in the weak$\ast$ topology  ergodic measures which are nonhyperbolic (with a zero Lyapunov exponent) and have  positive entropy by measures supported on hyperbolic horseshoes, see Theorem~\ref{the:luzzaattoo}. 
 This result can be viewed as a
 nonhyperbolic version of a classical result by Katok\footnote{The result of Katok claims that
 any ergodic hyperbolic measure can be weak* and in entropy approximated by horseshoes.
  See~\cite{Kat:80,KatHas:95})  for $C^{1+\alpha}$ diffeomorphisms
and also  extensions in the context of $C^1$ diffeomorphisms with a dominated splitting in~\cite{Cro:11,LuzSan:13,Gel:}.} and also  as a partial answer to a question about abundance of hyperbolicity posed by Buzzi in~\cite[Section 1.5]{Buz:}\footnote{A bit more precisely, his question is the following:
Among partially hyperbolic diffeomorphisms with one-dimensional center direction,
are those with ``enough" hyperbolic measures $C^1$ or $C^2$ dense?}. As a consequence of our main results, in our setting,  Theorem~\ref{the:luzzaattoo} can be read as follows: the  intersection of the closed convex hull of ergodic measures with negative fiber exponent and the closed convex hull of ergodic measures with positive fiber exponent  is non-empty and contains all ergodic measures with zero exponent, see Corollary~\ref{cor:simplleexx}.

Our results are a step of a program to understand  the measure spaces, ergodic theory, and multifractal properties of general systems (diffeomorphisms, skew-product maps).
As this at the present state of the art is far too ambitious in  this vast generality, one may aim for gradually less specific classes of systems.%
\footnote{An example of this strategy can be found by the line of papers studying, in the same context, the construction of nonhyperbolic ergodic measures: \cite{GorIlyKleNal:05} (step skew-products),~\cite{KleNal:04} (skew-products and some specific open sets of diffeomorphisms), \cite{DiaGor:09,BonDiaGor:10} (generic diffeomorphisms), and~\cite{BocBonDia:} (settling open and densely the case of general diffeomorphisms).}
We  focus on partially hyperbolic systems.  The simplest, but still extremely complex, case occurs when the partially hyperbolic system has a central direction which is one-dimensional. Simplifying even one more step, in the case of  partially hyperbolic diffeomorphisms, one may assume that the central bundle is  integrable. In this case, three  different scenario can occur: there exist only non-compact leaves (DA -- derived from Anosov -- diffeomorphisms),  there exist simultaneously compact and non-compact leaves (time-1 maps of Anosov flows), or there exist only compact central leaves. The latter, and in some sense easiest, of these cases - compact central leaves - is still extremely rich  (see, for instance, the pathological behaviors of the central foliations in~\cite{RueWil:01,ShuWil:00}). On the other hand, using ingredients of one-dimensional dynamics, in this case one often has a very precise picture of the dynamics (see, for instance,~\cite{RodRodTahUre:12,KleVol:}).
As further simplification, we will  restrict ourselves to step skew-products over a complete shift with circle-fiber maps. We hope that one will be able to gradually carry this program to more general settings. In fact, it turns out that the systems studied in this paper cover already typical robustly transitive and nonhyperbolic skew-products (see Section~\ref{sss.robusttransitivity}).

Besides the fact that skew-products as a class of systems have an intrinsic interest (there is a vast literature about different aspects, we do not go into further details here), they can also serve as a first step on the way to understand general types of dynamics of diffeomorphisms or endomorphisms.
They also allow us to study essential aspects of a problem while escaping technical difficulties and this way enable us to study the problem in various steps of increasing difficulty.

To be a bit more precise, still in the partially hyperbolic setting with a nonhyperbolic
central direction, when aiming for general systems, one is confronted with several problems of completely different nature and origin.
First, restricting to systems with a one-dimensional central fiber enables us to study relatively easily their Lyapunov exponents which turn into Birkhoff averages of \emph{continuous} functions, while in the general case they are provided by the Oseledets theorem and are measurable functions only.
Moreover, in this case there is no entropy generated by the fiber dynamics (for details see Appendix).
A second problem is the nonhyperbolicity reflected by the coexistence of hyperbolic
measures and, consequently, of hyperbolic periodic points with different behaviour in the central direction. Finally, there are problems related to the existence and regularity of the central foliations.
Restricting our consideration to skew-products allows us to focus on the difficulty arising from the nonhyperbolicity, while escaping  from the latter one.
This approach also allows us to present our constructions (e.g. the multi-variable-time horseshoes and their symbolic extensions) in a transparent way. This strategy also allows us to establish an axiomatic approach, which is in fact completely justified and turns out to reflect quite well the general features of robustly transitive and nonhyperbolic systems.


Our axiomatic approach allows us to study the ergodic theory of step skew-products which mix expanding and contracting fiber dynamics. For instance, in the robustly transitive case, there are ``horseshoes'' which are contracting and expanding in the fiber direction, respectively, and there are also ergodic nonhyperbolic measures~\cite{GorIlyKleNal:05}, even with full support~\cite{BonDiaGor:10} or with positive entropy~\cite{BocBonDia:}. This shows that nonhyperbolic ergodic measures cannot be neglected.
Besides this, there is not much rigorous study of (the set of) points and measures with zero Lyapunov exponent by means of an analysis of the measure space and entropy properties.
For instance, as a consequence of~\cite{AbdBonCro:11} in our setting hyperbolic periodic  measures are dense%
\footnote{Indeed, this is true for an isolated homoclinic class of a $C^1$ diffeomorphism with a dominated splitting.}
and, in particular, every nonhyperbolic  ergodic measure is accumulated by hyperbolic ergodic ones. One of our key improvements is the approximation by entropy.


Our setting and one of our main applications is motivated by the study of
partially hyperbolic robustly transitive diffeomorphisms and minimality of their strong stable and unstable
foliations in~\cite{BonDiaUre:02,RodRodUre:07}. We extract some general principles which we put as a set of axioms. To be more precise, let $\sigma\colon\Sigma_k\to\Sigma_k$, $k\ge2$, be the usual shift map on the space $\Sigma_k=\{0,\ldots,k-1\}^\bZ$ of two-sided sequences. Consider a finite family $f_i\colon \bS^1\to \bS^1$, $i=0,\ldots,k-1$, of $C^1$
diffeomorphisms. Associated to these maps, we consider the step skew-product
\begin{equation}\label{eq:sp}
	F\colon \Sigma_k\times \bS^1\to \Sigma_k\times \bS^1,
	\quad
	F(\xi,x) = \big(\sigma(\xi), f_{\xi_0}(x)\big).
\end{equation}
Seeing the map as an iterated function system  (IFS) associated to the fiber maps $\{f_i\}_{i=0}^{k-1}$, we require that there is some ``expanding region'' (relative to the fiber direction)  and some ``contracting region'' and that any of these regions  ``can be reached'' from any point in the ambient space  under forward and backward iterations. 
More precisely, we say that the map $F$ satisfies Axioms CEC$\pm$ and Acc$\pm$ if  there is some nontrivial closed interval $J\subset\bS^1$ such that:
\begin{description}
\item[CEC$+$] \textbf{(Controlled Expanding forward Covering)}   Existence of some forward iteration of the fiber along which any small enough interval $H$ intersecting $J$ is uniformly expanded and covers $J$ (with uniform control on iteration length and expansion strength which depend on the size of $H$ only).
\item[CEC$-$] \textbf{(Controlled Expanding backward Covering)}
Axiom CEC$+$  for the IFS $\{f_i^{-1}\}$.
\item[Acc$+$] \textbf{(Forward Accessibility)} Forward iterations of $J$ cover $\bS^1$.
\item[Acc$-$]\textbf{(Backward Accessibility)} Axiom Acc$+$ for the IFS $\{f_i^{-1}\}$.
\end{description}
We call such an interval $J\subset\bS^1$  a \emph{blending interval}.

If the map $F$ is transitive and satisfies the axioms then  every sufficiently small interval is a blending interval,
see Section~\ref{sec:axioms} for  details and discussion.
For completeness, recall that $F$ is \emph{transitive} if for any pair of nonempty
 open sets $U,V\subset\Sigma_k\times\bS^1$ there is $n\ge1$ such that $F^n(U)\cap V\ne\emptyset$.

These axioms in particular imply that $F$ is \emph{robustly transitive}, that is, for every family of diffeomorphisms $g_0,\ldots,g_{k-1}$ $C^1$-close enough to $f_0,\ldots,f_{k-1}$ the resulting skew-product map $G$ is transitive and robustly nonhyperbolic (the spectrum of fiber Lyapunov exponents defined below is an interval containing $0$ in its interior).
We also observe that they appear naturally in robustly transitive step skew-products.
 The expanding/contracting regions reflect the co-existence of hyperbolic periodic points with different central behavior and can be identified with a so-called  ``expanding/contracting blender'', while the other properties reflect the minimality of the strong stable and unstable foliations (see Section~\ref{sec:examples}).

We are now ready to state our main result. Let $\cM$ be the space of $F$-invariant probability measures supported in $\Sigma_k\times \bS^1$.
Denote by $\cM_{\rm erg}\subset\cM$ the subset of ergodic measures. Given $\mu\in\cM_{\rm erg}$ denote by $\chi(\mu)$ its \emph{(fiber) Lyapunov exponent} which is given by
\begin{equation*}
	\chi(\mu)
	\eqdef \int\log\,\lvert(f_{\xi_0})'(x)\rvert\,d\mu(\xi,x). 	
\end{equation*}
A measure $\mu$ is called {\emph{nonhyperbolic}} if $\chi(\mu)=0$.
Otherwise the measure is called {\emph{hyperbolic}}.

Given a compact $F$-invariant set $\Gamma\subset\Sigma_k\times\bS^1$, we will denote by $\cM(\Gamma)\subset\cM$ the subset of measures supported in $\Gamma$. We equip this space with the weak$\ast$ topology. We say that $\Gamma$ has {\emph{uniform central expansion}}
(\emph{contraction}) if every ergodic measure $\mu\in \cM(\Gamma)$ has positive (negative) fiber Lyapunov exponent.

 We denote by $h_{\rm top}(F,\Gamma)$ the topological entropy of $F$ on $\Gamma$ and by $h(\mu)$ the \emph{entropy} of a measure $\mu$.

\begin{mtheorem}\label{the:luzzaattoo}
Consider a transitive step skew-product map $F$ as in~\eqref{eq:sp} whose fiber maps are $C^1$.
	Assume that $F$ satisfies Axioms CEC$\pm$ and Acc$\pm$.
	
	Then for every nonhyperbolic measure
	$\mu\in\cM_{\rm erg}$ ($\chi(\mu)=0$) 
	for every $\delta>0$ and every $\gamma>0$ there exist compact $F$-invariant  transitive hyperbolic sets $\Gamma^+$ with uniform central expansion
	and $\Gamma^-$ with uniform central contraction whose topological entropies satisfy
\[	
	h_{\rm top}(F,\Gamma^+),  h_{\rm top}(F,\Gamma^-) \in  [h(\mu)-\gamma, h(\mu)+\gamma].
\]	
Moreover, every measure $\nu^\pm \in\cM(\Gamma^\pm)$  is $\delta$-close to $\mu^\pm$ in the weak$\ast$ topology.

In particular, there are  hyperbolic measures $\nu^+, \nu^-\in\cM_{\rm erg}$ with 
$$
\chi(\nu^+)\in(0,\delta) \quad \text{and} \quad  \chi(\nu^-)\in(-\delta,0)
$$
and
$$
h(\nu^+), h(\nu^-)\in [h(\mu)-\gamma, h(\mu)+\gamma].
$$

If $h(\mu)=0$ then $\Gamma^-$ and $\Gamma^+$ are hyperbolic periodic orbits.
\end{mtheorem}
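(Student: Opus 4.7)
The plan is to adapt Katok's classical horseshoe approximation, with the blender structure furnished by the axioms replacing the Pesin theory that is unavailable at zero exponent. First, since $\mu$ is ergodic with $\chi(\mu)=0$, I combine the Birkhoff theorem with Shannon--McMillan--Breiman, applied to a fine generating partition of $\Sigma_k\times\bS^1$, to obtain for any small $\varepsilon>0$ and all sufficiently large $N$ a collection $\cW_N$ of $(N,\eta)$-separated $\mu$-typical orbit pieces of length $N$ satisfying: (i) $\#\cW_N\ge e^{N(h(\mu)-\varepsilon)}$; (ii) along each piece, the fiber log-derivative sum $\sum_{j<N}\log\lvert f'_{\xi_j}(x_j)\rvert$ has absolute value at most $\varepsilon N$ (this uses $\chi(\mu)=0$); (iii) the empirical measure $\frac{1}{N}\sum_{j<N}\delta_{F^j(\xi,x)}$ is within $\delta/2$ of $\mu$ in the weak$^\ast$ topology.

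Next, using Axioms Acc$+$ and CEC$+$, I append to each $\mu$-typical piece a uniformly bounded correction tail of length $L$ consisting of (a) a bridge that maps the fiber endpoint into the blending interval $J$ (furnished by Acc$+$) and (b) a CEC$+$ expanding iteration that covers $J$ with total fiber log-expansion at least $cL$ for a uniform constant $c>0$. Because every small interval meeting $J$ admits an inverse branch into itself under the correction tail, any bi-infinite concatenation of words from $\cW_N$ interleaved with correction tails is realised by a unique genuine orbit via an iterated-contraction argument on $J$. The closure of these orbits is the desired compact $F$-invariant set $\Gamma^+$, symbolically conjugate to a full shift on $\#\cW_N$ symbols with block length $N+L$, and in particular transitive.

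Three estimates then yield the stated conclusions for $\Gamma^+$. The entropy satisfies $h_{\rm top}(F,\Gamma^+)=\frac{\log\#\cW_N}{N+L}\ge\frac{N}{N+L}(h(\mu)-\varepsilon)$, which lies in $[h(\mu)-\gamma,h(\mu)+\gamma]$ when $\varepsilon$ and $L/N$ are small. The fiber log-expansion accumulated over each block of length $N+L$ is at least $-\varepsilon N+cL$, which is positive once $\varepsilon<cL/N$; a standard cone-field argument promotes this to uniform fiber expansion on $\Gamma^+$, and the per-block Birkhoff average yields $\chi(\nu^+)\in(0,\delta)$ for every ergodic $\nu^+\in\cM(\Gamma^+)$ after choosing $L/N$ small enough. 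Finally, any $\nu^+\in\cM(\Gamma^+)$ is, up to mass $L/(N+L)$ concentrated on correction tails, a convex combination of empirical measures of $\cW_N$-pieces, each within $\delta/2$ of $\mu$, so $\nu^+$ is $\delta$-close to $\mu$ in the weak$^\ast$ topology. The construction of $\Gamma^-$ is completely symmetric, inserting contracting tails built from CEC$-$ and Acc$-$.

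I expect the main obstacle to lie in the second step: promoting the purely combinatorial concatenation to distinct, well-separated genuine orbits forming a hyperbolic horseshoe. Axiom CEC$+$ was tailored precisely for this, since its controlled expanding covering of the blending interval $J$ furnishes a graph-transform operator on families of arcs in $J$ whose unique fixed points realise prescribed symbolic itineraries by actual orbits. Care is also required to arrange the Acc$+$-bridges so that, uniformly in the fiber endpoints of $\cW_N$-pieces, the subsequent CEC$+$-iteration starts from a region where its covering hypothesis applies; this is a compactness argument over the finite family $\cW_N$. Finally, when $h(\mu)=0$, a single typical word replaces $\cW_N$ and the concatenation-with-correction collapses to a single periodic fixed point, producing the claimed hyperbolic periodic orbits $\Gamma^\pm$.
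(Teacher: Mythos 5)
Your overall architecture (typical orbit pieces of length $N$ carrying the entropy and the empirical measure, closed up through the blending interval by an expanding CEC$+$/Acc$+$ tail, then a horseshoe built from the concatenations) is the same as the paper's skeleton/multi-variable-time-horseshoe scheme. But there is a genuine gap at the heart of your step 2: you take the correction tail to be of \emph{uniformly bounded} length $L$, with fiber log-expansion at least $cL$, and you make the per-block expansion positive ``once $\varepsilon<cL/N$''. This is incompatible with your order of quantifiers: $\varepsilon$ is fixed first and $N$ must then be taken large (both for the Shannon--McMillan--Breiman/separation count $\#\cW_N\ge e^{N(h(\mu)-\varepsilon)}$ and for the Birkhoff/weak$\ast$ control), so $cL/N\to 0$ and the required inequality fails. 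Nor can you rescue it by letting $\varepsilon=\varepsilon(N)\sim L/N$: for a measure with $\chi(\mu)=0$ the Birkhoff sums $\sum_{j<N}\log\lvert f_{\xi_j}'(x_j)\rvert$ are $o(N)$ but in general unbounded, so condition (ii) with $\varepsilon N$ replaced by a constant cannot be met on a set of positive measure. The same problem reappears when you actually build the Markov structure: the CEC$+$ covering must be applied not to $J$ itself but to the image $H$ of a small fiber interval around each starting point (small enough for distortion control over $N$ iterates, hence of size roughly $\delta_0e^{-N(\varepsilon+\varepsilon_D)}$), and the covering time guaranteed by the axiom is of order $K_2\lvert\log\lvert H\rvert\rvert+K_3$ --- it cannot be taken uniformly bounded. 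So with a bounded tail your blocks need not be uniformly fiber-expanding, the cone-field/graph-transform step has nothing to feed on, and the conclusion $\chi(\nu^+)>0$ is not obtained.

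The paper's resolution is exactly the quantitative balancing you are missing: the tail length is allowed to grow sublinearly with $N$, namely $\ell\sim K_2\lvert\log\lvert H\rvert\rvert$, and the reference scale $\delta_0$ is chosen exponentially small in $m=N$ but with the intermediate rate $\delta_0<\exp(-m\sqrt{\varepsilon_D+\varepsilon_E})$ (see \eqref{defcond:K1K4delta0} and \eqref{defcond:Kfinal}); then the guaranteed tail expansion $K_5\big(K_2\lvert\log\delta_0\rvert+K_3\big)\gtrsim K_2K_5\,m\sqrt{\varepsilon}$ dominates the possible contraction $\sim m(\varepsilon_E+\varepsilon_D)$ along the typical piece, while $\ell/m=O(\sqrt\varepsilon)$ stays small so that entropy, the upper exponent bound $\chi(\nu^+)<\delta$, and the weak$\ast$ closeness survive. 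Two further points your write-up elides but the paper must (and does) handle: the covering time also needs a lower bound of the same order (Lemma~\ref{lem:warsaw}) so the exponent estimates are two-sided; and the blocks do \emph{not} all have the same length, since $\ell_i$ varies with $i$, so one does not get a full shift on $\#\cW_N$ symbols --- the paper introduces the multi-variable-time horseshoe, selects admissible transitions with equal transition times, and pays a $\log(t_{\max}-t_{\min}+1)$ entropy loss (Proposition~\ref{prop:wilvarhorse}), which is then absorbed because $t_{\max}-t_{\min}=o(m)$. Also note that for the equal-length, separated family $\cW_N$ itself one needs the pigeonhole-over-return-times argument of Proposition~\ref{pro:BriKat}; Birkhoff plus SMB alone does not give pieces that start and end at the blending interval after exactly the same number of iterates.
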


%
%

We will prove Theorem~\ref{the:luzzaattoo} only in the case $h(\mu)>0$. If $h(\mu)=0$ the same proof allows us to construct a periodic orbit (in the place of a
compact, $F$-invariant, hyperbolic, and transitive set $\Gamma^\pm$ with positive entropy,
according to the case) with the required Lyapunov exponent.

Investigating the structure of the space of invariant measures, this theorem can be stated in slightly different terms.
For that recall that the set $\cM$ equipped with the weak$\ast$ topology is a Choquet simplex, the ergodic measures are its extreme points, and any $\mu\in\cM$ has a unique ergodic decomposition (see~\cite[Chapter 6.2]{Wal:82}). In some contexts it can be shown that the set of ergodic measures $\cM_{\rm erg}$ is dense in its closed convex hull $\cM$ (in this case, if $\cM$ is non-trivial, this is called a \emph{Poulsen simplex}). In the general case, $\cM$ does not have such a property. However, in our setting, by~\cite{BonGel:} the subset of ergodic measures with positive fiber Lyapunov exponent (with negative fiber Lyapunov exponent) is indeed a Poulsen simplex. We  investigate further these simplices and study the remaining set of (ergodic) measures with zero fiber  exponent. We consider the decomposition
\begin{equation*}
	\cM_{\rm erg}=\cM_{\rm erg,<0}\cup\cM_{\rm erg,0}\cup\cM_{\rm erg,>0}
\end{equation*}
into ergodic measures with negative, zero, and positive (fiber)  Lyapunov exponent, respectively. We will sometimes also consider the corresponding spaces $\cM_{\rm erg,\le0}$ and $\cM_{\rm erg,\ge0}$.
We consider this decomposition as an important step towards the
study of ergodic theory of general nonhyperbolic systems.

\begin{mcorollary}\label{cor:simplleexx}
	Under the assumptions of Theorem~\ref{the:luzzaattoo}, the  intersection of the closed convex hull of ergodic measures with negative fiber exponent and the closed convex hull of ergodic measures with positive fiber exponent  is non-empty and contains all ergodic measures with zero exponent.
\end{mcorollary}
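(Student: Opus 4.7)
The corollary is an almost immediate consequence of Theorem~\ref{the:luzzaattoo}. The plan is to take any ergodic measure with zero fiber exponent and, by applying the theorem with decreasing $\delta$, place it simultaneously in arbitrarily small weak$\ast$ neighborhoods of measures with strictly positive exponent and of measures with strictly negative exponent, thereby placing it in both closed convex hulls at once.

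More concretely, I would fix an arbitrary $\mu\in\cM_{\rm erg,0}$ and, for each integer $n\ge1$, apply Theorem~\ref{the:luzzaattoo} with $\delta=1/n$ and some fixed $\gamma>0$ to obtain hyperbolic ergodic measures $\nu_n^+\in\cM_{\rm erg,>0}$ and $\nu_n^-\in\cM_{\rm erg,<0}$, both lying in the $1/n$-weak$\ast$-neighborhood of $\mu$. Letting $n\to\infty$ gives $\nu_n^\pm\to\mu$ in the weak$\ast$ topology, so $\mu$ belongs to the weak$\ast$-closure of $\cM_{\rm erg,>0}$ and, simultaneously, to the weak$\ast$-closure of $\cM_{\rm erg,<0}$. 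Since the closed convex hull of a set contains its weak$\ast$ closure, $\mu$ lies in the closed convex hull of $\cM_{\rm erg,>0}$ as well as in the closed convex hull of $\cM_{\rm erg,<0}$, and hence in their intersection. This establishes the second assertion of the corollary: every ergodic measure with zero fiber exponent is contained in both closed convex hulls.

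For the non-emptiness of the intersection it suffices to know that $\cM_{\rm erg,0}\ne\emptyset$ under the present hypotheses, which follows from known existence results for nonhyperbolic ergodic measures in this setting (as cited in the introduction, e.g.~\cite{GorIlyKleNal:05,BonDiaGor:10,BocBonDia:}); any one such measure then certifies non-emptiness via the argument above. The main obstacle is concentrated entirely in Theorem~\ref{the:luzzaattoo}: once simultaneous weak$\ast$ approximation by both positive- and negative-exponent hyperbolic ergodic measures is available, no further construction is needed. In particular, the entropy half of Theorem~\ref{the:luzzaattoo} is not used here, so the corollary is strictly weaker than (and contained in) the theorem.
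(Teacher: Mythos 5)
Your argument is correct and is essentially the paper's own reading of the corollary: Theorem~\ref{the:luzzaattoo} with $\delta=1/n$ puts every $\mu\in\cM_{\rm erg,0}$ in the weak$\ast$ closure of both $\cM_{\rm erg,>0}$ and $\cM_{\rm erg,<0}$, hence in both closed convex hulls, and non-emptiness comes from the known existence of nonhyperbolic ergodic measures in this setting. No further comment is needed.
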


We observe that the axioms guarantee the existence of ``horseshoes" and therefore the map $F$ has positive topological entropy.
We have the following particular variational principle of entropy.

\begin{mtheorem}\label{theocor:varprinc}
Under the assumptions of Theorem~\ref{the:luzzaattoo}, we have
\[
	h_{\rm top}(F)
	= \sup_{\mu\in \cM_{\rm erg, <0}} h(\mu)
	= \sup_{\mu\in \cM_{\rm erg, >0}} h(\mu).
\]
\end{mtheorem}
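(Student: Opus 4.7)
By the classical variational principle, $h_{\rm top}(F)=\sup_{\mu\in\cM_{\rm erg}}h(\mu)$, so the inequalities $\sup_{\mu\in\cM_{\rm erg,\pm}}h(\mu)\le h_{\rm top}(F)$ are automatic. The two reverse inequalities are symmetric under the interchange CEC$+\leftrightarrow$CEC$-$ and Acc$+\leftrightarrow$Acc$-$ (equivalently, replacing the IFS $\{f_i\}$ by $\{f_i^{-1}\}$), so it suffices to establish $\sup_{\mu\in\cM_{\rm erg,>0}} h(\mu)\ge h_{\rm top}(F)$.

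The plan is to exhibit a sequence $(\mu_n)\subset\cM_{\rm erg,0}$ with $h(\mu_n)\to h_{\rm top}(F)$ and then apply Theorem~\ref{the:luzzaattoo} to each $\mu_n$ with $\gamma=1/n$, obtaining ergodic hyperbolic $\nu_n^+\in\cM_{\rm erg,>0}$ with $h(\nu_n^+)\ge h(\mu_n)-1/n$; letting $n\to\infty$ gives the claim. To construct $\mu_n$, start from an ergodic $\mu_0$ with $h(\mu_0)$ close to $h_{\rm top}(F)$ furnished by the classical variational principle. If $\chi(\mu_0)=0$, set $\mu_n=\mu_0$. Otherwise, using the symmetry above, assume $\chi(\mu_0)>0$, and use Axioms CEC$-$ and Acc$-$ to produce a contracting blender-horseshoe $\Lambda^-\subset\Sigma_k\times\bS^1$ accessible from the support of $\mu_0$. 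Following the nonhyperbolicization scheme of~\cite{GorIlyKleNal:05,BonDiaGor:10,BocBonDia:}, adapted to this axiomatic setting, I would concatenate long $\mu_0$-typical orbit segments with rare but progressively longer excursions through $\Lambda^-$, tuning the insertion schedule so that the cumulative fiber exponent tends to $0$ while only $o(1)$ entropy is lost and ergodicity is preserved.

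The main obstacle is the quantitative balancing: the excursions must be sparse enough to keep the entropy loss below $1/n$, yet dense and long enough to drive the positive fiber exponent to $0$, and coordinated so that the limiting measure is ergodic. This balancing rests on the uniform controls built into CEC$\pm$ (bounded return time and bounded covering expansion independent of the initial interval) and on Acc$\pm$ (accessibility of the blending interval $J$ under both time directions)—exactly the quantitative ingredients already deployed in the multi-variable-time horseshoe construction that underpins Theorem~\ref{the:luzzaattoo}, so the same machinery should carry over with modest adjustments.
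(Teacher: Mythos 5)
The center of your argument is the claim that one can ``nonhyperbolicize'' a near-maximal-entropy ergodic measure: produce $\mu_n\in\cM_{\rm erg,0}$ with $h(\mu_n)\to h_{\rm top}(F)$ by inserting sparse excursions through a contracting region into $\mu_0$-typical orbits, losing only $o(1)$ entropy. This step is a genuine gap. It is strictly stronger than the theorem itself (it would give $\sup_{\mu\in\cM_{\rm erg,0}}h(\mu)=h_{\rm top}(F)$, which the paper never claims and which is not known in this setting), and none of the machinery you invoke delivers it: the GIKN-type schemes of~\cite{GorIlyKleNal:05,BonDiaGor:10} produce nonhyperbolic ergodic measures of zero entropy, the refinement in~\cite{BocBonDia:} gives positive entropy but with no control making it close to $h(\mu_0)$, and the paper's own quantitative perturbation result (Theorem~\ref{theo:main3twin}, via Theorem~\ref{teo:mimick}) shows that within this axiomatic framework, moving a measure with exponent $\alpha\neq0$ towards exponent $\beta\approx 0$ only guarantees entropy of order $h(\mu_0)/(1+\Kdois(F)(\beta+\lvert\alpha\rvert))$, i.e.\ a loss by a factor bounded away from $1$ as long as $\lvert\alpha\rvert$ is bounded away from $0$. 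So the ``quantitative balancing'' you defer to is exactly the point where the argument breaks. (There is also a smaller logical slip: having already spent the CEC$+\leftrightarrow$CEC$-$ symmetry to reduce to $\sup_{\cM_{\rm erg,>0}}h\ge h_{\rm top}(F)$, you cannot then ``assume $\chi(\mu_0)>0$ by symmetry''; the hard case for that fixed inequality is $\chi(\mu_0)<0$, and note that $\chi(\mu_0)>0$ would already finish the proof with no construction at all.)

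The paper avoids all of this with a much softer device, the twin-measure Fact~\ref{pro:facttwinn}: approximate a hyperbolic ergodic $\mu_0$ by periodic orbits (Katok/dominated-splitting approximation), and use that a circle diffeomorphism with a contracting periodic point over a given periodic base itinerary also has, over the same itinerary, a periodic point with derivative of modulus at least $1$; since all entropy lives in the base projection, the limit measure $\widetilde\mu$ has $h(\widetilde\mu)=h(\mu_0)$ exactly and exponent of the opposite sign or zero. Theorem~\ref{theocor:varprinc} then follows from the classical variational principle by a three-case analysis on which of $\cM_{\rm erg,<0}$, $\cM_{\rm erg,0}$, $\cM_{\rm erg,>0}$ realizes the supremum, invoking Theorem~\ref{the:luzzaattoo} in the zero-exponent case and the twin fact otherwise. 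If you want to salvage your outline, replace the GIKN-style interpolation by this twin-measure argument; the rest of your reductions (variational principle, symmetry, and the use of Theorem~\ref{the:luzzaattoo} for zero-exponent measures) are fine.
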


%

Observe that the statement of Theorem~\ref{the:luzzaattoo} naturally extends to any invariant measure which is in the closed convex hull of $\cM_{\rm erg,0}$. However, in general there may exist invariant measures with zero  exponent that are not  in this hull and those would not necessarily be approximated by ergodic measures. The existence of such measures so far remains as an open question. If they do exist, then we provide some of their properties in Corollary~\ref{cor:main3twin}.
Compare Figure~\ref{fi.neu} for illustration.

\begin{figure}
\begin{minipage}[h]{\linewidth}
\centering
 \begin{overpic}[scale=.35]{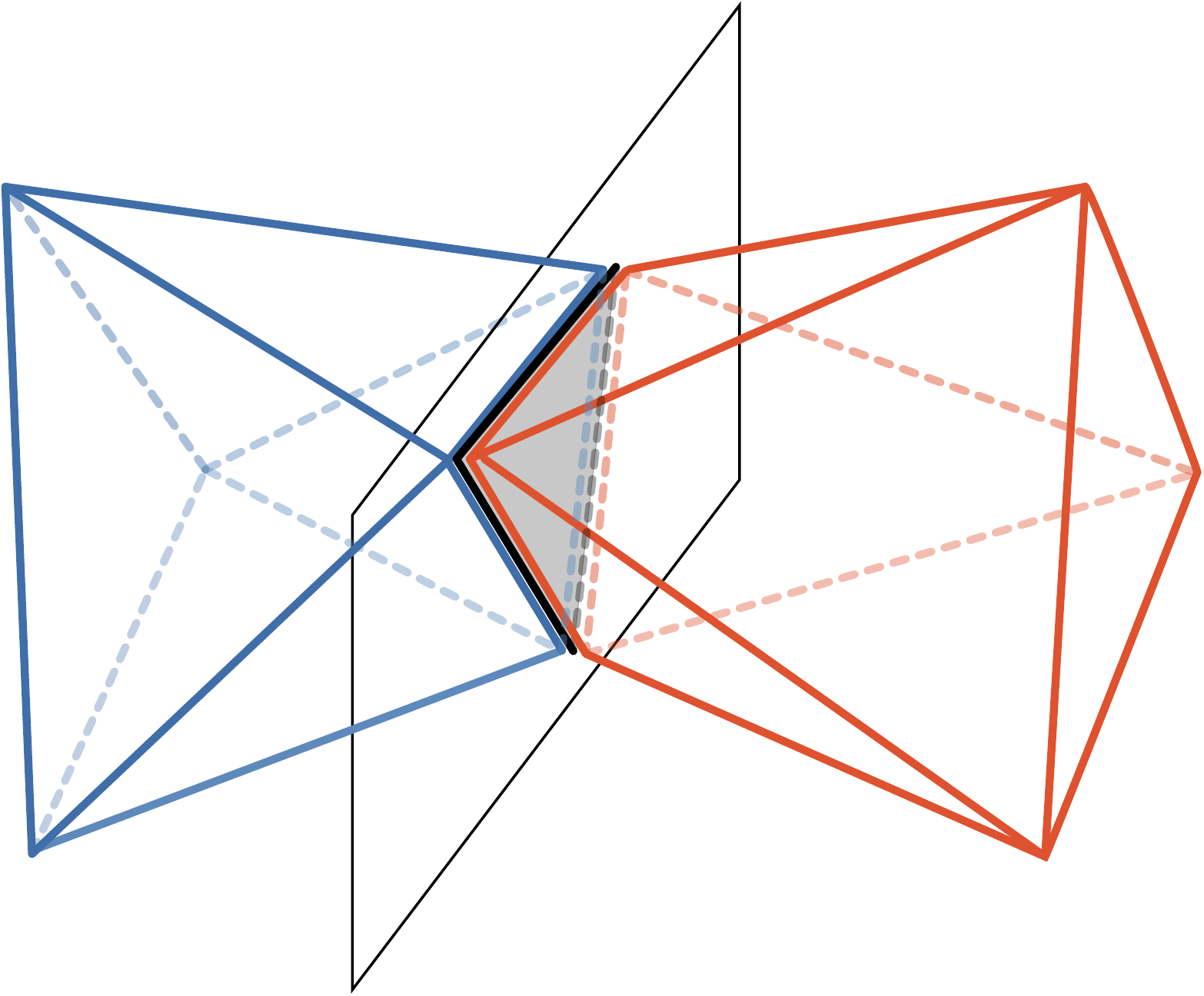}
  			\put(73,72){\small\textcolor{red}{$\clocon(\cM_{\rm erg,>0})$}}
  			\put(3,72){\small\textcolor{blue}{$\clocon(\cM_{\rm erg,<0})$}}
  			\put(32,12){\small\textcolor{black}{$\clocon(\cM_{\rm erg,0})$}}
  \end{overpic}
\caption{Schematic form of simplices of invariant measures}
\label{fi.neu}
\end{minipage}
\end{figure}	

Investigating finer properties of the measure space, we can quickly observe the following  
general ``twin principle'' (the simple proof is given in Section~\ref{sec:twins}).

\begin{mfact}[Twin measures]\label{pro:facttwinn}
	Consider a transitive step skew-product map $F$ as in~\eqref{eq:sp} whose fiber maps are $C^1$.
	
	Then for every $\mu\in\cM$ with $\chi(\mu)<0$ there exists $\widetilde\mu\in\cM$ satisfying $\chi(\widetilde\mu)\ge0$ and $h(\widetilde\mu)=h(\mu)$. If $\mu$ was ergodic, then $\widetilde\mu$ can be chosen ergodic.
\end{mfact}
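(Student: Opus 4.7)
The plan is to construct $\widetilde\mu$ as a Krylov--Bogolyubov time-average of backward push-forwards of a Lebesgue product measure, exploiting two features of the setting: step skew-products with one-dimensional fibers carry no fiber entropy (see Appendix), so every invariant measure has entropy equal to that of its base projection; and Jensen's inequality on the circle fiber forces a key term to have the correct sign. The hypothesis $\chi(\mu)<0$ is not used directly in the construction; it only makes the conclusion nontrivial.

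Let $\nu \eqdef \pi_\ast \mu$ where $\pi\colon \Sigma_k\times\bS^1\to\Sigma_k$ is the base projection, and note that $\nu$ is $\sigma$-invariant. Let $m$ denote normalized Lebesgue measure on $\bS^1$, set $\lambda_0 \eqdef \nu\otimes m$, and form the Cesaro averages $\lambda_N \eqdef \frac{1}{N}\sum_{n=0}^{N-1}F_\ast^{-n}\lambda_0$. Since $\nu$ is $\sigma$-invariant we have $\pi_\ast \lambda_N = \nu$ for every $N$, and a standard Krylov--Bogolyubov argument shows that any weak$\ast$ accumulation point $\widetilde\mu$ of $(\lambda_N)$ is $F$-invariant. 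As $\pi_\ast \widetilde\mu = \nu$ and the fiber contributes no entropy, $h(\widetilde\mu) = h_\sigma(\nu) = h(\mu)$.

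The main obstacle is to check $\chi(\widetilde\mu) \ge 0$. Set $\phi(\xi,x)\eqdef\log|f_{\xi_0}'(x)|$ and define $G_n \eqdef f_{\xi_{-1}}\circ f_{\xi_{-2}}\circ\cdots\circ f_{\xi_{-n}}$, a circle diffeomorphism (depending on $\xi$) with $\int_{\bS^1}|G_n'|\,dm = 1$. A direct chain-rule telescoping yields
\[
  \sum_{n=0}^{N-1}\phi\circ F^{-n}(\xi,x)
  \;=\; \log|f_{\xi_0}'(x)| \,+\, \log\bigl|G_{N-1}'(G_{N-1}^{-1}(x))\bigr|.
\]
Integrating against $\lambda_0 = \nu\otimes m$: the first summand is bounded uniformly in $N$; for the second, the substitution $u = G_{N-1}^{-1}(x)$ in the fiber integral produces $\int_{\bS^1}|G_{N-1}'(u)|\log|G_{N-1}'(u)|\,dm(u)$, which is nonnegative by Jensen's inequality applied to the convex function $t\mapsto t\log t$ together with the identity $\int|G_{N-1}'|\,dm = 1$. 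Dividing by $N$ and letting $N\to\infty$ yields $\chi(\widetilde\mu)\ge 0$.

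For the ergodic statement, if $\mu$ is ergodic then so is $\nu$. In the ergodic decomposition of $\widetilde\mu$, each $F$-ergodic component projects to a $\sigma$-ergodic measure, and these projections average to $\nu$; by extremality of $\sigma$-ergodic measures, every such projection equals $\nu$, so every component has entropy $h_\sigma(\nu) = h(\mu)$. Since the average of $\chi$ over the decomposition equals $\chi(\widetilde\mu)\ge 0$, at least one ergodic component has $\chi\ge 0$, furnishing the required ergodic $\widetilde\mu$.
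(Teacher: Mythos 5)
Your proof is correct, and it takes a genuinely different route from the paper. The paper first invokes the Katok-type approximation for $C^1$ systems with a dominated splitting (citing Crovisier) to write $\mu$ as a weak$\ast$ limit of measures on fiber-contracting periodic orbits; then, since a circle diffeomorphism with an attracting fixed point must have another fixed point with derivative at least one, each such periodic orbit has a ``twin'' periodic orbit over the same base itinerary with nonnegative finite-time exponent, and $\widetilde\mu$ is obtained as a weak$\ast$ limit of the twin periodic measures, with the entropy identification again coming from the projection formula~\eqref{eq:entropyproj}. Your construction replaces this machinery by an elementary averaging argument: you push $\nu\otimes\Leb$ backwards, telescope the Birkhoff sums of $\log\lvert f_{\xi_0}'\rvert$ into a single derivative $\log\lvert G_{N-1}'\circ G_{N-1}^{-1}\rvert$, and use the change of variables plus Jensen applied to $t\mapsto t\log t$ with $\int\lvert G_{N-1}'\rvert\,d\Leb=1$ to force $\chi(\widetilde\mu)\ge0$; the entropy and ergodic-component arguments (via $\varpi_\ast\widetilde\mu=\varpi_\ast\mu$, zero fiber entropy, and extremality of the ergodic base measure) coincide in substance with the paper's. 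What your approach buys is self-containedness and generality: no Pesin/Katok-type theorem is needed, no hyperbolicity of $\mu$ is used, and you get the exact identity $\varpi_\ast\widetilde\mu=\varpi_\ast\mu$ directly. What the paper's approach buys is structural information consistent with the rest of the text: the twin measure arises as a limit of hyperbolic periodic-orbit measures sharing the base itineraries of orbits approximating $\mu$, which fits the periodic-orbit/horseshoe viewpoint used throughout; neither argument, as the paper notes, controls the actual value of $\chi(\widetilde\mu)$ beyond nonnegativity.
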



Note that the construction in the proof of the above fact does not provide any information about the value of the exponent of the \emph{twin measure} $\widetilde\mu$.
One might be tempted to prove a ``perfect twin'' in the sense that to each hyperbolic ergodic positive entropy measure there is an ergodic measure with  equal entropy and negative fiber Lyapunov exponent.
As a first attempt, we can establish the following relation. We can ``push'' entropy of negative exponent measure to ``the other side'', though some amount of entropy and exponent is lost in the construction and this amount may increase the further away we are from zero exponent measures.

For the next -- more quantitative -- result we need to be a bit more precise. Assuming that $F$ is transitive and satisfies Axioms CEC$\pm$ and Acc$\pm$, by
Lemma~\ref{lem:commonintJ} below every closed sufficiently small  interval is a blending interval $J$. In order to be slightly more precise let us announce one of the properties required for Axiom CEC$+(J)$: there are constants $K_2,K_4$ so that for every sufficiently small interval $H\subset\bS^1$ intersecting $J$  there exists a finite sequence $(\eta_0\ldots\eta_{\ell-1})$ for some positive integer $\ell\sim  K_2\,\lvert\log\,\lvert H\rvert\rvert$ such that
\[
	f_{\eta_{\ell-1}}\circ \cdots \circ f_{\eta_0} (H)=
	f_{[\eta_0\ldots\,\eta_{\ell-1}]}(H)\supset B(J,K_4),
\]	
where $B(J,K_4)$ denotes the $K_4$-neighborhood of $J$.
Given a blending interval $J\subset\bS^1$, let $K_2(J)$ be the smallest number having this property for the interval $J$. Define
\[
	\Kdois(F)
	\eqdef \inf\{\Kdois(J)\colon J\text{ blending interval}\}.
\]
This number is intimately related with the inverse Lyapunov exponents. However, in general it might be much bigger.
One task, in particular in view of the estimates in Theorem~\ref{theo:main3twin}, is to minimize this number. We conjecture that in some important cases this number is equal to the inverse of the maximal fiber Lyapunov exponent, namely,
\begin{equation}\label{eq:Kdois}
	\Kdois(F)\eqdef
	\frac{1}{\overline\chi(F)},
	\quad\text{ where }\quad
	\overline\chi(F)
	\eqdef\max\{\chi(\mu)\colon\mu\in\cM_{\rm erg}\}.
\end{equation}
At this point we can only get the following
 natural lower bound:
 $$
 \Kdois(F)^{-1}\ge \log \,\lVert F\rVert
 $$
where
\begin{equation}\label{def:cunifconst}
		\D
		\eqdef \max_{i=0,\ldots,k-1}\,\max_{x\in\bS^1}
		\max\big\{\lvert f_i'(x)\rvert,\lvert(f^{-1}_i)'(x)\rvert\big\}.
	\end{equation}

\begin{mtheorem}\label{theo:main3twin}
Consider a transitive skew-product map $F$ as in~\eqref{eq:sp} whose fiber maps are $C^1$. Assume that $F$ satisfies Axioms CEC$\pm$ and Acc$\pm$.

	Then for every $\mu\in \cM_{\rm erg}$ with $\alpha= \chi(\mu) <0$  for every $\delta>0$ and $\gamma>0$, for every $\beta>0$,
	there is a compact $F$-invariant topologically transitive hyperbolic set $\widehat\Gamma$ such that
\begin{itemize}
\item[1.] its topological entropy satisfies
	\[
		h_{\rm top}(F,\widehat\Gamma)
		\ge \frac{h(\mu)}{1+\Kdois(F)(\beta+\lvert\alpha\rvert) } - \gamma,
	\]
\item[2.] for every $\nu\in \cM_{\rm erg}(\widehat\Gamma)$ we have
	\[
		 \frac \beta {1 + \Kdois(F)(\beta+\lvert\alpha\rvert)}  - \delta
		 < \chi(\nu) <
		 \frac \beta {1+\frac{1}{\log \D}(\beta+\lvert\alpha\rvert)}+ \delta,
	\]		
\item[3.] for every $\nu\in\cM(\widehat\Gamma)$ we have
	\[
		d(\nu,\mu)
		<\frac{\Kdois(F)(\beta+\lvert \alpha\rvert)}
			{1+\Kdois(F)(\beta+\lvert \alpha\rvert)}
		+\delta,
	\]	
	where $d$ is a metric which generates the weak$\ast$ topology.
\end{itemize}
The same conclusion is true for $\alpha > 0$ and every $\beta < 0$, changing in the assertion $\beta+|\alpha|$ to $|\beta|+\alpha$.

If $h(\mu)=0$ then $\widehat\Gamma$ is a hyperbolic periodic orbit.
\end{mtheorem}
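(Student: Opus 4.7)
The plan is to prove the $\alpha<0$ case; the $\alpha>0$ case follows by applying the same argument to $F^{-1}$, for which Axiom CEC$+$ is replaced by CEC$-$ and vice versa. The construction concatenates a large ``contracting horseshoe block'' approximating $\mu$ with a short ``expanding bridge'' provided by Axiom CEC$+$, choosing the length of the bridge so that the resulting total fiber exponent becomes $\beta$.

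First I would apply a Katok-type approximation (whose $C^1$ version for skew-products with dominated splitting is available in the literature and is close in spirit to the multi-variable-time horseshoes constructed elsewhere in this paper) to the hyperbolic measure $\mu$ to obtain a topologically transitive compact $F$-invariant hyperbolic set $\Gamma_0$ with uniform fiber contraction satisfying $h_{\rm top}(F,\Gamma_0)\ge h(\mu)-\gamma'$ and $d(\nu,\mu)<\delta'$ for every $\nu\in\cM(\Gamma_0)$, with $\gamma',\delta'$ chosen small. Coding $\Gamma_0$ by blocks of some large length $N$, each block contracts the fiber at rate close to $\alpha$. Next I would fix a blending interval $J$ realizing $\Kdois(F)$ up to $\gamma'$; by Axiom CEC$+$, for each $\Gamma_0$-block there exist finite fiber words of length
\[
\ell\approx \Kdois(F)(|\alpha|+\beta)N
\]
producing a covering of $J$ with total fiber log-derivative at least $(|\alpha|+\beta)N$, and of course at most $\ell\log\D$. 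Concatenating each $\Gamma_0$-block with such a bridge and using Axioms Acc$\pm$ to reinject the bridge's endpoints into the horseshoe's basic domain, I obtain a topologically transitive $F$-invariant hyperbolic set $\widehat\Gamma$ coded by infinite concatenations of blocks of total length $T:=N+\ell$.

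The three estimates follow directly from this block structure. Entropy is generated only on the $\Gamma_0$-portion, so
\[
h_{\rm top}(F,\widehat\Gamma)\ge \tfrac{N}{T}(h(\mu)-\gamma')=\tfrac{h(\mu)-\gamma'}{1+\Kdois(F)(|\alpha|+\beta)},
\]
giving Item~1 for $\gamma'$ small. The fiber exponent of any ergodic $\nu\in\cM(\widehat\Gamma)$ decomposes as $(N/T)\bar\alpha+(\ell/T)\bar c$, where $\bar\alpha\approx\alpha$ and $\bar c$ is the bridge's average log-derivative; by the design of CEC$+$ one has $\bar c\ge(|\alpha|+\beta)N/\ell\approx 1/\Kdois(F)$, while trivially $\bar c\le\log\D$. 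Plugging in the lower extreme (with $\ell\approx\Kdois(F)(|\alpha|+\beta)N$) gives the lower bound $\beta/(1+\Kdois(F)(|\alpha|+\beta))$ of Item~2, and plugging in the upper extreme (with the corresponding minimal bridge length $\ell^\ast\approx(|\alpha|+\beta)N/\log\D$) gives the upper bound $\beta/(1+(|\alpha|+\beta)/\log\D)$. For Item~3, over a $T$-block the measure $\nu$ decomposes with weights $N/T$ and $\ell/T$ between a piece close to $\cM(\Gamma_0)$ (distance $<\delta'$ to $\mu$) and a piece on the bridge (distance $\le 1$ in any bounded weak$\ast$ metric), hence
\[
d(\nu,\mu)\le \tfrac{N}{T}\delta'+\tfrac{\ell}{T}\le \delta'+\tfrac{\Kdois(F)(|\alpha|+\beta)}{1+\Kdois(F)(|\alpha|+\beta)}.
\]

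The main obstacle I expect is the geometric compatibility of the concatenation: the expanding bridge must begin in the ``docking zone'' near $J$ (so that CEC$+$ can be triggered) and end back in the horseshoe's basic domain (so that $\widehat\Gamma$ is genuinely $F$-invariant and transitive), and a uniform length $\ell$ together with the corresponding bridge symbols must work for all $\Gamma_0$-blocks simultaneously. This is precisely where Axioms CEC$+$ and Acc$\pm$ are used together, with $J$ serving as the meeting point; the uniformity in $\ell$ is where the precise constant $\Kdois(F)$ enters and it is what dictates the asymptotic shape of the estimates. The case $h(\mu)=0$ is treated identically, taking $\Gamma_0$ to be a contracting periodic orbit of long period approximating $\mu$; the resulting $\widehat\Gamma$ is then a single hyperbolic periodic orbit satisfying the stated estimates.
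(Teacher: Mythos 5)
Your proposal is essentially correct and runs on the same quantitative mechanism as the paper's proof: Theorem~\ref{theo:main3twin} is deduced from Theorem~\ref{teo:mimick}, whose construction is exactly your scheme of long orbit blocks approximating $\mu$ concatenated with CEC$+$ bridges, and the three estimates come from the same ratio computations $N/(N+\ell)$ with $\ell$ squeezed between $(\beta+\lvert\alpha\rvert)N/\log\D$ and $K_2(\beta+\lvert\alpha\rvert)N$. The differences are in how the ingredients are produced. First, the paper does not invoke an external Katok-type theorem: the blocks come directly from the measure via the Skeleton$\ast$ property (Proposition~\ref{prop:lunch1}, built on Proposition~\ref{pro:BriKat}, i.e.\ Brin--Katok plus Birkhoff plus Egorov), which already packages the separated itineraries, the finite-time exponent bounds, the Birkhoff-average control, and the connecting words into and out of $J$; your $\Gamma_0$-route is legitimate (and could even be run with the paper's own Theorem~\ref{teo:PES2backward}), but it adds the extra step of passing from measures on $\Gamma_0$ to uniform finite-time estimates on $N$-blocks. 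Second, you posit a \emph{uniform} bridge length $\ell$ valid for all blocks; the paper cannot arrange uniform transition times and this is precisely why it develops the multi-variable-time horseshoe (Proposition~\ref{prop:wilvarhorse}), whose admissibility choice~\eqref{eq:transitions} and pigeonhole estimate absorb the variation of $t_{ij}$ at the cost of a negligible $\log(t_{\rm max}-t_{\rm min}+1)$ entropy term; your sketch should either use that device or justify a padding argument. Third, you state but do not derive the key numerical input: the paper forces both the bridge length and its total expansion by the explicit choice $I_i'=B(x_i',e^{-m\beta})$, whose image under the contracting block has size $\approx e^{-m(\beta+\lvert\alpha\rvert)}$, and it needs the distortion results (Corollary~\ref{cor:distortionC1} and Lemma~\ref{lem:disaxicec}, the latter using Acc$-$) to upgrade the mean-value-theorem estimate at one point to derivative bounds at \emph{every} point of the rectangles, which is what makes the exponent bounds hold for all ergodic measures on $\widehat\Gamma$. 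None of these omissions is a wrong turn --- they are exactly the technical content of Sections~\ref{sec:generaltoos} and~\ref{sec:mulvarhor} --- so your outline is a faithful, slightly coarser version of the paper's argument.
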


As for Theorem~\ref{the:luzzaattoo}, we will prove Theorem~\ref{theo:main3twin} only in the case $h(\mu)>0$. 

Theorems~\ref{the:luzzaattoo} and~\ref{theo:main3twin} have the following immediate ``twin property'' 
corollary for nonhyperbolic measures.

\begin{mcorollary}\label{cor:main3twin}
Under the assumptions of Theorem~\ref{the:luzzaattoo},
for every  $\mu\in\cM$ with $\chi(\mu)=0$ for which there is a  sequence $(\mu^-_n)_n\subset\cM_{\rm erg,<0}$ which converges to $\mu$ in the weak$\ast$ topology 
there is
also a sequence $(\mu^+_n)_n\subset\cM_{\rm erg,>0}$ which converges to $\mu$ in the weak$\ast$ topology and satisfies $\lim_nh(\mu^+_n)=\lim_nh(\mu^-_n)$.
\end{mcorollary}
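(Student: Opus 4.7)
The plan is, for each $n$, to feed $\mu_n^-$ into Theorem~\ref{theo:main3twin} to produce a compact, transitive, hyperbolic set $\widehat\Gamma_n$ whose ergodic measures all have small positive fiber exponent and are close to $\mu_n^-$, and then to select an ergodic measure $\mu_n^+$ inside $\widehat\Gamma_n$ whose entropy closely matches $h(\mu_n^-)$. Since the fiber exponent is the integral of the continuous function $\log\lvert(f_{\xi_0})'(x)\rvert$, weak$\ast$-convergence $\mu_n^-\to\mu$ forces $\alpha_n\eqdef\chi(\mu_n^-)\to\chi(\mu)=0$; this smallness of $\lvert\alpha_n\rvert$ is what will make the three estimates of Theorem~\ref{theo:main3twin} all collapse to zero.

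Concretely, I apply Theorem~\ref{theo:main3twin} to $\mu_n^-$ with parameters $\beta_n,\delta_n,\gamma_n>0$ all tending to zero and chosen so that
\[
\delta_n \;<\; \frac{\beta_n}{2\bigl(1+\Kdois(F)(\beta_n+\lvert\alpha_n\rvert)\bigr)},
\]
for instance $\beta_n=\max\{\lvert\alpha_n\rvert,1/n\}$ and $\delta_n=\gamma_n=\beta_n^2$. This simultaneously guarantees that the lower bound in point~2 for $\chi(\nu)$ is strictly positive (so every ergodic measure on $\widehat\Gamma_n$ has positive fiber exponent), that its upper bound and the distance bound of point~3 both tend to $0$, and that the entropy estimate of point~1 reads $h_{\rm top}(F,\widehat\Gamma_n)\ge h(\mu_n^-)-\varepsilon_n$ with $\varepsilon_n\to 0$.

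Because $\widehat\Gamma_n$ is a compact, $F$-invariant, topologically transitive, hyperbolic set, $F|_{\widehat\Gamma_n}$ has the specification property; by Pfister--Sullivan entropy-density, the set of values of $h(\nu)$ for $\nu\in\cM_{\rm erg}(\widehat\Gamma_n)$ is dense in $[0,h_{\rm top}(F,\widehat\Gamma_n)]$. I pick $\mu_n^+\in\cM_{\rm erg}(\widehat\Gamma_n)$ with
\[
\bigl\lvert\,h(\mu_n^+)-\min\{h(\mu_n^-),\,h_{\rm top}(F,\widehat\Gamma_n)\}\,\bigr\rvert<\tfrac{1}{n}.
\]
Combined with the lower bound $h_{\rm top}(F,\widehat\Gamma_n)\ge h(\mu_n^-)-\varepsilon_n$, this yields $\lvert h(\mu_n^+)-h(\mu_n^-)\rvert\to 0$, and hence $\lim_n h(\mu_n^+)=\lim_n h(\mu_n^-)$ whenever either limit exists. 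The distance estimate of point~3 gives $d(\mu_n^+,\mu_n^-)\to 0$, so $\mu_n^+\to\mu$ in the weak$\ast$ topology, and $\chi(\mu_n^+)>0$ holds by the positivity of the lower bound in point~2. When $h(\mu_n^-)=0$, the final clause of Theorem~\ref{theo:main3twin} delivers a hyperbolic periodic orbit, whose orbit measure serves as $\mu_n^+$ with entropy $0$.

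I expect the main difficulty to lie in orchestrating the three bounds of Theorem~\ref{theo:main3twin} so that they all shrink at compatible rates while keeping the lower bound on $\chi(\nu)$ nontrivial. The explicit choice of $\beta_n$ dominating $\lvert\alpha_n\rvert$, and of $\delta_n$ as a small fraction of $\beta_n$, decouples these constraints; the freedom to pick $\beta_n$ is exactly what makes the construction work.
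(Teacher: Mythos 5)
Your overall strategy is the intended ``immediate'' deduction: apply Theorem~\ref{theo:main3twin} to each $\mu_n^-$, use weak$\ast$ continuity of the exponent to get $\alpha_n=\chi(\mu_n^-)\to0$, and let $\beta_n,\delta_n,\gamma_n\to0$ with $\delta_n\ll\beta_n$ so that item~2 forces every ergodic measure on $\widehat\Gamma_n$ to have (small) positive exponent, item~3 forces weak$\ast$ closeness to $\mu_n^-$ and hence to $\mu$, and item~1 gives $h_{\rm top}(F,\widehat\Gamma_n)\ge h(\mu_n^-)-\varepsilon_n$. All of that is correct. The gap is in the step where you pick $\mu_n^+\in\cM_{\rm erg}(\widehat\Gamma_n)$ with entropy within $1/n$ of $\min\{h(\mu_n^-),h_{\rm top}(F,\widehat\Gamma_n)\}$. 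You justify this by asserting that a compact, transitive, hyperbolic set has the specification property and then invoking Pfister--Sullivan entropy density. Transitivity does not give specification (mixing is needed), and, more importantly, from the bare statement of Theorem~\ref{theo:main3twin} the set $\widehat\Gamma_n$ is only a compact transitive hyperbolic set whose topological entropy is bounded from \emph{below}: its base could a priori be any transitive subshift, and a transitive subshift can even be uniquely ergodic with positive entropy, in which case the only ergodic entropy value available on $\widehat\Gamma_n$ is $h_{\rm top}(F,\widehat\Gamma_n)$, which the theorem does not bound from above by anything near $h(\mu_n^-)$. As written, you therefore cannot exclude that every ergodic measure on $\widehat\Gamma_n$ has entropy bounded away from $\lim_n h(\mu_n^-)$ from above, and the equality of the limits does not follow.

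The repair uses information from the construction rather than from the statement. The sets $\widehat\Gamma_n$ produced in the proof of Theorem~\ref{teo:mimick} are multi-variable-time horseshoes coded by a transitive subshift of finite type, and their entropy is also controlled from above: after discarding skeleton points as in~\eqref{eq:saccoentropy} one has $M\le L_0e^{m(h(\mu_n^-)+\varepsilon_H)}$, while $t_{\rm min}\ge m+1$, so Proposition~\ref{prop:wilvarhorse} gives $h_{\rm top}(F,\widehat\Gamma_n)\le \frac{1}{m+1}\log M\le h(\mu_n^-)+\varepsilon_H+O(1/m)$ (this two-sided control is exactly what is made explicit in Theorem~\ref{teo:PES1}). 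With this upper bound it suffices to take $\mu_n^+$ ergodic with $h(\mu_n^+)\ge h_{\rm top}(F,\widehat\Gamma_n)-1/n$, and then $\lvert h(\mu_n^+)-h(\mu_n^-)\rvert\to0$. Alternatively, the subshift-of-finite-type structure of the horseshoe, together with the fact that the fibers carry no entropy (see~\eqref{eq:entropyproj}), does yield the entropy density inside $\widehat\Gamma_n$ that you wanted; but that argument, not specification, is what has to be cited. With either repair your proof goes through, including the $h(\mu_n^-)=0$ case via the periodic-orbit clause.
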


Let us now describe the organization and the essential ingredients of this paper. First, we state and investigate the above mentioned set of axioms, see Section~\ref{sec:axioms}, which are completely justified, see the examples and discussion in Section~\ref{sec:examples}. 
To deal with nonhyperbolic measures, we will require some very general distortion results which are give in Section~\ref{sec:generaltoos}. Our constructions are essentially based on so-called \emph{skeletons} for the dynamics which are orbit pieces that on one hand approximate well entropy, Lyapunov exponents, and measures and, on the other hand, are connected with a given reference blending interval provided by the axioms, see Section~\ref{sec:skeletons}.  Such skeletons allow us to construct hyperbolic sets ``around them'', for this we introduce the so-called \emph{multi-variable-time horseshoes} generalizing an idea in~\cite{LuzSan:13}, this
will be done in Section~\ref{sec:mulvarhor}. Thereafter in Section~\ref{sec:proofmainprops} we will construct explicit multi-variable-time horseshoes in our setting. Finally, 
Theorems~\ref{the:luzzaattoo},~~\ref{theocor:varprinc}, and~\ref{theo:main3twin}, and
Fact~\ref{pro:facttwinn}  are proved in Section~\ref{sec:mainproofs}.

\section{nonhyperbolic setting}\label{sec:axioms}

\subsection{Standing notation}

We equip the shift space $\Sigma_k$ with the standard metric $d_1(\xi,\eta)=2^{-n(\xi,\eta)}$, where $n(\xi,\eta)=\sup\{\lvert \ell\rvert\colon \xi_{ i}=\eta_{i}\text{ for }i=-\ell,\ldots,\ell\}$. We equip $\Sigma_k\times\bS^1$   with the metric $d((\xi,x),(\eta,y))=\sup\{d_1(\xi,\eta),\lvert x-y\rvert\}$.

The step skew-product structure of $F$ allows us to reduce the study of its dynamics to the study of the IFS generated by the family of maps $\{f_i\}_{i=0}^{k-1}$.
We use the following notations.
Every sequence $\xi=(\ldots\xi_{-1}.\xi_0\xi_1\ldots)\in\Sigma_k$ is given by $\xi=\xi^-.\xi^+$, where $\xi^+\in\Sigma_k^+\eqdef\{0,\ldots,k-1\}^{\bN_0}$ and $\xi^-\in\Sigma_k^-\eqdef\{0,\ldots,k-1\}^{-\bN}$.
Given  \emph{finite} sequences $(\xi_0\ldots \xi_n)$ and  $(\xi_{-m}\ldots\xi_{-1})$, we let
\[\begin{split}
    f_{[\xi_0\ldots\,\xi_n]}
    &\eqdef f_{\xi_n} \circ \cdots \circ f_{\xi_1}\circ f_{\xi_0}
    \quad\text{ and }\quad\\
    f_{[\xi_{-m}\ldots\,\xi_{-1}.]}
    &\eqdef  (f_{\xi_{-1}}\circ\ldots\circ f_{\xi_{-m}})^{-1}
    =(f_{[\xi_{-m}\ldots\,\xi_{-1}]})^{-1}.
\end{split}\]
For $n\ge0$, for notational convenience, we sometimes also write
\[
	f^n_\xi\eqdef f_{[\xi_0\ldots\,\xi_{n-1}]}
	\quad\text{ and }\quad
	f^{-m}_\xi\eqdef f_{[\xi_{-m}\ldots\,\xi_{-1}.]}.
\]	

We will study (fiber) Lyapunov exponents of $F$. They correspond to the Lyapunov exponents of the associated IFS defined as follows: given $X=(\xi,x)\in \Sigma_k\times\bS^1$ let
\[
	\chi(X)
	\eqdef
	 \lim_{n\to\pm\infty}\frac{1}{ n}\log\,\lvert (f^n_\xi)'(x)\lvert
\]
and in this definition we assume that both limits $n\to\infty$ and $n\to-\infty$ exist and that they are equal. Note that in our context the Lyapunov exponent is nothing but the Birkhoff average of a continuous function.

\subsection{Axioms}

Consider  fiber maps $f_0,\ldots,f_{k-1}\colon\bS^1\to\bS^1$
and its associated skew-product map $F$ defined as in~\eqref{eq:sp}.
We now introduce the properties satisfied by the associated IFS $\{f_i\}$.

Given a point $x\in\bS^1$, define its \emph{forward} and 
\emph{backward orbits} under the IFS  by
\[
	\mathcal{O}^+(x)	
	\eqdef \bigcup_{n\ge 0}
	\bigcup_{(\theta_0\ldots\theta_{n-1})}f_{[\theta_0\ldots\,\theta_{n-1}]}(x)
	\,\,\mbox{and} \,\,
	\mathcal{O}^{-}(x) \eqdef  \bigcup_{m\le 1}
	\bigcup_{(\theta_{-m}\ldots\theta_{-1})}f_{[\theta_{-m}\ldots\,\theta_{-1}.]}(x),
\]
respectively. Consider also the \emph{full orbit}
$$
	\mathcal{O}(x) \eqdef \mathcal{O}^+(x) \cup \mathcal{O}^-(x).
$$
Similarly, we define the orbits $\mathcal{O}^+(J),\mathcal{O}^-(J),\mathcal{O}(J)$ for any subset $J\subset\bS^1$.

The first axiom is very natural and is the corner stone of our constructions.

\medskip\noindent\textbf{Axiom T (Transitivity).}
There is a point  $x\in\bS^1$ such that the sets
 $\mathcal{O}^+(x)$ and
 $\mathcal{O}^-(x)$ are both dense in $\bS^1$.

\medskip
The next two axioms refer to the existence of intervals where appropriate compositions of the IFS $\{f_i\}$ have expanding and
contracting behavior.

\medskip\noindent\textbf{Axiom CEC+($J$) (Controlled Expanding forward Covering relative to a closed interval $J\subset\bS^1$).}
We say that the IFS $\{f_i\}$ satisfies CEC+($J$) if there exist positive constants $K_1,\ldots,K_5$ such that for every interval $H\subset\bS^1$ intersecting $J$ and satisfying $\lvert H\rvert<K_1$ we have
\begin{itemize}
\item  (controlled covering)  there exists a finite sequence $(\eta_0\ldots\eta_{\ell-1})$ for some positive integer $\ell\le  K_2\,\lvert\log\,\lvert H\rvert\rvert +K_3$ such that
\[
	f_{[\eta_0\ldots\,\eta_{\ell-1}]}(H)\supset B(J,K_4),
\]	
\item  (controlled expansion) for every $x\in H$ we have
\[
	\log \,\lvert  (f_{[\eta_0\ldots\,\eta_{\ell-1}]})'(x)\rvert\ge \ell K_5.
\]	
\end{itemize}

\medskip\noindent\textbf{Axiom CEC$-(J$) (Controlled Expanding backward Covering
relative to a closed interval $J\subset\bS^1$).} We say that the IFS $\{f_i\}$ satisfies CEC$-(J)$
if the IFS $\{f_i^{-1}\}$ satisfies the Axiom CEC$+(J$).
\smallskip


We observe that although Axioms CEC$\pm$($J$) do not provide an explicit lower bound for $\ell$, such a bound is obtained in Lemma~\ref{lem:warsaw}  at the end of this section.

\medskip

Finally, the last two axioms refer to covering properties of the IFS $\{f_i\}$.  Note that Axiom T implies immediately that for any nontrivial interval  $J\subset\bS^1$ one has that the sets $\mathcal{O}^+(J)$ and $\mathcal{O}^-(J)$ are both dense. We require a slightly stronger property.

\medskip\noindent\textbf{Axiom Acc$+$($J$) (Forward Accessibility relative to a closed interval $J$).}
We say that the IFS $\{f_i\}$ satisfies Acc$+(J$) if $\mathcal{O}^+(\interior J)
	=\bS^1$.

\medskip\noindent\textbf{Axiom Acc$-$($J$) (Backward Accessibility relative to a closed interval $J$).}
We say that the IFS $\{f_i\}$ satisfies Acc$-(J$) if $\mathcal{O}^-(\interior J)
	=\bS^1$.
\smallskip


We note that the IFS we consider has invertible fiber maps and hence has a naturally associated IFS generated by these inverse maps. By this correspondence, Axiom CEC$\pm(J)$ turns into CEC$\mp(J)$ and Axiom Acc$\pm(J)$ turns into Acc$\mp(J)$, respectively.

In the remainder of this paper we will mostly switch back to the point of view of the associated step skew-product $F$ and will say that $F$ satisfies the above axioms if the IFS does.
It follows from a standard genericity argument that if $F$ is transitive then there is a residual subset of $\Sigma_k\times \bS^1$ consisting of points having simultaneously forward and backward dense orbits. Having this in mind,
Axiom T is nothing but  transitivity of $F$.

We close this section with some simple consequences of the axioms above that we will use throughout the paper. The next remark follows straightforwardly from the definition
of the Axioms CEC$\pm$ and Acc$\pm$.

\begin{remark}
Assume that Axiom CEC$+(J)$ holds for some interval $J$. Then for any closed subinterval $I$ of $J$ Axiom CEC$+(I)$ holds with the same constants. The same assertion holds for Axiom CEC$-(J)$.
	
Assume Axiom CEC$+$($J$) and  Axiom Acc(+$J$) hold for some interval $J$. Then for any subinterval $I$ of $J$ Axiom Acc$+(I$) also holds. The same assertion holds for Axiom Acc$-(J$) with Axiom CEC$-(J)$.
\end{remark}

We state first an immediate consequence of compactness of $\bS^1$.

\begin{lemma}\label{lem:idiota}
	Assume that there is a closed interval $J\subset\bS^1$ such that the IFS $\{f_i\}$ satisfies Acc$+(J)$ (satisfies Acc$-(J)$).
	
	Then there exists a number $m_{\rm f}\ge1$ (a number $m_{\rm b}\ge1$) depending only on $J$ such that for every $x\in\bS^1$ there is a finite sequence $(\theta_1\ldots\theta_r)$, $r\le m_{\rm f}$, (a finite sequence $(\beta_1\ldots\beta_s)$, $s\le m_{\rm b}$) depending  on $x$, such that
	\[
		f_{[\theta_1\ldots\,\theta_r.]}(x)\in J
		\quad \quad
		\big(\text{such that }f_{[\beta_1\ldots\,\beta_s]}(x)\in J\big).
	\]
\end{lemma}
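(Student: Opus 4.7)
The plan is a straightforward compactness argument; I would carry out the Acc$+(J)$ half in full, the Acc$-(J)$ half being identical after replacing $\{f_i\}$ by $\{f_i^{-1}\}$ throughout, which, by the correspondence discussed right after the axioms, swaps Acc$+$ and Acc$-$.

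First, I would unpack Acc$+(J)$ pointwise. The hypothesis $\mathcal{O}^+(\interior J) = \bS^1$ means that for every $x \in \bS^1$ there exist a finite word $(\theta_1(x)\ldots\theta_{r(x)}(x))$ and a point $y(x) \in \interior J$ with $f_{[\theta_1(x)\ldots\theta_{r(x)}(x)]}(y(x)) = x$. By the definition of the inverse notation $f_{[\,\cdot\,.]}$ this is the same as
\[
f_{[\theta_1(x)\ldots\theta_{r(x)}(x).]}(x) = y(x) \in \interior J.
\]
At this stage the word length $r(x)$ depends on $x$, so the whole content of the lemma is to promote this to a \emph{uniform} bound.

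Next, I would upgrade the pointwise statement to a locally uniform one. The map $f_{[\theta_1(x)\ldots\theta_{r(x)}(x).]}$ is a finite composition of $C^1$ diffeomorphisms of $\bS^1$, hence a homeomorphism, and $\interior J$ is open. Continuity at $x$ therefore yields an open neighborhood $V_x \subset \bS^1$ of $x$ such that the \emph{same} word still brings every point of $V_x$ into $\interior J$:
\[
f_{[\theta_1(x)\ldots\theta_{r(x)}(x).]}(V_x) \subset \interior J.
\]

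Finally, I would invoke compactness of $\bS^1$. The cover $\{V_x\}_{x \in \bS^1}$ admits a finite subcover $V_{x_1},\ldots,V_{x_N}$, and I would set $m_{\rm f} \eqdef \max_{1 \le i \le N} r(x_i)$. For an arbitrary $x \in \bS^1$, choose any $i$ with $x \in V_{x_i}$ and re-use the word $(\theta_1(x_i)\ldots\theta_{r(x_i)}(x_i))$: it has length $r(x_i) \le m_{\rm f}$ and, by the preceding inclusion, sends $x$ into $\interior J \subset J$, which is exactly the conclusion. The only thing that could be called a difficulty is bookkeeping the inverse-IFS notation $f_{[\,\cdot\,.]}$ correctly; no analytic estimate enters, and the resulting $m_{\rm f}$ depends only on $J$ through the finite choice of the base points $x_i$ and their associated words. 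The companion bound $m_{\rm b}$ is produced by running the identical argument for the IFS $\{f_i^{-1}\}$.
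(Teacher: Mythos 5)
Your proof is correct and is precisely the argument the paper has in mind: the paper omits a proof, calling the lemma ``an immediate consequence of compactness of $\bS^1$'', and your pointwise-to-locally-uniform step (using that $\interior J$ is open and each $f_{[\theta_1\ldots\,\theta_r.]}$ is continuous) followed by a finite subcover is exactly that compactness argument, with the inverse-word notation handled correctly.
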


\begin{lemma}[Transitivity gives a common interval]\label{lem:commonintJ}
Assume that the IFS $\{f_i\}$ satisfies Axiom T.
	Assume that there are  closed intervals $J^+$ and $J^-$ such that
	the IFS $\{f_i\}$ satisfies CEC$+(J^+)$, Acc$+(J^+)$,
	CEC$-(J^-)$, and Acc$-(J^-)$.

	Then there are positive constants $K_1,\dots, K_5$, and $K_6>0$ such that for every $x\in\bS^1$, 
	for every $\delta<K_6$ the interval $J=\overline{B(x,\delta)}$ satisfies Axioms  CEC$+(J)$, Acc$+(J)$,
	CEC$-(J)$, and Acc$-(J)$ with these constants.

\end{lemma}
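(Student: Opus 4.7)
The plan is to combine the axioms at $J^+$ and $J^-$, transitivity (Axiom~T), and compactness of $\bS^1$ to inherit all four axioms for an arbitrary sufficiently small centered interval $J$, with constants depending only on the data at $J^\pm$ and on $\lVert F\rVert$. First I would extract uniform bounded-length ``connecting compositions''. Axiom Acc$+(J^+)$ gives $\bigcup_\sigma f_{[\sigma]}(\interior J^+)=\bS^1$; compactness yields finitely many $\sigma^1,\dots,\sigma^{N_+}$ of lengths at most $L_+$ still covering $\bS^1$, and a Lebesgue number $\rho_+>0$. Hence for every $x\in\bS^1$ some $\sigma^{j(x)}$ satisfies $f_{[\sigma^{j(x)}]}(\interior J^+)\supset B(x,\rho_+)$. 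Symmetrically, Acc$-(J^-)$ produces backward compositions $\bar\sigma^{j(x)}$ of length $\le L_-$ with $f_{[\bar\sigma^{j(x)}.]}(\interior J^-)\supset B(x,\rho_-)$. Using Axiom~T and the same compactness / Lebesgue-number machinery (together with CEC$-(J^-)$ to realign inside $J^-$) one further extracts, uniformly in $x$, forward compositions $\alpha^\pm_x$ of length $\le L_0$ sending $B(x,\rho_0)$ into $\interior J^\pm$ for some uniform $\rho_0>0$.

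Set $K_6:=\rho_0/2$ and take $J=\overline{B(x,\delta)}$ with $\delta<K_6$. Axioms Acc$\pm(J)$ are then immediate by chaining: $\interior J\subset B(x,\rho_0)$ maps under $\alpha^+_x$ into $\interior J^+$, whose forward orbit covers $\bS^1$ by Acc$+(J^+)$, so $\mathcal{O}^+(\interior J)=\bS^1$, and Acc$-(J)$ is the mirror statement. For Axiom CEC$+(J)$, given an interval $H$ with $H\cap J\ne\emptyset$ and $\lvert H\rvert<K_1$ (with $K_1$ small enough that $H\subset B(x,\rho_0)$ and $\lvert f_{[\alpha^+_x]}(H)\rvert<K_1^+$), I would concatenate three forward blocks: first $\alpha^+_x$ of length $\le L_0$ sending $H$ into an interval $H'\subset\interior J^+$ with $\lvert H'\rvert\ge\lVert F\rVert^{-L_0}\lvert H\rvert$; then the composition $\eta$ supplied by CEC$+(J^+)$ of length $\ell_\eta\le K_2^+\lvert\log\lvert H'\rvert\rvert+K_3^+\le K_2^+\lvert\log\lvert H\rvert\rvert+K_3'$ with $f_{[\eta]}(H')\supset B(J^+,K_4^+)$ and $\log\lvert(f_{[\eta]})'\rvert\ge\ell_\eta K_5^+$ on $H'$; and finally $\sigma^{j(x)}$ of length $\le L_+$ sending $\interior J^+\subset B(J^+,K_4^+)$ onto a superset of $B(x,\rho_+)\supset B(J,K_4)$ with $K_4:=\rho_+/2$. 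The total length is bounded by $K_2^+\lvert\log\lvert H\rvert\rvert+K_3$ for a new uniform $K_3$, while the total log-derivative on $H$ is at least $\ell_\eta K_5^+-(L_0+L_+)\log\lVert F\rVert$; choosing $K_1$ small enough that $\ell_\eta$ dominates the two bounded bridging contributions yields a uniform $K_5:=K_5^+/2$. Axiom CEC$-(J)$ follows by the symmetric construction with $\bar\alpha^-_x$, CEC$-(J^-)$, and $\bar\sigma^{j(x)}$.

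The main obstacle is extracting the uniform \emph{forward} connecting composition $\alpha^+_x$ above: the Acc axioms directly supply forward access from $\interior J^+$ to everywhere and from everywhere to $\interior J^-$, but not forward access from an arbitrary $x$ to $\interior J^+$. The bridge uses Axiom~T to produce a bounded-length forward composition sending a specific open subinterval $I^*\subset\interior J^-$ into $\interior J^+$, and then invokes CEC$-(J^-)$ to realign the image $f_{[\alpha^-_x]}(H)$ inside $I^*$ with uniform control on length; this realignment is the technically most delicate step. Once $\alpha^+_x$ is available uniformly, the remaining bookkeeping---smallness of $K_1$ and the additive negligibility of the bounded-length bridging blocks compared to the $\sim\lvert\log\lvert H\rvert\rvert$-long expanding block coming from CEC$+(J^+)$---is routine.
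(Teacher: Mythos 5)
Your proposal is correct in substance and follows the same overall strategy as the paper: use compactness to extract uniformly bounded-length connecting blocks, sandwich the long expanding block from CEC$+(J^+)$ between them, and absorb the bounded bridging blocks into enlarged $K_3$, a halved $K_5$, and shrunk $K_1,K_6$. The genuine difference is how you obtain the uniform forward connection from an arbitrary $x$ into $\interior J^+$. The paper uses Axiom~T directly and pointwise: for each $x$ a forward composition sends $x$ into the concentric half-interval $J'\subset J^+$, continuity sends a small ball $B(x,4\delta(x))$ into $J^+$, and compactness makes $\delta^+$ and the length $\bar s$ uniform; the symmetric construction with CEC$-(J^-)$, Acc$-(J^-)$ gives $\delta^-$, and $\delta=\min\{\delta^+,\delta^-\}$. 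You instead route through $J^-$: Acc$-(J^-)$ plus compactness sends $B(x,\rho_0)$ forward near $J^-$, the \emph{contracting} consequence of CEC$-(J^-)$ (the inverse of the backward expanding covering applied to a fixed small $I^*\subset\interior J^-$) funnels that image into $I^*$, and a single transitivity-supplied composition (shrinking $I^*$ once and for all so that it maps into $\interior J^+$, not merely meets it) finishes the bridge. Your route is longer but only ever uses transitivity for one fixed pair of open sets, whereas the paper's pointwise ``every $x$ has a forward iterate in $J'$'' is a slightly stronger reading of Axiom~T that your argument in effect justifies; in that sense your treatment of the delicate step is more self-contained, at the cost of invoking CEC$-(J^-)$ already in the forward half.

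One small slip: Acc$+(J)$ is not ``immediate'' from $f_{[\alpha^+_x]}(\interior J)\subset\interior J^+$ together with Acc$+(J^+)$, because Acc$+(J^+)$ covers $\bS^1$ by forward images of all of $\interior J^+$, not of an arbitrary open subset of it. The fix is already in your own construction: apply CEC$+(J^+)$ to $H'=f_{[\alpha^+_x]}(\interior J)$ (it meets $J^+$ and is shorter than $K_1^+$ once $K_6$ is small), so some further forward image contains $B(J^+,K_4^+)\supset\interior J^+$, and then $\mathcal{O}^+(\interior J)\supset\mathcal{O}^+(\interior J^+)=\bS^1$; the mirror statement handles Acc$-(J)$. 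With that one-line repair, and the routine harmonization of the constants $K_4,K_6$ with the Lebesgue numbers $\rho_\pm,\rho_0$, your argument is complete.
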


\begin{proof}
Assume that CEC$+(J^+)$ holds with constants $K_1,\ldots,K_5$.
By compactness of $\bS^1$ and  Axiom Acc$+(J^+)$, the circle $\bS^1$ is covered by a finite union of open sets which are images of $\interior J^+$, 
$$
\bS^1\subset\bigcup_{i=1}^m f_{[\theta_1^i\ldots\,\theta_{r_i}^i]}(\interior J^+).
$$ 
Let $\bar r\eqdef\max_{i=1,\dots, m} r_i$.

Let $J'$ be the concentric
interval contained in $J^+$ of length $\lvert J^+\rvert/2$
(that is, the distance of each point of  the boundary of $J'$ to the boundary of $J^+$ is
$\lvert J^+\rvert/4$).
By Axiom T,  for each $x\in \mathbb{S}^1$ there are
$s(x)\ge1$ and
a finite sequence $(\beta_1\ldots\beta_{s(x)})$ such that $f_{[\beta_1\ldots\,\beta_{s(x)}]}(x)\in J'$.
We also fix   $\delta(x)>0$ sufficiently small such that
$f_{[\beta_1\ldots\,\beta_{s(x)}]}(B(x,4\delta (x)))\subset J^+$.
An argument of compactness provides $\delta^+>0$ and $\bar s\ge1$ such that for every
$x\in \mathbb{S}^1$ there is a sequence
$(\beta_1\ldots\beta_{s})$, $s\le \bar s$, such that
$f_{[\beta_1\ldots\,\beta_{s}]}(B(x,2\delta^+))\subset J^+$.

Take any interval  $H^+\subset f_{[\beta_1\ldots\,\beta_s]}(B(x,2\delta^+))$ with
$\lvert H^+\rvert<K_1$. By CEC$+(J^+)$ applied to $H^+$, there exists a finite sequence $(\eta_0\ldots\eta_{\ell-1})$ with 
$$
\ell\le K_2\lvert\log\,\lvert H^+\rvert\rvert+K_3
$$ 
such that 
$$
f_{[\eta_0\ldots\,\eta_{\ell-1}]}(H^+)\supset J^+
\quad
\mbox{and}
\quad \log\,\lvert (f_{[\eta_0\ldots\,\eta_{\ell-1}]})'(y)\rvert \ge \ell K_5
$$ 
for every $y\in H^+$.

Let now $J\eqdef B(x,\delta^+)$.  For every interval $H\subset\bS^1$ intersecting $J$ with $\lvert H\rvert<\delta^+$ one has $H\subset B(x,2\delta^+)$, thus by the above choices,
there is some $i$ such that
\[
	f_{[\bar\eta_0\ldots\,\bar\eta_{j-1}]}(H) \supset B(x,\delta),
	\quad\text{ where }\quad
	(\bar\eta_0\ldots\,\bar\eta_{j-1})
	\eqdef (\beta_1\ldots\,\beta_s\eta_0\ldots\eta_{\ell-1}\theta_1^i\ldots\,\theta_{r_i}^i)
	.
\]
Since $j=s+\ell+r_i$ where
$s\le \bar s$ and $r_i\le \bar r$ we get the announced covering and expanding properties after
replacing the constants.

We repeat the previous construction with properties CEC$-(J^-)$ and Acc$-(J^-)$ obtaining
a number $\delta^-$. Now it is enough to take $\delta=\min\{\delta^+,\delta^-\}$.
\end{proof}

%
%
%
%

The next two lemmas follow straightforwardly from the definitions and their proofs are omitted.

\begin{lemma}[A common interval gives transitivity and density of periodic points]
	Assume that  there is a closed interval $J$ such that the IFS $\{f_i\}$ satisfies Axioms  CEC$+(J)$, Acc$+(J)$, CEC$-(J)$, and Acc$-(J)$.
	
	Then the IFS satisfies Axiom T.
Moreover, $\Sigma_k\times\bS^1$ is the closure of periodic orbits with negative/positive fiber exponents.
\end{lemma}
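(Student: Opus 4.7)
The plan is to treat the two conclusions separately.

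\textit{Transitivity.} Axiom T for the IFS is equivalent to topological transitivity of $F$ on the Polish space $\Sigma_k\times\bS^1$ (a standard Baire category argument then produces a residual set of fibers with simultaneously dense forward and backward IFS-orbits on $\bS^1$). To prove that $F$ is topologically transitive, it suffices, given arbitrary basic open sets $\tilde U=[\alpha_0\ldots\alpha_p]\times U$ and $\tilde V=[\beta_0\ldots\beta_q]\times V$, to exhibit a finite word of the form $\alpha_0\ldots\alpha_p\,\omega\,\beta_0\ldots\beta_q$ whose associated composition maps a subinterval of $U$ into $V$. The connecting word $\omega$ is built in four steps, one per axiom: (i)~use Axiom Acc$-(J)$ (via Lemma~\ref{lem:idiota}) and continuity of $f_{[\alpha_0\ldots\alpha_p]}$ to push a small subinterval of $U$ into $\interior J$, producing an interval $H\subset J$ with $|H|<K_1$; (ii)~use Axiom CEC$+(J)$ to obtain $f_{[\eta_0\ldots\eta_{\ell-1}]}$ with $f_{[\eta_0\ldots\eta_{\ell-1}]}(H)\supset B(J,K_4)\supset J$; (iii)~use Axiom Acc$+(J)$ together with continuity to send a subinterval of $\interior J$ into $f_{[\beta_0\ldots\beta_q]}^{-1}(V)$; (iv)~close with $f_{[\beta_0\ldots\beta_q]}$ to land in $V$.

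\textit{Density of expanding periodic orbits.} Fix a basic open set $\tilde W=[\alpha_0\ldots\alpha_p]\times W$. I will produce a finite word $\theta=\alpha_0\ldots\alpha_p\,\tau\,\eta\,\mu$ of length $N$ and a hyperbolic repelling fixed point $x_\ast\in W$ of $f_\theta$; the $F$-orbit through $(\theta^\infty,x_\ast)\in\tilde W$ is then periodic of fiber exponent $\tfrac{1}{N}\log|(f_\theta)'(x_\ast)|>0$. Choose $y_0\in W$. By Axiom Acc$+(J)$, pick $\mu$ and $x_0\in\interior J$ with $f_\mu(x_0)=y_0$ and a neighborhood $U_0\subset\interior J$ of $x_0$ on which $f_\mu$ is a diffeomorphism with $f_\mu(U_0)\subset W$. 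By Axiom Acc$-(J)$ plus continuity, pick $\tau$ and a small subinterval $W'\subset f_\mu(U_0)\cap W$ around $y_0$ so that $H\eqdef f_{[\alpha_0\ldots\alpha_p\tau]}(W')\subset\interior J$ is as small as desired (in particular $|H|<K_1$). Applying Axiom CEC$+(J)$ to $H$ yields $\eta$ of length $\ell\ge K_2|\log|H||$ with $f_\eta(H)\supset B(J,K_4)\supset U_0$ and $|f_\eta'|\ge e^{\ell K_5}$ on $H$. Setting $g\eqdef f_{[\alpha_0\ldots\alpha_p\tau\eta\mu]}$, one obtains $g(W')=f_\mu(f_\eta(H))\supset f_\mu(U_0)\supset W'$ and $|g'|\ge c\,e^{\ell K_5}$ on $W'$, where $c>0$ absorbs a uniform lower bound on the distortion of the finite blocks $\alpha,\tau,\mu$ (which are drawn from finite families by Lemma~\ref{lem:idiota}). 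For $|H|$ small enough, $g$ is strictly expanding on $W'$ and the Markov-branch argument (the inverse branch from $g(W')\supset W'$ is a contraction of $W'$ into itself) supplies the required repelling fixed point.

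\textit{Density of contracting periodic orbits and the main obstacle.} The construction for negative fiber exponent is symmetric, driven by Axiom CEC$-(J)$. Expressed in the forward IFS, this axiom asserts that for every prescribed small interval $H\subset J$ with $|H|<K_1$ there is a forward composition $f_{[\eta]}$ (coming from CEC$+$ applied to $\{f_i^{-1}\}$) that contracts $B(J,K_4)$ into $H$ with $|f_{[\eta]}'|\le e^{-\ell K_5}$ on $B(J,K_4)$. With $\mu,x_0,y_0,U_0$ chosen as before and $W_0\subset f_\mu(U_0)\cap W$ a small neighborhood of $y_0$, I take $H$ to be a small subinterval of $U_0\cap f_\mu^{-1}(W_0)$ around $x_0$ and choose $\tau$ so that $f_{[\alpha_0\ldots\alpha_p\tau]}(W_0)\subset B(J,K_4)$. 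The composition $g\eqdef f_{[\alpha_0\ldots\alpha_p\tau\eta\mu]}$ then satisfies $g(W_0)\subset f_\mu(H)\subset W_0$ and $|g'|\le C\,e^{-\ell K_5}$, which is a strict contraction for $\ell$ large; Banach's theorem yields a unique attracting fixed point $x_\ast\in W_0\subset W$. The one quantitative subtlety common to both cases is to make the CEC exponential factor $e^{\pm\ell K_5}$ dominate the distortion of the bounded-length blocks $\alpha,\tau,\mu$; this is handled by extracting $\tau,\mu$ from the finite families supplied by the accessibility axioms (Lemma~\ref{lem:idiota}), which uniformly bounds their distortion, and then shrinking $|H|$ (equivalently $|W'|$ or $|W_0|$) to force $\ell$ large enough for the exponential to win.
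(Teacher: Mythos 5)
The paper omits this proof (it is declared to follow straightforwardly from the definitions), and your argument is essentially the intended one: reduce Axiom T to topological transitivity of $F$, connect any two fibered open sets by a word of the form (Acc$-$ into $J$) $\cdot$ (CEC$+$ covering) $\cdot$ (Acc$+$ out of $J$), and close up the same loop into a fixed small interval to produce expanding, respectively contracting, fixed points of a word map, hence hyperbolic periodic orbits of $F$ in any prescribed basic open set. The loop constructions, the use of Lemma~\ref{lem:idiota} to keep the connecting blocks of uniformly bounded length, and the inverse-branch/Banach fixed point arguments are all correct; the reduction to cylinders charging only nonnegative coordinates is harmless, since a periodic orbit meeting $F^{-m}$ of a two-sided cylinder also meets the cylinder itself.

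One justification is stated backwards and should be repaired. You write that applying Axiom CEC$+(J)$ to $H$ yields $\eta$ of length $\ell\ge K_2\lvert\log\lvert H\rvert\rvert$; in fact CEC$+(J)$ only gives the \emph{upper} bound $\ell\le K_2\lvert\log\lvert H\rvert\rvert+K_3$, and the paper explicitly points out that no lower bound is part of the axiom. Your construction genuinely needs $\ell\to\infty$ as $\lvert H\rvert\to0$, since the factor $e^{\ell K_5}$ (resp.\ $e^{-\ell K_5}$) must dominate the constant $c$ (resp.\ $C$) coming from the bounded-length blocks $\alpha,\tau,\mu$. This is true, but it must be quoted either from Lemma~\ref{lem:warsaw} or from the elementary estimate $\lVert F\rVert^{\ell}\,\lvert H\rvert\ge\lvert f_{[\eta]}(H)\rvert\ge\lvert B(J,K_4)\rvert$ (and its analogue $\lVert F\rVert^{-\ell}\lvert B(J,K_4)\rvert\le\lvert H\rvert$ in the contracting case), which gives $\ell\ge\big(\lvert\log\lvert H\rvert\rvert-O(1)\big)/\log\lVert F\rVert$. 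With that substitution your final quantitative paragraph is fully justified and the proof is complete.
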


\begin{lemma}\label{lem:transssitivo}
	Assume that there are intervals $J^+$ and $J^-$ such that the IFS satisfies Axioms CEC$+(J^+)$ and Acc$\pm(J^+)$ and Axioms CEC$-(J^-)$ and Acc$\pm(J^-)$.  Suppose that every $x\in\bS^1$ has a forward and a backward iterate in the interior of $J^+$ and has a forward and a backward iterate in  the interior of $J^-$.

Then, the IFS satisfies Axiom T and there is an interval $J$ such that the IFS satisfies Axioms CEC$\pm(J)$ and Acc$\pm(J)$.
\end{lemma}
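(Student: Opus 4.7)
The plan is to first establish Axiom T (the existence of a point $x\in\bS^1$ with both $\mathcal{O}^+(x)$ and $\mathcal{O}^-(x)$ dense) and then apply Lemma~\ref{lem:commonintJ}; once Axiom T is known, the remaining hypotheses CEC$+(J^+)$, Acc$+(J^+)$, CEC$-(J^-)$, Acc$-(J^-)$ are part of the assumption, so that lemma directly produces the common blending interval $J$ satisfying all four axioms CEC$\pm(J)$ and Acc$\pm(J)$.

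The heart of the argument is an IFS-mixing statement: for every pair of non-empty open sets $U,V\subset\bS^1$ there is a finite forward word $(\theta)$ with $f_{[\theta]}(U)\cap V\neq\emptyset$. To prove this I would take any $z\in U$; the hypothesis that $z$ has a forward iterate in $\interior J^+$, together with continuity of the fiber maps, gives an open subinterval $H_0\subset U$ and a word $(\gamma)$ with $H\eqdef f_{[\gamma]}(H_0)\subset\interior J^+$, and after shrinking $H_0$ we may also arrange $\lvert H\rvert<K_1$. Then Axiom CEC$+(J^+)$ applied to $H$ yields a word $(\eta)$ with $f_{[\eta]}(H)\supset B(J^+,K_4)\supset\interior J^+$, and Axiom Acc$+(J^+)$ produces a word $(\alpha)$ and a point $u\in\interior J^+$ with $f_{[\alpha]}(u)\in V$. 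Concatenating, $f_{[\gamma\eta\alpha]}(H_0)\supset f_{[\alpha]}(\interior J^+)\ni f_{[\alpha]}(u)\in V$, giving the claim. The delicate point here is that CEC$+(J^+)$ controls only the image of a full interval and not the orbit of any given point; the trick is therefore to work throughout with the interval $H_0$ and only extract a single orbit point at the very end.

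Axiom T then follows by a standard Baire-category argument. Fix a countable basis $\{V_n\}_{n\ge 1}$ of $\bS^1$ and set
\[
E_n^+\eqdef\bigcup_{(\theta)}f_{[\theta]}^{-1}(V_n),
\qquad
E_n^-\eqdef\bigcup_{(\theta)}f_{[\theta]}(V_n);
\]
both are open, and the mixing statement above (applied once to the pair $(U,V_n)$ to get density of $E_n^+$, and once with roles swapped to get density of $E_n^-$) shows that each is dense in $\bS^1$. Hence $\bigcap_n(E_n^+\cap E_n^-)$ is a dense $G_\delta$, and every point in it has both forward and backward IFS orbit dense, which is precisely Axiom T. Invoking Lemma~\ref{lem:commonintJ} with this Axiom T together with the four Axioms CEC$+(J^+)$, Acc$+(J^+)$, CEC$-(J^-)$, Acc$-(J^-)$ completes the proof.
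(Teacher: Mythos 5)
Your proof is correct. The paper omits the argument for this lemma altogether (it is declared to follow ``straightforwardly from the definitions''), and what you wrote is exactly the natural completion: the interval-based mixing step (working with $H_0$ and its image $H\subset\interior J^+$ so that CEC$+(J^+)$ can be applied, then using Acc$+(J^+)$ to land in $V$), the Baire-category upgrade to a single point with dense forward and backward orbits (which is also the ``standard genericity argument'' the paper alludes to when identifying Axiom T with transitivity of $F$), and finally Lemma~\ref{lem:commonintJ}, whose hypotheses CEC$+(J^+)$, Acc$+(J^+)$, CEC$-(J^-)$, Acc$-(J^-)$ are directly among the assumptions. A small observation: your derivation of Axiom T uses only the $J^+$ hypotheses (CEC$+(J^+)$, Acc$+(J^+)$, and forward accessibility of $\interior J^+$ from every point), with the $J^-$ hypotheses entering only through Lemma~\ref{lem:commonintJ}; this is a slight economy over what the statement provides and is perfectly legitimate.
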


Having in mind the previous results we introduce the following definition.

\begin{definition}
	We say that a step skew-product map $F$ as in~\eqref{eq:sp} satisfies Axioms CEC$\pm$ and Acc$\pm$ if there is some closed interval $J\subset\bS^1$ satisfying Axioms CEC$\pm(J)$ and Acc$\pm(J)$.
\end{definition}

 Axiom CEC$+(J)$ demands only  an upper bound for the
size $\ell$ of the covering sequence $(\eta_0\ldots \eta_{\ell-1})$, depending uniformly on the size of the interval $H$ intersecting $J$.
The next lemma claims that
 the size of  the covering sequence can be also bounded from below.
One can state an analogous statement for the Axiom CEC$-(J)$.

\begin{lemma}\label{lem:warsaw}
	Assume that the IFS $\{f_i\}$ satisfies Axiom CEC$+(J)$ with constants $K_1,\ldots,K_5$. Then for every interval $H$ intersecting $J$ and satisfying $\lvert H\rvert<K_1$, there is a subinterval $\widehat H\subset H$ and a constant $\iota=\iota(H)$ satisfying
\begin{itemize}
\item there is a finite sequence $(\rho_0\ldots \rho_{\iota-1})$,
\[
	K_2\,\lvert\log\,\lvert H\rvert\rvert+K_3
	\le \iota
	\le 2(K_2\,\lvert\log\,\lvert H\rvert\rvert+K_3),
\]	
such that
\[
	f_{[\rho_0\ldots\,\rho_{\iota-1}]}(\widehat H)\supset B(J,K_4),
\]	
\item for every $x\in \widehat H$ we have
\[
	\log \,\lvert  (f_{[\rho_0\ldots\,\rho_{\iota-1}]})'(x)\rvert\ge \iota K_5.
\]		
\end{itemize}
\end{lemma}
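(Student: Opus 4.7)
The plan is to iterate Axiom CEC$+(J)$ several times, concatenating the covering sequences and taking nested preimages until the accumulated length lies inside the window $[L,2L)$ for $L\eqdef K_2\,\lvert\log\,\lvert H\rvert\rvert+K_3$. Throughout I will tacitly shrink $K_1$ so that $K_1\le\lvert J\rvert$; this is harmless and ensures that every interval I feed into CEC$+(J)$ along the way can be copied inside $J$ at the next step.

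I would set $H_0\eqdef H$ and apply CEC$+(J)$ to produce a sequence $\xi^{(1)}$ of length $\ell_1\le L$ with $f_{[\xi^{(1)}]}(H_0)\supset B(J,K_4)$. If already $\ell_1\ge L$, the iteration stops at $k=1$. Otherwise the image contains $J$ and I can choose a closed subinterval $H_1\subset J\cap f_{[\xi^{(1)}]}(H_0)$ with $\lvert H_1\rvert=\lvert H\rvert$ and re-apply CEC$+(J)$ to it. Inductively this yields sequences $\xi^{(i)}$ of lengths $\ell_i\le L$ and subintervals $H_i\subset J$ with $\lvert H_i\rvert=\lvert H\rvert$ and $f_{[\xi^{(i)}]}(H_{i-1})\supset B(J,K_4)$. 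Then I let $k$ be the smallest index with $\ell_1+\cdots+\ell_k\ge L$ (finite since each $\ell_i\ge 1$), and by minimality of $k$ the integer $\iota\eqdef\ell_1+\cdots+\ell_k$ obeys
\[
  L\le\iota=(\ell_1+\cdots+\ell_{k-1})+\ell_k<L+L=2L,
\]
which is exactly the two-sided bound demanded by the lemma.

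To produce $\widehat H$ I would pull back step by step: set $A_k\eqdef H_{k-1}$ and, for $i=k-1,\dots,1$, let $A_i\subset H_{i-1}$ be the subinterval with $f_{[\xi^{(i)}]}(A_i)=A_{i+1}$. This is well defined because $A_{i+1}\subset H_i\subset f_{[\xi^{(i)}]}(H_{i-1})$ and $f_{[\xi^{(i)}]}$ is a diffeomorphism of $H_{i-1}$ onto its image. Setting $\widehat H\eqdef A_1\subset H$ and $\rho\eqdef\xi^{(1)}\cdots\xi^{(k)}$ (a word of length $\iota$), the composition telescopes:
\[
  f_{[\rho]}(\widehat H)=f_{[\xi^{(k)}]}(A_k)=f_{[\xi^{(k)}]}(H_{k-1})\supset B(J,K_4).
\]
For the derivative bound I would apply the chain rule to $x\in\widehat H$ with intermediate points $y_i\eqdef f_{[\xi^{(i-1)}\cdots\xi^{(1)}]}(x)\in A_i\subset H_{i-1}$; then the expansion clause of CEC$+(J)$ at each level gives
\[
  \log\lvert(f_{[\rho]})'(x)\rvert=\sum_{i=1}^k\log\lvert(f_{[\xi^{(i)}]})'(y_i)\rvert\ge\sum_{i=1}^k\ell_i K_5=\iota K_5.
\]

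The only genuinely delicate point is the implicit size requirement $\lvert H\rvert\le\lvert J\rvert$, needed to fit an isometric copy of $H$ inside $J$ at every step; this is why I prefer to shrink $K_1$ at the outset. Everything else is bookkeeping: each CEC$+(J)$-application adds at most $L$ to the running total, so the first overshoot is guaranteed to stay strictly below $2L$, while minimality of $k$ secures the lower bound $L$.
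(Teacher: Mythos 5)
Your argument is essentially the paper's proof of this lemma: apply CEC$+(J)$ repeatedly to copies of $H$ of the same length, stop at the first moment the accumulated length reaches the threshold $K_2\lvert\log\lvert H\rvert\rvert+K_3$ (so the total lies between the threshold and twice it), concatenate the covering words, pull $\widehat H$ back through the chain of nested intervals, and add up the expansion estimates via the chain rule. The only deviation is your requirement $H_i\subset J$, which forces the shrink $K_1\le\lvert J\rvert$ and hence proves the statement only for $\lvert H\rvert<\min\{K_1,\lvert J\rvert\}$; the paper avoids this by taking $H_i$ merely as a length-$\lvert H\rvert$ subinterval of the image $f_{[\xi^{(i)}]}(H_{i-1})\supset B(J,K_4)$ that meets $J$ (possible because the image has length $>\lvert H\rvert$ by the expansion clause), which gives the lemma with the original constant $K_1$.
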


\begin{proof}
	The proof is by induction. Let $H_0=H$ and consider $(\eta^0_0\ldots\eta^0_{\ell_0-1})$ given by Axiom CEC$+(J)$ applied to $H_0$. Now for $j\ge0$ consider the following recursion:
\begin{itemize}
\item [i)]  if $\ell_0+\cdots+\ell_j< K_2\,\lvert\log\,\lvert H_j\rvert\rvert+K_3$ then we pick an interval $H_{j+1}\subset f_{[\eta^j_0\ldots\,\eta^j_{\ell_j-1}]}(H_j)$ satisfying $\lvert H_{j+1}\rvert=\lvert H_j\rvert=\lvert H_0\rvert$ and repeat the recursion;
\item [ii)] otherwise stop the recursion and let $\iota\eqdef \ell_0+\cdots+\ell_j$.
\end{itemize}
Clearly, in the above recursion there is a first $j\ge0$ such that case ii) applies and therefore, by construction and the fact that $\ell_j\le K_2\lvert\log\lvert H\rvert\rvert+K_3$, we have
\[
	K_2\,\lvert\log\,\lvert H\rvert\rvert+K_3
	\le \iota 
	\le 2K_2\,\lvert\log\,\lvert H\rvert\rvert+2K_3
\] 	
and we put
\[
	(\rho_0\ldots\rho_{\iota-1})
	= (\eta^0_0\ldots\eta^0_{\ell_0-1}\ldots\eta^j_0\ldots\eta^j_{\ell_j-1}).
\]
We pick the subinterval $\widehat H = (f_{[\rho_0\ldots\,\rho_{\iota-1}]})^{-1}\big(B(J,K_4)\big)\subset H$.
By construction, $\widehat H$ satisfies the covering property. To get the expansion just note that
\[
	\log \,\lvert  (f_{[\rho_0\ldots\,\rho_{\iota-1}]})'(x)\rvert
	\ge (\ell_0+\ldots+\ell_j)K_5=\iota K_5.
\]
This proves the lemma.
\end{proof}

\section{Some general tools}\label{sec:generaltoos}

In this section, we continue to consider a step skew-product map $F$ as in~\eqref{eq:sp} with $C^1$ fiber maps. We derive a number of ``uniformization'' results for ergodic measures following Littlewood's heuristic principles (here using the fact that due to Egorov's theorem every pointwise converging sequence of measurable functions is nearly uniformly convergent). We also state some very general distortion results which, in particular, allow us to deal with zero exponent orbits.

\subsection{Approximation of positive entropy ergodic measures}

The following statement is a consequence of ergodicity,
the definition of a Lyapunov exponent, the Brin-Katok theorem, the Birkhoff ergodic theorem, and the Egorov theorem. Recall the definition of separated points, see~\cite[Chapter 7]{Wal:82}.

\begin{proposition}\label{pro:BriKat}
Consider a skew-product map
 $F$ as in \eqref{eq:sp} whose fiber maps are $C^1$.
Let $\mu\in\cM_{\rm erg}$ be a measure satisfying $h(\mu)>0$. Let $\alpha=\chi(\mu)$.
Consider continuous functions $\varphi_1,\ldots,\varphi_\ell\colon\Sigma_k\times\bS^1\to\bR$ and put $\overline\varphi_j=\int\varphi_j\,d\mu$.
Let  $A\subset\Sigma_k\times\bS^1$ be a measurable set with $\mu(A)>0$.

Given $\kappa\in(0,\mu(A)/4)$, $r\in(0,1)$, and $\varepsilon_H\in(0,1)$, for every $\varepsilon_E>0$ small enough there exist $n_0=n_0(\varkappa,\varepsilon_H)\ge1$ and a set $\Lambda'\subset\Sigma_k\times\bS^1$ satisfying $\mu(\Lambda')>1-\varkappa$ such that:
\begin{itemize}
\item[(1)] there exists $K_0=K_0(\kappa,\varepsilon_E)>1$ such that for every $n\ge0$ and every $X=(\xi,x)\in\Lambda'$ we have
\[
	K_0^{-1}e^{n(\alpha-\varepsilon_E)}
	\le \lVert (f^n_\xi)'(x)\rVert
	\le K_0 e^{n(\alpha+\varepsilon_E)},
\]
and for every $j=1,\ldots,\ell$ we have
\[
	-K_0+n(\overline\varphi_j-\varepsilon_E)
	\le \sum_{\ell=0}^{n-1}\varphi_j(F^\ell(X))
	\le K_0+n(\overline\varphi_j+\varepsilon_E),
\]
\item[(2)] for every $n\ge n_0$ there is $m\in\{n,\ldots,n+\lceil r n\rceil+1\}$ and a set  of $(m,1)$-separated points $\{X_i\}\subset A\cap\Lambda'$ of cardinality $M_m(A)$ satisfying
\[
	M_m(A)
	\ge \big(\mu(A)-\kappa\big) \cdot e^{m(h(\mu)-\varepsilon_H)}
\]
and
\[
	F^m(X_i)\in A\,.
\]
\end{itemize}
\end{proposition}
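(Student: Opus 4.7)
The proof uniformizes three classical tools---Birkhoff's ergodic theorem, Egorov's theorem, and the Brin--Katok local entropy formula---on a common large set $\Lambda'$, and then applies a pigeonhole over the window $\{n,\ldots,n+\lceil rn\rceil+1\}$ to select a return time $m^*$ at which many orbits hit $A$. For item (1), I would apply Birkhoff to the $\ell+1$ continuous observables $\log\lvert(f_{\xi_0})'(x)\rvert$ and $\varphi_1,\ldots,\varphi_\ell$ whose averages are $\alpha$ and $\overline\varphi_j$ by ergodicity; Egorov upgrades pointwise to uniform convergence on a set $\Lambda_1$ of $\mu$-measure $>1-\varkappa/3$. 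Choosing $\varepsilon_E$ smaller than the final desired accuracy and $n$ beyond an $n_1(\varkappa,\varepsilon_E)$-threshold gives the asymptotic rate $\varepsilon_E$, while the finitely many small-$n$ defects absorb into the additive constant $K_0$.

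For item (2), use the Brin--Katok local entropy formula: $-\tfrac{1}{n}\log\mu(B_n(X,\varepsilon))\to h(\mu)$ for $\mu$-a.e.\ $X$ as $n\to\infty$, for all sufficiently small $\varepsilon>0$. Fix such a scale $\varepsilon_{\rm BK}$ and extract by Egorov a set $\Lambda_2$ of $\mu$-measure $>1-\varkappa/3$ on which $\mu(B_n(X,\varepsilon_{\rm BK}))\le e^{-n(h(\mu)-\varepsilon_H')}$ for $n\ge n_2$, where $\varepsilon_H'<\varepsilon_H$ is chosen with a margin depending on $r$. Apply Birkhoff once more to $\chi_A$ and obtain a set $\Lambda_3$ of $\mu$-measure $>1-\varkappa/3$ on which the visit frequency to $A$ over windows of length $\ge n_3$ is within a small $\varepsilon_{\rm bir}$ of $\mu(A)$. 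Put $\Lambda'\eqdef \Lambda_1\cap\Lambda_2\cap\Lambda_3$ and $n_0\eqdef \max(n_1,n_2,n_3)$.

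Now fix $n\ge n_0$, set $A'\eqdef A\cap\Lambda'$ so that $\mu(A')\ge\mu(A)-\varkappa$, and pick a maximal $(n,\varepsilon_{\rm BK})$-separated set $\{X_i\}_{i=1}^N\subset A'$. Maximality forces the Bowen balls $B_n(X_i,\varepsilon_{\rm BK})$ to cover $A'$, and combining this with the Brin--Katok bound on $\Lambda_2$ yields $N\ge(\mu(A)-\varkappa)e^{n(h(\mu)-\varepsilon_H')}$. For each $X_i\in\Lambda_3$, the Birkhoff control on $\chi_A$ produces at least $(\mu(A)-\varepsilon_{\rm bir})(\lceil rn\rceil+1)$ indices $m$ in the window with $F^m(X_i)\in A$. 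Double counting pairs $(X_i,m)$ and pigeonholing over $m$ delivers a single $m^*\in\{n,\ldots,n+\lceil rn\rceil+1\}$ for which the thinned set $\{X_i:F^{m^*}(X_i)\in A\}$ retains at least a $(\mu(A)-2\varepsilon_{\rm bir})$-fraction of the $X_i$; since $m^*\ge n$, this subset remains $(m^*,\varepsilon_{\rm BK})$-separated.

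The step that will require the most care is the calibration of the small parameters. To convert the cardinality bound at time $n$ into the target $(\mu(A)-\varkappa)\cdot e^{m^*(h(\mu)-\varepsilon_H)}$ at time $m^*\le(1+r)n+2$, one must choose $\varepsilon_H'$ strictly smaller than $\varepsilon_H$ with a margin dictated by $r$ and $h(\mu)$, and absorb the multiplicative losses into the $\varkappa$-slack. The freedom to shrink $\varepsilon_E$, $\varepsilon_{\rm bir}$, and $\varepsilon_H'$ after fixing $r$, $\varepsilon_H$, and $\varkappa$ is exactly what permits this matching, and is reflected in the statement's asymmetric dependence $n_0=n_0(\varkappa,\varepsilon_H)$.
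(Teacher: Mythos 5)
Your item (1) and the general strategy (Birkhoff + Egorov + a local-entropy bound + pigeonhole over the window) match the paper, but item (2) as you set it up has two genuine problems. First, the separation scale: the statement asks for $(m,1)$-separated points, and in the metric $d((\xi,x),(\eta,y))=\sup\{d_1(\xi,\eta),|x-y|\}$ separation at scale $1$ means precisely that the words $(\xi_0\ldots\xi_{m-1})$ are pairwise distinct --- this is exactly what is needed later (item (ii) of the Skeleton property, and the disjointness of the Markov rectangles $[\xi^i_0\ldots\xi^i_{m-1}]\times I_i$). Your maximal $(n,\varepsilon_{\rm BK})$-separated set only gives separation at the small scale $\varepsilon_{\rm BK}$: two of your points may share the same length-$m$ itinerary and differ only in the fiber coordinate or in past symbols, so you have not proved the statement as written. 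The paper avoids this by passing to the projected measure $\nu=\varpi_\ast\mu$ on $\Sigma_k$, using $h(\nu)=h(\mu)$ (the fibers carry no entropy, by the Ledrappier--Walters formula quoted in the Appendix), and applying Brin--Katok/SMB to cylinders, so that the counting is done directly in terms of distinct $m$-cylinders; a greedy removal of cylinders of $\nu$-measure at most $e^{-m(h(\mu)-\varepsilon_H/2)}$ inside $\varpi(A\cap\Lambda)$ then produces the required number of points with pairwise different words.

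Second, and more seriously, your exponent bookkeeping fails: you count separated points at time $n$ (cardinality roughly $e^{n(h(\mu)-\varepsilon_H')}$) and then thin to a common return time $m^\ast$, which the pigeonhole may place anywhere up to $n+\lceil rn\rceil+1$. The target is $(\mu(A)-\kappa)e^{m^\ast(h(\mu)-\varepsilon_H)}$, so you would need $n(h(\mu)-\varepsilon_H')\gtrsim m^\ast(h(\mu)-\varepsilon_H)$ with $m^\ast$ possibly close to $(1+r)n$; this forces $\varepsilon_H'\le \varepsilon_H-r(h(\mu)-\varepsilon_H)+o(1)$, which is impossible (no positive $\varepsilon_H'$ exists) whenever $\varepsilon_H<\tfrac{r}{1+r}h(\mu)$ --- and $r\in(0,1)$ and $\varepsilon_H$ are given, not at your disposal. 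The deficit is a genuine exponential factor of order $e^{(m^\ast-n)h(\mu)}$ that cannot be absorbed into the $\kappa$-slack or the auxiliary epsilons, so the "calibration" step you flag does not go through. The cure is to reverse the order of operations, as the paper effectively does: each point of $A\cap\Lambda'$ has a return to $A$ at some time in the window, so a pigeonhole (over the at most $nr+2$ times in the window, a loss of at most $e^{m\varepsilon_H/2}$ since $n<e^{n\varepsilon_H/2}$ for $n\ge n_0$) fixes the common return time $m$ first, and only then is the entropy counting performed at scale $m$ via the cylinder bound $e^{-m(h(\mu)-\varepsilon_H/2)}$, yielding a bound exponential in $m$ rather than in $n$.
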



Before proving the proposition we make some preliminary remarks.
Given a positive integer $n$ and a positive number $\varrho$, for a point $X\in\Sigma_k\times\bS^1$ we consider the \emph{$(n,\varrho)$-Bowen ball} centered at $X$
\[
	B_n(X,\varrho)\eqdef\bigcap_{\ell=0}^{n-1}F^{-\ell}\big(B\big(F^\ell(X),\varrho\big)\big),
\]	
where $B(Y,\varrho)$ denotes the open ball of radius $\varrho$ centered at $Y$. We will also consider the analogously defined $(n,1)$-Bowen ball relative to the base dynamics $\sigma\colon\Sigma_k\to\Sigma_k$ and recall that for given $\xi\in\Sigma_k$ it is simply the $n$th level cylinder
\[
	B_n(\xi,1)
	\eqdef [\xi_0\ldots\xi_{n-1}]
	= \{\eta\in\Sigma_k\colon \eta_i=\xi_i\text{ for }i=0,\ldots,n-1\}.
\]	
We also note that any pair of disjoint level $n$ cylinders gives rise to $(n,1)$-separated sequences in $\Sigma_k$ and hence to $(n,1)$-separated points in $\Sigma_k\times\bS^1$, we will use this fact a couple of times.

Consider the natural projection $\varpi\colon\Sigma_k\times\bS^1\to\Sigma_k\colon (\xi,x)\mapsto \xi$ to the first coordinate and observe that the pushforward measure $\nu\eqdef\varpi_\ast\mu$ is ergodic invariant with respect to $\sigma\colon\Sigma_k\to\Sigma_k$. Note that $h(\nu)=h(\mu)$ (this follows from~\eqref{eq:entropyproj}).

\begin{proof}[Proof of Proposition~\ref{pro:BriKat}]
Fix $\kappa,r,\varepsilon_H>0$ as in the hypotheses.
We start with a preliminary estimate. Given $\mu$ as in the proposition let 
$\nu\eqdef\varpi_\ast\mu$. 

\begin{lemma}
	There is a set $\Lambda_1\subset\Sigma_k\times\bS^1$ of measure at least $1-\kappa/4$ and a number $n_1=n_1(\kappa,\varepsilon_H)\ge1$ such that for every $m \ge n_1$
	and every $X=(\xi,x)\in\Lambda_1$ we have
\begin{equation*}
	e^{- m (h(\mu)+\varepsilon_H/2)}
	\le \mu(B_{m}(X,1))
	\le e^{-m (h(\mu)-\varepsilon_H/2)}
\end{equation*}
and
\begin{equation}\label{eq:formullanumeasure}
	e^{-m (h(\mu)+\varepsilon_H/2)}
	\le \nu([\xi_0\ldots\xi_{m-1}])
	\le e^{-m (h(\mu)-\varepsilon_H/2)}.
\end{equation}
\end{lemma}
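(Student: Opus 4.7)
The plan is to reduce both bounds to a single application of the Shannon--McMillan--Breiman theorem, after identifying the $(m,1)$-Bowen ball of $F$ with a cylinder in the base times $\bS^1$. First, I would verify the elementary inclusion $B_m((\xi,x),1)\subset [\xi_{-c}\ldots\xi_{m-1+c}]\times\bS^1$, with a matching reverse inclusion up to a bounded number $c$ of coordinates, using that $d_1(\sigma^i\xi,\sigma^i\eta)<1$ forces agreement of $\xi$ and $\eta$ on indices around $i$, while the fiber condition $\lvert f^i_\xi x-f^i_\eta y\rvert<1$ is automatic by the diameter of $\bS^1$ in the chosen metric. This identification gives $\mu(B_m(X,1))=\nu(C_m(\xi))$ for a cylinder $C_m(\xi)$ around $\xi$ of length $m+O(1)$, so the two displayed inequalities become essentially the same statement about the base measure $\nu$.

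Second, by~\eqref{eq:entropyproj} the projection $\nu=\varpi_\ast\mu$ is $\sigma$-ergodic with $h(\nu)=h(\mu)$, so the Shannon--McMillan--Breiman theorem applied to $(\Sigma_k,\sigma,\nu)$ yields, for $\nu$-a.e.\ $\xi$,
\[
-\frac{1}{m}\log\nu([\xi_0\ldots\xi_{m-1}])\longrightarrow h(\mu)\qquad\text{as }m\to\infty.
\]
Because the two cylinder families $C_m(\xi)$ and $[\xi_0\ldots\xi_{m-1}]$ differ only by a fixed finite number of coordinate constraints, the same $\nu$-a.e.\ limit holds with $C_m(\xi)$ in place of $[\xi_0\ldots\xi_{m-1}]$.

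Third, I would apply Egorov's theorem to this pointwise convergence to obtain uniform convergence outside a set of $\nu$-measure less than $\kappa/4$. This produces a measurable set $\Lambda_1'\subset\Sigma_k$ with $\nu(\Lambda_1')\ge 1-\kappa/4$ and an integer $n_1=n_1(\kappa,\varepsilon_H)$ such that both chains of inequalities hold uniformly on $\Lambda_1'$ for every $m\ge n_1$; setting $\Lambda_1\eqdef\varpi^{-1}(\Lambda_1')$ transfers everything to a set of $\mu$-measure at least $1-\kappa/4$. No step is genuinely obstructing: the proof is an assembly of classical ingredients, and the only care needed is the minor mismatch between the Bowen-ball cylinder $C_m(\xi)$ and the Shannon--McMillan cylinder $[\xi_0\ldots\xi_{m-1}]$, which is absorbed by the $\varepsilon_H/2$ slack for $m$ sufficiently large.
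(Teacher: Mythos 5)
Your proof is correct, and it takes a mildly different route from the paper's. The paper's proof is a three-line argument: apply the Brin--Katok theorem to get, $\mu$-a.e., the local-entropy limit for the Bowen balls $B_n(X,\varrho)$, apply the Shannon--McMillan--Breiman theorem to the projected measure $\nu=\varpi_\ast\mu$ to get the cylinder asymptotics, and then invoke Egorov; it treats the two displayed estimates as two separate almost-everywhere statements. You instead first identify the $(m,1)$-Bowen ball of $F$ with a base cylinder of length $m+O(1)$ times $\bS^1$ (using that $d_1(\sigma^i\xi,\sigma^i\eta)<1$ pins down finitely many coordinates around $i$ and that the fiber condition is vacuous at scale $1$), so that both estimates collapse to a single SMB statement for $(\Sigma_k,\sigma,\nu)$, followed by one application of Egorov and pulling back by $\varpi$. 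This buys you something the paper's phrasing glosses over: Brin--Katok is a statement about the double limit $\varrho\to0$ after $n\to\infty$, and at the \emph{fixed} radius $\varrho=1$ the upper bound $\mu(B_m(X,1))\le e^{-m(h(\mu)-\varepsilon_H/2)}$ does not follow from the theorem as stated (radius-$1$ Bowen balls are larger than small-radius ones); in this symbolic setting it is precisely your cylinder identification that justifies it, which is also implicit in the paper's remark that base $(n,1)$-Bowen balls are level-$n$ cylinders. The only points needing the care you already flag are the bounded coordinate mismatch (handled by shift-invariance of $\nu$ and the $\varepsilon_H/2$ slack for $m\ge n_1$) and the fact that $h(\nu)=h(\mu)$, which the paper records via~\eqref{eq:entropyproj}; with those in place your argument is complete.
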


\begin{proof}
By the Brin-Katok theorem~\cite{BriKat:83}, there is a set $\Lambda\subset\Sigma_k\times\bS^1$ with $\mu(\Lambda)=1$ so that every $X\in\Lambda$ satisfies
\[
	\lim_{\varrho\to0}\limsup_{n\to\infty}-\frac1n\log\mu(B_n(X,\varrho))
	=\lim_{\varrho\to0}\liminf_{n\to\infty}-\frac1n\log\mu(B_n(X,\varrho))
	=h(\mu).
\]
Analogously, for $\nu$-almost every $\xi\in\varpi(\Lambda)$
\[
	\limsup_{n\to\infty}-\frac1n\log\nu([\xi_0\ldots\xi_{n-1}])
	=\liminf_{n\to\infty}-\frac1n\log\nu([\xi_0\ldots\xi_{n-1}])
	=h(\nu)=h(\mu).
\]
Now apply the Egorov theorem.
\end{proof}

We now prove (1) in the proposition. We will only derive the conclusion for the  Lyapunov exponents, the one for the potentials $\varphi_j$ is completely analogous.
By ergodicity, for $\mu$-almost every $X=(\xi,x)$ we have
\[
	\lim_{n\to\infty}\frac1n\log\,\lvert (f^n_\xi)'(x)\rvert = \alpha.
\]
By the Egorov theorem, there is a set $\Lambda_2\subset\Sigma_k\times\bS^1$ of $\mu$-measure at least $1-\kappa/4$ and a number $n_2=n_2(\kappa,\varepsilon_E)$
 such that for every $X=(\xi,x)\in\Lambda_2$ and every $m \ge n_2$ we have
\begin{equation}\label{eq:asser}
	e^{m (\alpha-\varepsilon_E)}
	\le \lvert (f^m_\xi)'(x)\rvert
	\le e^{m (\alpha+\varepsilon_E)}.
\end{equation}
Let
\begin{multline*}
	K_0\eqdef\max_{n=0,\ldots,n_2-1}
	\max_{(\xi_0\dots \xi_{n-1})}
	\max_{x\in\bS^1}\\
		\Big\{\lvert (f_{[\xi_0\dots \xi_{n-1}]} )'(x)\rvert\,e^{-n(\alpha+\varepsilon_E)},
			\lvert (f_{[\xi_0\dots \xi_{n-1}]})'(x)\rvert^{-1}e^{n(\alpha-\varepsilon_E)}\Big\}.
\end{multline*}
With this choice, for every $X=(\xi,x)\in\Lambda_2$ for every $n=0,\ldots,n_2-1$ we have the assertion of (1) while for every $n\ge n_2$ we have~\eqref{eq:asser} proving (1).

To show (2), let now
\begin{equation}\label{defCCC}
	C\eqdef\min\Big\{\kappa,r,\frac{\mu(A)}{4}\Big\}
	\in(0,1).
\end{equation}

\begin{lemma}
	There exists a measurable set $\Lambda_3\subset\Sigma_k\times\bS^1$
	 of measure at least $1-\kappa/4$ and a number $n_3=n_3(\kappa,r,A)\ge 1$ such that for every $X\in\Lambda_3$ and $m \ge n_3$ we have
	\[
	\left\lvert
	\frac1m{\rm card}\big\{\ell\in\{0,\ldots,m-1\}\colon F^\ell(X)\in A\big\} -
	 \mu(A)\right\rvert \le C^2.
	\]
\end{lemma}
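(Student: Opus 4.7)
The plan is to prove the lemma as a straightforward application of the Birkhoff ergodic theorem combined with Egorov's theorem, exactly parallel to how the two previous parts of Proposition~\ref{pro:BriKat} were handled.

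First, I would apply the Birkhoff ergodic theorem to the indicator function $\chi_A$ of the measurable set $A$. Since $\mu$ is $F$-ergodic, there is a set $\Lambda_A\subset\Sigma_k\times\bS^1$ with $\mu(\Lambda_A)=1$ such that for every $X\in\Lambda_A$,
\[
	\lim_{m\to\infty}\frac{1}{m}\sum_{\ell=0}^{m-1}\chi_A(F^\ell(X))
	=\int\chi_A\,d\mu
	=\mu(A).
\]
Noting that $\sum_{\ell=0}^{m-1}\chi_A(F^\ell(X))=\card\{\ell\in\{0,\ldots,m-1\}\colon F^\ell(X)\in A\}$, this is precisely the Birkhoff average we need to control.

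Next, I would apply Egorov's theorem to the sequence of measurable functions
$g_m(X)\eqdef \frac{1}{m}\card\{\ell\in\{0,\ldots,m-1\}\colon F^\ell(X)\in A\}$,
which converges $\mu$-almost everywhere to the constant $\mu(A)$. Since the space has finite measure, Egorov's theorem produces a measurable set $\Lambda_3\subset\Lambda_A$ with $\mu(\Lambda_3)\ge 1-\kappa/4$ on which the convergence $g_m\to\mu(A)$ is uniform. In particular, using $C^2>0$ from~\eqref{defCCC} as our error tolerance, there exists $n_3=n_3(\kappa,r,A)\ge 1$ such that for every $X\in\Lambda_3$ and every $m\ge n_3$,
\[
	\lvert g_m(X)-\mu(A)\rvert \le C^2,
\]
which is the claimed estimate. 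The dependence of $n_3$ on $A$ enters through the set itself, while the dependence on $\kappa$ and $r$ enters through $C$ and the $1-\kappa/4$ measure threshold.

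There is no real obstacle here: this is exactly the same Birkhoff-plus-Egorov template used above to get~\eqref{eq:asser}, applied to the bounded measurable function $\chi_A$ instead of $\log\lvert(f_\xi)'(x)\rvert$ or the continuous $\varphi_j$. The only mild point to keep in mind is that $\chi_A$ need not be continuous, but Birkhoff's theorem and Egorov's theorem require only measurability and integrability, both of which hold trivially for an indicator of a measurable set in a probability space.
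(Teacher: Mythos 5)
Your proposal is correct and follows essentially the same argument as the paper: the authors likewise invoke the Birkhoff ergodic theorem for the visit frequencies to $A$ and then apply the Egorov theorem to extract $\Lambda_3$ and $n_3$. Your additional remark that only measurability of $\chi_A$ is needed is a fair clarification but does not change the argument.
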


\begin{proof}
By the Birkhoff theorem, there is a full measure set $\Lambda$ such that  for every $X\in\Lambda$ we have
\[
	\lim_{n\to\infty}\frac{1}{n}{\rm card}\big\{\ell\in\{0,\ldots,n-1\}
		\colon F^\ell(X)\in A\big\}
	=\mu(A).
\]	
Now apply the Egorov theorem.
\end{proof}

We can assume that $n_3$ has been chosen large enough such that
\begin{equation}\label{eq:formula}
	n_3\,r(\mu(A)-3C)>1
\end{equation}	
and thus for every $X\in\Lambda_3$ and every $n\ge n_3$
\[\begin{split}
	\card&\{\ell\colon n\le \ell < n(1+r),\,\, F^\ell(X)\in A\} \\
	&\ge
 	n(1+r)\big(\mu(A)-C^2\big)
	-(n-1)\mu(A)-(n-1)C^2\\
	&=
 	(nr+1)\mu(A)-(2n+nr-1)C^2\\
	&\ge
 	nr\big(\mu(A)-C^2\big)-2nC^2\\
\text{(with~\eqref{defCCC})}\,\,\,	
	&>
 	nr\big(\mu(A)-C\big)-2nr C\\
	&= nr(\mu(A)-3C) >1,
\end{split}\]	
where the last inequality follows from~\eqref{eq:formula}.

Now let $n_0=\max\{n_1,n_2,n_3\}$ and $\Lambda=\Lambda_1\cap\Lambda_2\cap\Lambda_3$. Assume also that for every $n\ge n_0$ we have
\begin{equation}\label{condition}
	n<e^{n\varepsilon_H/2}.
\end{equation}
Observe that $\mu(\Lambda_i)>1-\kappa/4$ for $i=1,2,3$ implies $\mu(\Lambda)>1-\kappa$.

Observe that the set $A\cap\Lambda$ consists of  points having orbits which start  and end in $A$, however which need possibly different number of iterations for that (between $n$ and $n+\lceil r n\rceil$).
We will now consider the separated subsets with \emph{equal} return time and 
select a subset with maximal cardinality having this property. 
In this way, the cardinality of the selected set is still comparable with entropy.
For each $\ell$ with $n\le \ell< n+\lceil r n\rceil+1$ let
\[
R_\ell\eqdef \left\{X_i\in \Lambda \colon F^\ell(X_i)\in A\right\}
\]
be the set of points which have the same time $\ell$ of return to $A$.

In order to obtain a large separated set of points with \emph{equal} return times we do the following construction. 
Pick an index $m\in\{n,\ldots,n+\lceil r n\rceil+1\}$  satisfying
\[
	\card R_m =\max_{n\le \ell<n+\lceil r n\rceil+1}\card R_\ell.
\]
Let
\[
	A'\eqdef A\cap \Lambda
	\quad\text{ and }\quad
	S'\eqdef \varpi(A').
\]	
and observe that $\nu(S')\ge\mu(A')\ge\mu(A)-\kappa>0$.
Choose any point $X_1=(\xi^1,x_1)\in A'$.
Let $S_1=S'\setminus [\xi^1_0\ldots\xi^1_{m-1}]$.
We continue inductively: choose any $\xi^\ell\in S_{\ell-1}$, let $S_\ell=S_{\ell-1}\setminus [\xi^\ell_0\ldots\xi^\ell_{m-1}]$, rinse and repeat. As by~\eqref{eq:formullanumeasure}
\[
	\nu(S_\ell) \geq \nu(S') - \ell e^{-m(h(\mu)-\varepsilon_H/2)},
\]
we can continue the procedure for at least $M$ steps, where
\begin{equation}\label{eq:Meq}
	M\ge \big\lceil\nu(S') \cdot e^{m(h(\mu)-\varepsilon_H/2)}\big\rceil
		\ge  \big(\mu(A)-\kappa\big) \cdot e^{m(h(\mu)-\varepsilon_H/2)}.
\end{equation}
By construction, the resulting set of sequences $\{\xi^1, \ldots,\xi^M\}\subset S'$
is $(m,1)$-separated set (with respect to $\sigma\colon\Sigma_k\to\Sigma_k$).
For every sequence $\xi^i$ there exists a point $X_i\in A'$ with $X_i=(\xi^i,x_i)$ for some $x_i\in\bS^1$. Note that the set 
$\{X_1,\ldots,X_M\}$ 
is $(m,1)$-separated (with respect to $F$).

With~\eqref{condition} we have $nr <n<e^{n\varepsilon_H/2}$ and hence with~\eqref{eq:Meq} we obtain
\[
	\card R_m
	\ge \frac{M}{nr}
	\ge \big(\mu(A)-\kappa\big)\cdot e^{m(h(\mu)-\varepsilon_H/2)}e^{-m\varepsilon_H/2}
	= \big(\mu(A)-\kappa\big)\cdot e^{m(h(\mu)-\varepsilon_H)}.
\]
This proves item (2) and completes the proof of the proposition.
\end{proof}

\subsection{Distortion}

We will need some auxiliary distortion results. They include, in particular, distortion in a neighborhood of orbits with zero fiber Lyapunov exponent.

Given a set $Z\subset\bS^1$ and a differentiable map $g$ on $Z$, we denote by
\[
	\dist g|_Z
	\eqdef \sup_{x,y\in Z}\frac{\lvert g'(x)\rvert}{\lvert g'(y)\rvert}
\]
the \emph{maximal distortion} of $g$ on $Z$.
Given $\delta>0$, we consider the \emph{modulus of continuity} of the function $\log\,\lvert g'\rvert$ defined by
\[
	\Mod (\log\,\lvert g'\rvert,\delta,x)
	\eqdef \max\big\{
		\big\lvert \log\,\lvert g'(y)\rvert - \log\,\lvert g'(x)\rvert\big\rvert
			\colon \lvert y-x\rvert\le \delta\big\}.
\]

Considering the IFS $\{f_i\}$, let
\begin{equation}\label{eq:modulos}
	\Mod(\delta)
	\eqdef \max_{i=0,\ldots,k-1}\,\max_{x\in\bS^1}\,\Mod (\log \, |f_i'|,\delta,x).
\end{equation}
Clearly, $\Mod(\delta)\to0$ as $\delta\to0$.

\begin{proposition}[Distortion]
	Consider a skew-product map
 $F$ as in \eqref{eq:sp} whose fiber maps are $C^1$.
	Given $\varepsilon_D>0$, choose $\delta_0>0$ such that $\Mod(2\delta_0)\le \varepsilon_D$.
	Assume that $(\xi,x)\in\Sigma_k\times\bS^1$ is such that 
	there are $r>0$ and $m\ge1$ such that for every $\ell=0,\ldots,m-1$ we have
\[
	\lvert (f_\xi^\ell)'(x)\rvert<\frac1r\delta_0e^{-\ell\varepsilon_D}.
\]	

Then for every $\ell\in\{0,\ldots,m\}$  we have
\[
	\big\lvert\log\dist f_\xi^\ell |_{[x-r,x+r]}\big\rvert \le \ell\varepsilon_D\,.
\]	
\end{proposition}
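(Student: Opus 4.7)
The plan is a standard bounded distortion argument by induction on $\ell$, where the inductive hypothesis simultaneously controls the distortion and, via the hypothesis on $|(f_\xi^\ell)'(x)|$, the diameter of the image of $[x-r,x+r]$ under $f_\xi^\ell$.

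First I would check the base case $\ell=0$: here $f_\xi^0=\mathrm{id}$, so $\dist f_\xi^0|_{[x-r,x+r]}=1$ and the bound $0\le0$ holds trivially. For the inductive step, assume the conclusion holds for some $\ell\in\{0,\ldots,m-1\}$. The first task is to bound the diameter of $f_\xi^\ell([x-r,x+r])$. By the inductive distortion estimate, for every $y\in[x-r,x+r]$,
\[
	|(f_\xi^\ell)'(y)|\le |(f_\xi^\ell)'(x)|\,e^{\ell\varepsilon_D},
\]
so, using the hypothesis on $|(f_\xi^\ell)'(x)|$ and the mean value theorem,
\[
	|f_\xi^\ell(y)-f_\xi^\ell(x)|
	\le r\cdot\frac{1}{r}\delta_0 e^{-\ell\varepsilon_D}\cdot e^{\ell\varepsilon_D}
	= \delta_0.
\]
Hence $f_\xi^\ell([x-r,x+r])$ is contained in an interval of length at most $2\delta_0$ centered at $f_\xi^\ell(x)$.

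The second task is to upgrade this to distortion at time $\ell+1$. For any $y,z\in[x-r,x+r]$, the chain rule gives
\[
	\log\frac{|(f_\xi^{\ell+1})'(y)|}{|(f_\xi^{\ell+1})'(z)|}
	= \log\frac{|(f_\xi^\ell)'(y)|}{|(f_\xi^\ell)'(z)|}
	+ \log\frac{|f_{\xi_\ell}'(f_\xi^\ell(y))|}{|f_{\xi_\ell}'(f_\xi^\ell(z))|}.
\]
The first term is bounded in absolute value by $\ell\varepsilon_D$ by the inductive hypothesis. For the second term, since $|f_\xi^\ell(y)-f_\xi^\ell(z)|\le 2\delta_0$ by the previous paragraph, the definition of $\Mod$ in \eqref{eq:modulos} together with our choice $\Mod(2\delta_0)\le\varepsilon_D$ gives
\[
	\Bigl|\log\frac{|f_{\xi_\ell}'(f_\xi^\ell(y))|}{|f_{\xi_\ell}'(f_\xi^\ell(z))|}\Bigr|
	\le \Mod(2\delta_0)\le \varepsilon_D.
\]
Taking the supremum over $y,z$ yields $|\log\dist f_\xi^{\ell+1}|_{[x-r,x+r]}|\le(\ell+1)\varepsilon_D$, closing the induction.

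There is no serious obstacle; the only subtlety is that the inductive hypothesis is used twice, once to control the \emph{image} of the interval (so that one may apply the modulus-of-continuity bound at step $\ell+1$) and once to control the \emph{distortion ratio} already accumulated. The exponential factor $e^{-\ell\varepsilon_D}$ in the hypothesis on $|(f_\xi^\ell)'(x)|$ is chosen precisely to absorb the factor $e^{\ell\varepsilon_D}$ coming from the inductive distortion bound, ensuring the image stays in the $\delta_0$-window on which the modulus of continuity is controlled.
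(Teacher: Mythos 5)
Your proof is correct and follows essentially the same route as the paper: induction on $\ell$, using the inductive distortion bound together with the derivative hypothesis to confine $f_\xi^\ell([x-r,x+r])$ to an interval of length at most $2\delta_0$, and then the modulus-of-continuity bound $\Mod(2\delta_0)\le\varepsilon_D$ plus the chain rule to close the induction. No issues to report.
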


\begin{proof}
	Let $Z=B(x,r)$.
	The proof is by (finite) induction on $\ell$. Note that the claim holds for $\ell=0$. Suppose that the claim holds for $\ell=i$. This means that we have $\lvert\log \dist f_\xi^i|_Z\rvert\le i\varepsilon_D$, which by the hypothesis of the proposition implies that
\[
	\lvert f_\xi^i(Z)\rvert \le
	 \frac{1}{r} \delta_0 e^{-i\varepsilon_D}\cdot e^{i\varepsilon_D}
	 \cdot \lvert Z\rvert 
	= 
	2\delta_0.
\]	
Hence $\lvert\log\dist f|_{f_\xi^i(Z)}\rvert\le \varepsilon_D$. Now the chain rule implies $\lvert\log\dist f_\xi^{i+1}|_Z\rvert\le (i+1)\varepsilon_D$ which is the claim for $i+1$. This proves the proposition.
\end{proof}

In a similar manner the following result can be shown.

\begin{corollary}[Distortion for zero exponents]\label{cor:distortionC1}
	Consider a skew-product map
 $F$ as in \eqref{eq:sp} whose fiber maps  are $C^1$.
	Given $\varepsilon_D>0$, choose $\delta_0>0$ such that $\Mod(2\delta_0)\le \varepsilon_D$. Given $\varepsilon\in(0,1)$, $m\ge1$, $K_0>0$, and $(\xi,x)\in\Sigma_k\times\bS^1$ satisfying for all $\ell\in\{0,\ldots,m\}$
\[
	\lvert (f_\xi^\ell)'(x)\rvert \le K_0e^{\ell\varepsilon}.
\]	
	
Then with $Z= B(x,\delta_0 K_0^{-1}e^{-m(\varepsilon+\varepsilon_D)})$
 for every $\ell\in\{0,\ldots,m\}$ 
 we have
\[
	\left\lvert\log\dist f_\xi^\ell|_Z\right\rvert \le \ell\varepsilon_D\,.
\]	
\end{corollary}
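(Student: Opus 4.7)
The plan is to mirror the inductive argument used in the preceding Proposition essentially verbatim, only adjusting the radius of the reference ball to account for the fact that here the derivative norms along the orbit of $x$ are allowed to grow mildly (at rate $e^{\ell\varepsilon}$) rather than decay. The key observation driving the choice of $Z$ is that, in the previous proof, the crucial estimate was that the length of $f_\xi^\ell(Z)$ stays bounded by $2\delta_0$ for every $\ell \le m$, because then the chosen $\delta_0$ (governed by the modulus of continuity of $\log|f_i'|$) forces an additional distortion contribution of at most $\varepsilon_D$ per iteration. I will set up the radius of $Z$ precisely so that this same bound holds.

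Concretely, set $r \eqdef \delta_0 K_0^{-1} e^{-m(\varepsilon + \varepsilon_D)}$ and $Z = B(x,r)$. I will prove by finite induction on $\ell \in \{0,\ldots,m\}$ that $|\log \dist f_\xi^\ell|_Z| \le \ell\varepsilon_D$. The base case $\ell=0$ is trivial. Assuming the claim for some $\ell < m$, the induction hypothesis combined with the derivative bound $|(f_\xi^\ell)'(x)| \le K_0 e^{\ell\varepsilon}$ gives
\[
	|f_\xi^\ell(Z)|
	\le |(f_\xi^\ell)'(x)| \cdot e^{\ell\varepsilon_D} \cdot 2r
	\le 2K_0 e^{\ell(\varepsilon+\varepsilon_D)} \cdot \delta_0 K_0^{-1} e^{-m(\varepsilon+\varepsilon_D)}
	\le 2\delta_0,
\]
using $\ell \le m$ in the last step.

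Once $|f_\xi^\ell(Z)| \le 2\delta_0$, the definition~\eqref{eq:modulos} of $\Mod$ together with the choice of $\delta_0$ guarantees $|\log \dist f_{\xi_\ell}|_{f_\xi^\ell(Z)}| \le \Mod(2\delta_0) \le \varepsilon_D$. The chain rule $\dist(g \circ h)|_Z \le \dist g|_{h(Z)} \cdot \dist h|_Z$ then upgrades the inductive bound to $|\log \dist f_\xi^{\ell+1}|_Z| \le (\ell+1)\varepsilon_D$, closing the induction. Since the argument is a direct adaptation of the preceding proposition and the only new ingredient is the bookkeeping in the choice of $r$, there is no genuine obstacle; the main point to verify carefully is simply that the exponent $-m(\varepsilon+\varepsilon_D)$ in the radius is exactly what is needed to absorb both the derivative growth $e^{\ell\varepsilon}$ at $x$ and the worst-case distortion factor $e^{\ell\varepsilon_D}$ along the orbit uniformly for all $\ell \le m$.
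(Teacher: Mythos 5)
Your proposal is correct and follows exactly the route the paper intends: the paper omits the proof, noting only that it is obtained ``in a similar manner'' to the preceding Distortion Proposition, and your induction with the radius $\delta_0 K_0^{-1}e^{-m(\varepsilon+\varepsilon_D)}$ chosen to keep $\lvert f_\xi^\ell(Z)\rvert\le 2\delta_0$ is precisely that adaptation. The bookkeeping step $\lvert f_\xi^\ell(Z)\rvert \le \lvert (f_\xi^\ell)'(x)\rvert\, e^{\ell\varepsilon_D}\cdot 2r \le 2\delta_0$ for $\ell\le m$ is the only new ingredient and you verify it correctly.
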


We now provide one more distortion result. It is more specific to our step skew-product axiomatic setting, and not  in the general $C^1$ setting as above. We show that Axiom Acc$- (J$) allows us to strengthen Axiom CEC$+(J$) in the following way.

\begin{lemma}\label{lem:disaxicec}
Consider a skew-product map
 $F$ as in \eqref{eq:sp} whose fiber maps  are $C^1$.
Assume that there is a closed interval $J\subset\bS^1$ such that Axiom CEC$+(J$) is satisfied with constants $K_1,\ldots,$ $K_5$ and that Axiom Acc$-(J$) is satisfied.

Then for every $\varepsilon_D>0$ there exist positive constants $K_3'$ and $K_D$ such that for every interval $H\subset\bS^1$ intersecting $J$ and satisfying $\lvert H\rvert<K_1$ we have
\begin{itemize}
\item (controlled covering) there exists some finite sequence $(\xi_0 \ldots \xi_{\iota-1})$ for some positive integer $\iota \leq K_2 \lvert\log\, \lvert H\rvert\rvert + K_3'$ such that
\[
	f_{[\xi_0\ldots\, \xi_{\iota-1}]}(H) \supset B(J, K_4),
\]
\item (controlled distortion) we have
\[
	\log \dist f_{[\xi_0\ldots\,\xi_{\iota-1}]}|_ H
	\leq \lvert\log\, \lvert H\rvert\rvert \cdot \varepsilon_D + \log K_D.
\]
\end{itemize}
\end{lemma}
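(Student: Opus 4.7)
The plan is to decompose the desired composition into three phases: first, apply a truncated version of the sequence provided by Axiom CEC$+(J)$ to expand $H$ until its image first exceeds a small pre-chosen scale $\delta^*$; second, use Axiom Acc$-(J)$ to reposition this image so that it intersects $J$ again; third, apply Axiom CEC$+(J)$ once more to this repositioned interval to obtain the desired covering of $B(J,K_4)$. The key point is that only the first phase produces a length proportional to $\lvert\log\lvert H\rvert\rvert$, but during that phase the intermediate image sizes stay within the scale $\delta^*$, where the modulus of continuity can be made as small as we like, whereas phases two and three involve only a bounded number of iterations and hence only a bounded amount of distortion.

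More precisely, given $\varepsilon_D>0$, choose $\delta^*>0$ so that $\Mod(\delta^*)\le \varepsilon_D/K_2$, $\delta^*<K_4$, and $\delta^*\lVert F\rVert^{m_{\rm b}+1}<K_1$, where $m_{\rm b}$ is the constant of Lemma~\ref{lem:idiota} associated with Axiom Acc$-(J)$. If $\lvert H\rvert\ge\delta^*$ the conclusion follows immediately from Axiom CEC$+(J)$ itself (since $\lvert\log\lvert H\rvert\rvert$ is then bounded), so assume $\lvert H\rvert<\delta^*$. Apply CEC$+(J)$ to $H$ to obtain a sequence $(\eta_0\ldots\eta_{\ell-1})$ with $\ell\le K_2\lvert\log\lvert H\rvert\rvert+K_3$, write $H_j\eqdef f_{[\eta_0\ldots\eta_{j-1}]}(H)$, and let $\tau$ be the smallest index with $\lvert H_\tau\rvert>\delta^*$. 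Such a $\tau$ exists and satisfies $\tau\le\ell$, since $\lvert H_\ell\rvert\ge K_4>\delta^*$. Because $\lvert H_j\rvert\le\delta^*$ for every $j<\tau$, the chain rule for distortion yields
\[
	\log\dist f_{[\eta_0\ldots\eta_{\tau-1}]}\big|_H
	\le \sum_{j=0}^{\tau-1}\Mod(\lvert H_j\rvert)
	\le \tau\,\Mod(\delta^*)
	\le \varepsilon_D\lvert\log\lvert H\rvert\rvert+(K_3/K_2)\varepsilon_D.
\]

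For the second phase, pick any $x_\tau\in H_\tau$ and invoke Lemma~\ref{lem:idiota} to obtain a sequence $(\beta_1\ldots\beta_s)$ of length $s\le m_{\rm b}$ with $f_{[\beta_1\ldots\beta_s]}(x_\tau)\in J$. The image $H_\tau'\eqdef f_{[\beta_1\ldots\beta_s]}(H_\tau)$ then meets $J$ and has length in $[\delta^*\lVert F\rVert^{-m_{\rm b}},\delta^*\lVert F\rVert^{m_{\rm b}+1}]\subset(0,K_1)$. The third phase applies CEC$+(J)$ to $H_\tau'$, producing a sequence $(\mu_0\ldots\mu_{\ell'-1})$ whose image covers $B(J,K_4)$ and whose length $\ell'$ is bounded by a constant $K_3''$ depending only on $\delta^*$, $\lVert F\rVert$, and the axiom constants. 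Since phases two and three together involve only $s+\ell'\le m_{\rm b}+K_3''$ iterations acting on intervals of bounded length, they contribute only a uniformly bounded amount of distortion. Concatenating, the overall sequence has length
\[
	\iota=\tau+s+\ell'\le K_2\lvert\log\lvert H\rvert\rvert+K_3'
	\quad\text{with}\quad K_3'\eqdef K_3+m_{\rm b}+K_3'',
\]
its image covers $B(J,K_4)$, and by the chain rule the total distortion on $H$ satisfies the required bound $\le \varepsilon_D\lvert\log\lvert H\rvert\rvert+\log K_D$ for a suitable constant $K_D$ depending only on $\varepsilon_D$ and the axiom constants.

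The main obstacle is the mismatch between expansion and distortion: Axiom CEC$+(J)$ gives enough expansion to cover $B(J,K_4)$ but says nothing about intermediate image sizes, whereas bounded distortion requires these sizes to remain small. The resolution is to halt the CEC$+(J)$ sequence exactly at the first step $\tau$ where its image leaves the scale $\delta^*$. At that moment the accumulated distortion is well controlled, but the truncated image $H_\tau$ is no longer anchored near $J$; this is precisely where Axiom Acc$-(J)$ is needed, allowing us to reposition back into $J$ in uniformly bounded time and at the cost of only constant distortion, so that CEC$+(J)$ can be reapplied to finish the covering.
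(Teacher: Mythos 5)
Your proof is correct and follows essentially the same route as the paper: truncate the CEC$+(J)$ sequence at the first time the image exceeds a scale $\delta$ chosen with $\Mod(\delta)\le\varepsilon_D/K_2$, use Acc$-(J)$ via Lemma~\ref{lem:idiota} to bring the image back onto $J$ in at most $m_{\rm b}$ steps, and reapply CEC$+(J)$ for a bounded number of further steps, with the bounded phases absorbed into $K_3'$ and $\log K_D$. Your extra precautions (the case $\lvert H\rvert\ge\delta^*$ and the condition $\delta^*\lVert F\rVert^{m_{\rm b}+1}<K_1$ guaranteeing CEC$+(J)$ applies to the repositioned interval) are minor refinements of, not departures from, the paper's argument.
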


\begin{proof}
Recall the definition of modulus of continuity $\Mod (\cdot) $ in~\eqref{eq:modulos} and, given $\varepsilon_D$, fix $\delta>0$ so that
\[
	\delta<K_1
	\quad\text{ and }\quad
	\Mod(\delta)<\frac{\varepsilon_D}{K_2}.
\]

We fix an interval $H$ as in the hypothesis and consider the corresponding finite sequence $(\eta_0\ldots\eta_{\ell-1})$ provided by Axiom CEC$+(J)$. Let $t\in\{1,\ldots,\ell\}$ be the smallest integer  when
\[
	\lvert H'\rvert  > \delta,
	\quad\text{ where }\quad
	H' \eqdef f_{[\eta_0\ldots\,\eta_{t-1}]}(H).
\]
By Axiom CEC$+(J$) we have
\[
	t \leq \ell \leq K_2\, \lvert\log \,\lvert H\rvert\rvert + K_3.
\]	

Assuming Acc$-(J)$, by Lemma~\ref{lem:idiota}, there is a universal number $m_{\rm b}$ (depending only on $J$) such that there is a finite sequence $(\beta_1\ldots\beta_s)$, $s\le m_{\rm b}$, such that
\[
	H''
	\cap J \neq \emptyset,
	\quad\text{ where }\quad
	H''\eqdef f_{[\beta_1\ldots\,\beta_s]}(H').
\]
By the definition of the universal constant $\D$ in~\eqref{def:cunifconst}, we have
\[
	 |H''| \geq \D^{-m_{\rm b}} \cdot |H'| > \D^{-m_{\rm b}} \cdot \delta.
\]

Finally, we apply Axiom CEC$+(J)$ to the interval $H''$ we obtain a finite sequence $(\overline\eta_0\ldots\overline\eta_{r-1})$ with $r \leq K_2 \,\lvert\log \,\lvert H''\rvert\rvert + K_3$ for which
\[
	f_{[\overline\eta_0\ldots\,\overline\eta_{r-1}]}(H'') \supset J.
\]

Define now the finite sequence
\[
	(\xi_0\ldots\xi_{\iota-1})
	\eqdef (\eta_0\ldots\eta_{t-1}
		\beta_1\ldots\,\beta_s\overline\eta_0\ldots\,\overline\eta_{r-1}).
\]		
We have
\[\begin{split}
	\iota
	&= t+s+r\\
	&\leq (K_2 |\log |H|| + K_3) + m_{\rm b} +(K_2 \,\lvert\log \,\lvert H''\rvert\rvert + K_3)\\
	&\leq (K_2 |\log |H|| + K_3) + m_{\rm b}
		+ (K_2 m_{\rm b}\log  \D
			+ K_2 \lvert\log \delta\rvert + K_3) \\
	&= K_2 |\log |H|| + K_3',
\end{split}\]
where $K_3'=K_3'(\delta)$ is the sum of the above remaining constants.
This completes the proof of the first part of the lemma (controlled covering).

To get the control of the distortion 
note  that the previous estimate  shows  $s+r\le K_3'$.
Recalling again the choice of $\delta$ and $t$, we get
\[
	\log \dist f_{[\xi_0\ldots\,\xi_{\iota-1}]}|_H
	\leq t\, \frac{\varepsilon_D}{K_2} + (s+r)\log \, \lVert F \rVert
	\le \lvert\log \,\lvert H\rvert \rvert \cdot \varepsilon_D + \frac{K_3}{K_2}\varepsilon_D+
	(s+r)\log \, \lVert F \rVert.
\]
Letting $K_D=K_D(\delta)\eqdef  \frac{K_3}{K_2}\varepsilon_D+ (s+r)\log \, \lVert F \rVert$, this shows the lemma.
\end{proof}

\section{Skeletons}\label{sec:skeletons}

The systems we consider provide  ``skeletons" of the dynamics, that is, orbit pieces approximating dynamical properties such as entropy and fiber exponent. These skeletons will serve as building pieces to construct transitive hyperbolic sets which are, in a certain sense, dynamically and ergodically homogeneous. Here, these orbit pieces will approximate either certain invariant sets (Skeleton property) or certain invariant measures (Skeleton$\ast$ property), respectively.
Compare Figure~\ref{fig.iteratesd}. Throughout this section we continue to consider
a skew-product map $F$ as in \eqref{eq:sp}.

\begin{definition}[Skeleton property]
	Given an interval $J\subset\bS^1$ and numbers $h\ge0$ and $\alpha\ge0$, we say that $F$ has the \emph{Skeleton property relative to $J$, $h$, and $\alpha$} if there exist $m_{\rm b},m_{\rm f}\in\bN$ (\emph{connecting times}) such that for any $\varepsilon_H\in(0,h)$ and $\varepsilon_E>0$ there exist $K_0,L_0\ge1$, and $n_0\ge1$ such that for every $m\ge n_0$ there exists a finite set $\fX=\fX(h,\alpha,\varepsilon_H,\varepsilon_E,m)=\{X_i\}$ of points $X_i =(\xi^i,x_i)$ (\emph{Skeleton}) satisfying:
\begin{itemize}
\item[(i)] the set $\fX$ has cardinality
\[
	\card\fX
	\ge L_0^{-1}e^{m(h-\varepsilon_H)},
\]
\item[(ii)] the sequences $(\xi^i_0\ldots\xi^i_{m-1})$  are all different,
\item[(iii)] for every $n=0,\ldots,m$
\[
	K_0^{-1} e^{n(\alpha-\varepsilon_E)}
	\le \lvert (f_{[\xi^i_0\ldots\,\xi^i_{n-1}]})'(x_i)\rvert
	\le K_0 e^{n(\alpha+\varepsilon_E)}.
\]	
\end{itemize}
Moreover, there are sequences $(\theta^i_1\ldots\theta^i_{r_i})$, $r_i\le m_{\rm f}$, and $(\beta^i_1\ldots\beta^i_{s_i})$, $s_i\le m_{\rm b}$, and points $x_i'\in J$ such that  for every $i$ we have
\begin{itemize}
\item[(iv)] $f_{[\theta^i_1\ldots\,\theta^i_{r_i}]}(x_i')=x_i$,
\item[(v)] $f_{[\xi^i_0\ldots\,\xi^i_{m-1}\beta^i_1\ldots\,\beta^i_{s_i}]}(x_i)\in J$.
\end{itemize}
\end{definition}

\begin{figure}
\begin{minipage}[h]{\linewidth}
\centering
 \begin{overpic}[scale=.50]{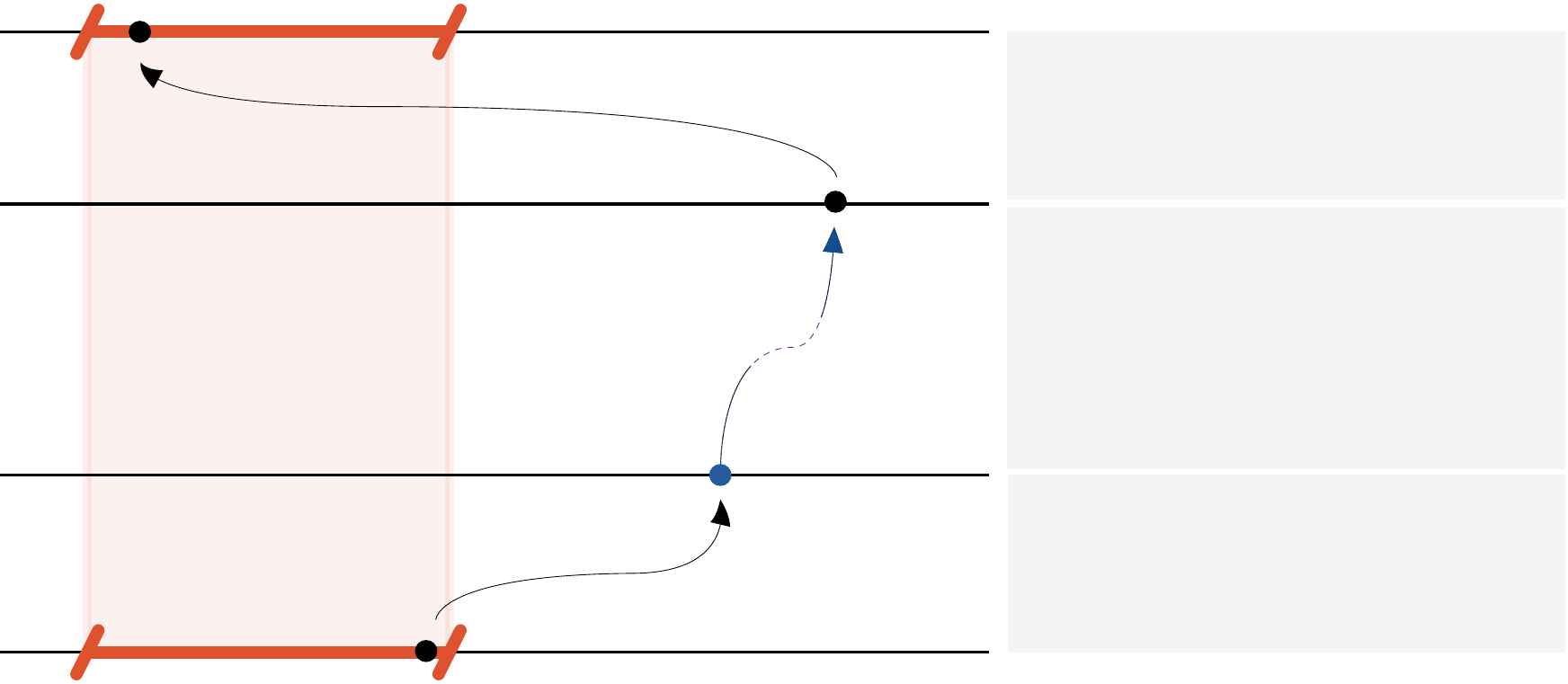}
 			\put(-20,7){\small$f_{[\theta^i_1\ldots\,\theta^i_{r_i}]}$}
 			\put(-20,20){\small$f_{[\xi^i_0\ldots\,\xi^i_{m-1}]}$}
 			\put(-20,35){\small$f_{[\beta^i_1\ldots\,\beta^i_{s_i}]}$}
  			\put(26,-3){\small$x_i'$}
  			\put(41,9.5){\small$x_i$}
  			\put(66,23){\small$(m,1)$-separated}
			\put(66,19){\small orbit pieces}
  			\put(-4,1){\small$\bS^1$}
  			\put(6,-3){\small\textcolor{red}{$J$}}
  \end{overpic}
\caption{Skeleton property}
\label{fig.iteratesd}
\end{minipage}
\end{figure}

\begin{definition}[Skeleton$\ast$ property]
Given an interval $J\subset\bS^1$ and a measure $\mu\in\cM_{\rm erg}$, let $h=h(\mu)$ and $\alpha=\chi(\mu)$. We say that $F$ has the \emph{Skeleton property relative to $J$ and $\mu$} if for every finite family of continuous potentials $\varphi_1,\ldots, \varphi_\ell\colon\Sigma_k\times\bS^1\to\bR$  the Skeleton property relative to $J,h$, and $\alpha$ holds true  which in addition satisfies the following property:
\begin{itemize}
\item[(vi)] for every $j=1,\ldots,\ell$
\[
	-K_0 + m(\overline\varphi_j-\varepsilon_E)
	\le \sum_{k=0}^{m-1}\varphi_j\big(F^k(X_i)\big)
	\le K_0 + m(\overline\varphi_j+\varepsilon_E)	,
\]
where $\overline\varphi_j=\int\varphi_j\,d\mu$,
\end{itemize}
with the respective quantifiers.
\end{definition}

In Section~\ref{ss.existenceofskeletons} we will prove that our axioms imply the existence of
skeletons.

\subsection{Skeleton-based hyperbolic sets}

In this section we see that the Axioms CEC$\pm$  together with the Skeleton property provide transitive hyperbolic sets with quite homogeneous properties. The construction of these sets will be based on the so-called multi-variable-time horseshoes built close to skeleton-orbit pieces. These horseshoes will be defined in Section~\ref{sec:mulvarhor}.

\begin{theorem}\label{teo:PES1}
	Consider a transitive skew-product map $F$ as in~\eqref{eq:sp} whose fiber maps are $C^1$. Assume that it satisfies Axiom CEC$+(J)$ for some closed interval $J$ and has the Skeleton property relative to the interval $J$ and some numbers $h>0$
	 and $\alpha\ge0$.
	
	Then for every $\gamma\in(0,h)$ and every $\lambda>0$ there is a compact $F$-invariant topologically transitive hyperbolic set  $\widehat\Gamma\subset\Sigma_k\times\bS^1$ such that
\begin{enumerate}
\item its topological entropy with respect to $F$ satisfies $h_{\rm top}(F,{\widehat\Gamma})\in [h-\gamma,h+\gamma]$ and
\item for every $\nu\in\cM_{\rm erg}({\widehat\Gamma})$ we have $\chi(\nu)\in(\alpha-\lambda,\alpha+\lambda)\cap\bR_+$.
\end{enumerate}
\end{theorem}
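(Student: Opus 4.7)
The plan is to build $\widehat\Gamma$ as the invariant set arising from freely concatenating ``augmented'' skeleton orbit pieces, bridging consecutive skeleton blocks via the Axiom~CEC$+(J)$ covering mechanism. This is precisely the multi-variable-time horseshoe construction to be formalized in Section~\ref{sec:mulvarhor}. Fix small auxiliary $\varepsilon_H\in(0,\gamma)$, $\varepsilon_E\in(0,\lambda/4)$, and distortion parameter $\varepsilon_D>0$. For $m$ sufficiently large, the Skeleton property yields a family $\fX=\{X_i\}_{i=1}^N$ with $X_i=(\xi^i,x_i)$ and $N\ge L_0^{-1}e^{m(h-\varepsilon_H)}$, together with short connecting words $(\theta^i_1\ldots\theta^i_{r_i})$ and $(\beta^i_1\ldots\beta^i_{s_i})$ and anchoring points $x'_i\in J$ as in items (iv)--(v). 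For each $i$, set $z_i\eqdef f_{[\beta^i_1\ldots\,\beta^i_{s_i}]}\circ f_{[\xi^i_0\ldots\,\xi^i_{m-1}]}(x_i)\in J$.

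Using the upper estimate on $\lvert(f^n_{\xi^i})'(x_i)\rvert$ together with Corollary~\ref{cor:distortionC1}, I select a neighborhood $\widehat H_i$ of $x_i$ of radius of order $e^{-m(\alpha+\varepsilon_E+\varepsilon_D)}$, so that the prefix $f_{[\xi^i_0\ldots\,\xi^i_{m-1}\beta^i_1\ldots\,\beta^i_{s_i}]}$ has distortion bounded by $e^{m\varepsilon_D}\D^{m_{\rm b}}$ on $\widehat H_i$ and its image $H_i$ is an interval intersecting $J$ with $\lvert H_i\rvert<K_1$. Axiom~CEC$+(J)$ applied to $H_i$ supplies a word $(\rho^i_0\ldots\rho^i_{\iota_i-1})$ with $\iota_i\le K_2\lvert\log\lvert H_i\rvert\rvert+K_3$ that uniformly expands $H_i$ at rate $K_5$ onto $B(J,K_4)\supset J\supset\{x'_j\colon j=1,\ldots,N\}$. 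Appending the short sequence $(\theta^j_1\ldots\theta^j_{r_j})$ which sends $x'_j$ to $x_j$, I obtain a word $w^{ij}$ of length $n_{ij}=m+s_i+\iota_i+r_j$ that maps a subinterval $\widehat H_i^j\subset\widehat H_i$ onto a neighborhood of $x_j$ containing $\widehat H_j$. Free concatenation of labels then produces a Markov system conjugate to a full shift on $N$ symbols; the standard graph-transform/shadowing argument yields the compact $F$-invariant hyperbolic set $\widehat\Gamma$, topologically transitive thanks to the full shift factor.

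For the entropy, since the return times satisfy $n_{ij}\le\overline n\eqdef m+m_{\rm b}+\iota_{\max}+m_{\rm f}$ with $\iota_{\max}$ bounded independently of $m$, I get
\[
h_{\rm top}(F,\widehat\Gamma)\ge\frac{\log N}{\overline n}\ge\frac{m(h-\varepsilon_H)-\log L_0}{m+m_{\rm b}+\iota_{\max}+m_{\rm f}},
\]
which exceeds $h-\gamma$ for $m$ large. The matching upper bound follows from pairwise distinctness of $(\xi^i_0\ldots\xi^i_{m-1})$ together with a counting of the coding. For any $\nu\in\cM_{\rm erg}(\widehat\Gamma)$, the Birkhoff average of $\log\lvert f'_{\xi_0}\rvert$ over one cycle $w^{ij}$ is trapped between
\[
\frac{-\log K_0+m(\alpha-\varepsilon_E)+\iota_iK_5-(s_i+r_j)\log\D}{n_{ij}}
\quad\text{and}\quad
\frac{\log K_0+m(\alpha+\varepsilon_E)+(\iota_i+s_i+r_j)\log\D}{n_{ij}},
\]
and both bounds converge to $\alpha$ with error $\varepsilon_E+O(1/m)$. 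The strictly positive contribution $\iota_iK_5>0$ from the covering portion guarantees $\chi(\nu)>0$ even when $\alpha=0$, once $\varepsilon_E$ is small compared with $\iota_iK_5/m$.

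The main obstacle is the scale-matching problem: one must simultaneously keep each $\widehat H_i$ small enough that Corollary~\ref{cor:distortionC1} applies over the entire length $n_{ij}$ of a cycle, yet large enough that the preimages $\widehat H_i^j$ consistently nest to cover each required $\widehat H_j$. This balance is delicate, particularly in the nonhyperbolic regime $\alpha=0$, where distortion control must be derived from upper bounds on $\lvert(f_\xi^n)'(x)\rvert$ alone and where the positivity of $\chi(\nu)$ hinges entirely on the CEC$+$ covering rate $K_5$ dominating the skeleton's $\varepsilon_E$-deviation. Achieving this balance in a uniform way across all $i,j$ is precisely the technical role played by the multi-variable-time horseshoe machinery of Section~\ref{sec:mulvarhor}, which I will invoke to package the above construction into the desired hyperbolic set.
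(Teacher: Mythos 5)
Your construction follows the paper's own route (skeleton blocks, CEC$+$ covering back into $J$, distortion control via Corollary~\ref{cor:distortionC1}, then a multi-variable-time horseshoe), but two steps are not justified as written. The more serious one is the claim that ``free concatenation of labels produces a Markov system conjugate to a full shift on $N$ symbols'' with $h_{\rm top}(F,\widehat\Gamma)\ge \log N/\overline n$. Because the transition times $n_{ij}=m+s_i+\iota_i+r_j$ vary with $(i,j)$, distinct label sequences need not produce orbits that are separated at a common iterate: after the first disagreement the two itineraries are out of sync, the preimage rectangles $S_{ij}$, $S_{i\ell}$ need not be disjoint, and the naive count can overestimate the entropy. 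The paper's two-point example (footnote to Remark~5.4: $t_{11}=t_{22}=2$, $t_{12}=t_{21}=1$, entropy $0$, not $\tfrac12\log 2$) shows the bound $\log M/t_{\rm max}$ is genuinely false for unrestricted concatenation. This is exactly why Proposition~\ref{prop:wilvarhorse} first selects, by pigeonhole, a single exit time $t(i)$ per rectangle and restricts to the resulting subshift of finite type, paying a harmless $\log(t_{\rm max}-t_{\rm min}+1)$ in the numerator. Since you say you will invoke Section~\ref{sec:mulvarhor}, the fix is available, but then your entropy lower bound must be the SFT bound of Proposition~\ref{prop:wilvarhorse}, not the full-shift count; you also need transitivity of (a component of) the SFT rather than ``the full shift factor''.

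The second gap concerns positivity of $\chi(\nu)$ when $\alpha=0$. With your radius choice of order $e^{-m(\alpha+\varepsilon_E+\varepsilon_D)}$ one gets $\lvert H_i\rvert\asymp e^{-m(\varepsilon_E+\varepsilon_D)}$, so the lower bound on the covering length (which, note, comes from Lemma~\ref{lem:warsaw}, since CEC$+$ alone only bounds $\iota_i$ from above) gives $\iota_iK_5\gtrsim K_2K_5\,m(\varepsilon_E+\varepsilon_D)$, and the condition $\iota_iK_5>m(\varepsilon_E+\varepsilon_D)+O(1)$ needed for a positive one-cycle exponent holds only if $K_2K_5>1$, which the axioms do not guarantee. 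The paper resolves this by inserting the extra scale $\delta_0$ with $\delta_0<\exp(-m\sqrt{\varepsilon_D+\varepsilon_E})$ (conditions \eqref{defcond:K1K4delta0} and \eqref{defcond:Kfinal}), so that $\iota_iK_5\ge K_2K_5\lvert\log\delta_0\rvert\gtrsim K_2K_5\,m\sqrt{\varepsilon_E+\varepsilon_D}$ dominates $m(\varepsilon_E+\varepsilon_D)$, while $\varepsilon_1=\lvert\log\delta_0\rvert/m$ stays small enough not to spoil the upper bounds on the exponent and the transition times. This balance is a matter of choosing the quantifiers in the construction itself; it is not supplied by the horseshoe machinery of Section~\ref{sec:mulvarhor} to which you defer it. With these two corrections (SFT selection for the entropy count, and the $\delta_0$-scale for the $\alpha=0$ expansion budget) your argument coincides with the paper's proof.
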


We have the following version obtained for the inverse map $F^{-1}$.

\begin{theorem}\label{teo:PES1backward}
	Consider a transitive skew-product map $F$ as in~\eqref{eq:sp} whose fiber maps are $C^1$. Assume that it satisfies Axiom CEC$-(J)$ for some closed interval $J$ and has the Skeleton property relative to $J$ and some numbers $h>0$ and $\alpha\le0$.
	
	Then for every $\gamma\in(0,h)$ and every $\lambda>0$ there is a compact $F$-invariant topologically transitive hyperbolic set  $\widehat\Gamma\subset\Sigma_k\times\bS^1$ such that
\begin{enumerate}
\item its topological entropy with respect to $F$ satisfies $h_{\rm top}({\widehat\Gamma})\in[ h-\gamma,h+\gamma]$ and
\item for every $\nu\in\cM_{\rm erg}({\widehat\Gamma})$ we have $\chi(\nu)\in(\alpha-\lambda,\alpha+\lambda)\cap\bR_-$.
\end{enumerate}
\end{theorem}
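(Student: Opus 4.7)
The plan is to derive Theorem~\ref{teo:PES1backward} directly from Theorem~\ref{teo:PES1} applied to a time-reversed auxiliary system. I would introduce the step skew-product $\widetilde F(\xi,x)=(\sigma\xi, f_{\xi_0}^{-1}(x))$ driven by the inverse IFS $\{f_i^{-1}\}$. By the very definition of Axiom CEC$-(J)$, the map $\widetilde F$ satisfies Axiom CEC$+(J)$ with the same constants $K_1,\ldots,K_5$, and transitivity of $F$ passes to $\widetilde F$ because the accessibility conditions are symmetric under inversion. I would also record that $F^{-1}$ and $\widetilde F$ are topologically conjugate via the fiber-identity map $\Phi(\xi,x)=(\phi(\xi),x)$ with $\phi(\xi)_n=\xi_{-n-1}$, using $\phi\circ\sigma^{-1}=\sigma\circ\phi$ and the coincidence of the respective fiber maps at each point.

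Next I would transfer the Skeleton property. Given a Skeleton $\{X_i=(\xi^i,x_i)\}$ for $F$ of length $m$ with parameters $(J,h,\alpha)$, $\alpha\le 0$, I would take the reversed orbit pieces $\widetilde X_i=(\widetilde\xi^i,y_i)$ where $y_i\eqdef f_{[\xi^i_0\ldots\,\xi^i_{m-1}]}(x_i)$ and $\widetilde\xi^i_j\eqdef\xi^i_{m-1-j}$ for $0\le j<m$. The chain rule, combined with $(f_i^{-1})'(f_i(z))=f_i'(z)^{-1}$, converts the derivative bounds in item (iii) for $F$-iterates of $X_i$ with slope $\alpha$ into the analogous bounds for $\widetilde F$-iterates of $\widetilde X_i$ with slope $-\alpha\ge 0$. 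The connecting sequences in (iv) and (v) swap roles and lengths (so that $m_{\rm f}^{\widetilde F}\le m_{\rm b}^F$ and $m_{\rm b}^{\widetilde F}\le m_{\rm f}^F$) when read in reverse, producing a Skeleton for $\widetilde F$ relative to $(J,h,-\alpha)$ with cardinality at least $L_0^{-1}e^{m(h-\varepsilon_H)}$ (the sequences $(\widetilde\xi^i_0\ldots\widetilde\xi^i_{m-1})$ remain pairwise distinct because the originals are).

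Applying Theorem~\ref{teo:PES1} to $\widetilde F$ with the given $\gamma$ and $\lambda$ then yields a compact, $\widetilde F$-invariant, topologically transitive hyperbolic set $\widetilde\Gamma$ with $h_{\rm top}(\widetilde F,\widetilde\Gamma)\in[h-\gamma,h+\gamma]$ and $\chi_{\widetilde F}(\widetilde\nu)\in(-\alpha-\lambda,-\alpha+\lambda)\cap\bR_+$ for every $\widetilde\nu\in\cM_{\rm erg}(\widetilde\Gamma)$. Setting $\widehat\Gamma\eqdef\Phi^{-1}(\widetilde\Gamma)$, the conjugacy immediately returns an $F$-invariant (hence $F^{-1}$-invariant), compact, transitive, hyperbolic set with the same topological entropy. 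Since $\Phi$ is the identity on the fiber coordinate, the fiber Lyapunov exponent is preserved, giving $\chi_{F^{-1}}(\nu)=\chi_{\widetilde F}(\widetilde\nu)$ for $\nu=\Phi_\ast^{-1}\widetilde\nu$; a one-line application of $F$-invariance to the function $\log\lvert(f_{\xi_{-1}}^{-1})'(x)\rvert$ shows $\chi_{F^{-1}}(\nu)=-\chi_F(\nu)$, so that $\chi_F(\nu)\in(\alpha-\lambda,\alpha+\lambda)\cap\bR_-$ as required.

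The main point requiring care is the bookkeeping in the skeleton transfer, especially verifying items (iii)--(v) end-to-end for the reversed orbit pieces and tracking how the two connecting-time constants exchange roles; once this is set up, everything else is a formal consequence of the topological conjugacy $\Phi$ and standard properties of Lyapunov exponents under time reversal.
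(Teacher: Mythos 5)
Your reduction of Theorem~\ref{teo:PES1backward} to Theorem~\ref{teo:PES1} via the inverse system $\widetilde F$ is structurally sensible: the conjugacy $\Phi$, the passage of transitivity, and the entropy/exponent bookkeeping under time reversal are all fine. The problem is exactly the step you flag as ``the main point requiring care'', the transfer of item (iii) of the Skeleton property, and the chain rule does not deliver it. Item (iii) for $F$ controls the derivatives of the \emph{initial} subwords $(\xi^i_0\ldots\xi^i_{n-1})$ at $x_i$ for every $n\le m$; what the reversed skeleton needs is control of the \emph{terminal} subwords $(\xi^i_{m-n}\ldots\xi^i_{m-1})$ at the corresponding intermediate point, again for every $n\le m$. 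Writing that derivative as the quotient of the full product by the initial product only gives bounds of the form $K_0^{\pm 2}e^{-n\alpha\pm(2m-n)\varepsilon_E}$, and the factor $e^{2m\varepsilon_E}$ can be absorbed neither into an $m$-independent constant $\widetilde K_0$ (the quantifier order in the Skeleton definition fixes $K_0$ \emph{before} $m$) nor into $e^{n\widetilde\varepsilon_E}$ when $n\ll m$. The failure is genuine, not an artifact of the estimate: with $\alpha=0$ one can prescribe partial products $P_n=\lvert(f_{[\xi^i_0\ldots\,\xi^i_{n-1}]})'(x_i)\rvert$ satisfying $K_0^{-1}e^{-n\varepsilon_E}\le P_n\le K_0e^{n\varepsilon_E}$ for all $n\le m$ while the reversed partial products $P_m/P_{m-n}$ grow like $e^{cn}$, with $c$ of order $\log\D$, for all $n$ up to a time of order $\varepsilon_E m$. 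Since you invoke Theorem~\ref{teo:PES1} as a black box, and its proof really uses the intermediate-time upper bound of (iii) (it feeds Corollary~\ref{cor:distortionC1} to get distortion control on the intervals $I_i'$), the missing bounds cannot be skipped.

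For comparison, the paper does not attempt this reduction: it states that the proof of Theorem~\ref{teo:PES1backward} is analogous to that of Theorem~\ref{teo:PES1}, i.e.\ one re-runs the multi-variable-time horseshoe construction for the inverse IFS using CEC$-(J)$ in place of CEC$+(J)$. The reason this works while the black-box reduction does not is that the distortion control for the contracting branches can be extracted from the \emph{forward} bounds: since $\alpha\le 0$, item (iii) gives $\lvert(f^{\ell}_{\xi^i})'(x_i)\rvert\le K_0e^{\ell\varepsilon_E}$ for all $\ell\le m$, so Corollary~\ref{cor:distortionC1} applied along the forward orbit controls the distortion of the branch word on an interval of size comparable to $\delta_0\widehat K_0^{-1}e^{-m(\varepsilon_E+\varepsilon_D)}$ around $x_i'$, hence also of its inverse branch on the image interval around the terminal point in $J$; taking these images as the small Markov intervals, the construction of Sections~\ref{sec:quantify}--\ref{ssec:entropying} goes through and yields fiber exponents near $\alpha$ and entropy near $h$. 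So either turn your skeleton-transfer step into an honest lemma with a weakened item (iii) and verify that this weaker input suffices inside the proof of Theorem~\ref{teo:PES1} (which amounts to reopening that proof rather than citing it), or argue ``analogously'' as the paper does; as written, the reduction is incomplete.
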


Considering a family of continuous potentials instead of the potential $\varphi(\xi,x)=\log\,\lvert f_{\xi_0}'(x)\rvert$, we obtain a more general version of the above result, their proofs are almost identical so we state and will prove them together.

\begin{theorem}\label{teo:PES2}
Consider a transitive skew-product map $F$ as in~\eqref{eq:sp} whose fiber maps are $C^1$. Assume that it satisfies Axiom CEC$+(J)$ for some  closed interval  $J$ and has the Skeleton$\ast$ property relative to $J$ and some measure $\mu\in\cM_{\rm erg}$ with $\chi(\mu)\ge0$ and $h=h(\mu)>0$.
	
	Then for every $\gamma\in(0,h)$, every $\lambda>0$, and every $\kappa>0$ there is a compact $F$-invariant topologically transitive hyperbolic set  $\Gamma$ such that properties 1. and 2. in Theorem~\ref{teo:PES1} are true and in addition $d(\nu,\mu)<\kappa$ for every $\nu\in\cM(\Gamma)$,
	where $d$ is a metric which generates the weak$\ast$ topology.
\end{theorem}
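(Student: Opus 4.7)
The plan is to build $\Gamma$ as a multi-variable-time horseshoe in the sense of Section~\ref{sec:mulvarhor}, whose symbols are indexed by the skeleton orbit pieces provided by the Skeleton$\ast$ property and whose return map is obtained by concatenating each such orbit piece with the bounded-length connecting words from (iv)--(v) and a covering tail supplied by Axiom CEC$+(J)$. The proofs of Theorems~\ref{teo:PES1} and~\ref{teo:PES1backward} follow the same template, with the (unstarred) Skeleton property replacing Skeleton$\ast$ (hence no Birkhoff-sum control) and with CEC$-(J)$ used in place of CEC$+(J)$ for Theorem~\ref{teo:PES1backward} after inverting the skew-product.

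Fix $\varepsilon_H\in(0,\gamma/2)$ and $\varepsilon_E>0$ small (to be tuned by $\lambda$), and a finite family of continuous potentials $\varphi_1,\ldots,\varphi_\ell$ whose integrals $\varepsilon'$-determine the metric $d$ in a $\kappa$-ball around $\mu$. Apply the Skeleton$\ast$ property to obtain, for every sufficiently large $m$, a skeleton $\fX=\{X_i\}_{i=1}^N$ with $N\ge L_0^{-1}e^{m(h-\varepsilon_H)}$ satisfying (i)--(vi), together with reference points $x_i'\in J$ and connecting words $(\theta^i_1\ldots\theta^i_{r_i})$, $(\beta^i_1\ldots\beta^i_{s_i})$. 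Fix once and for all $\delta\in(0,K_1)$ \emph{independent of $m$} and, for each $i$, apply CEC$+(J)$ to an interval $H_i$ of length $\delta$ intersecting $J$ and containing the endpoint $y_i\eqdef f_{[\xi^i_0\ldots\,\xi^i_{m-1}\beta^i_1\ldots\,\beta^i_{s_i}]}(x_i)\in J$; this yields a covering word $(\eta^i_0\ldots\eta^i_{t_i-1})$ of length $t_i\le T\eqdef K_2\lvert\log\delta\rvert+K_3$ with $f_{[\eta^i_0\ldots\,\eta^i_{t_i-1}]}(H_i)\supset B(J,K_4)$ and $\log\lvert (f_{[\eta^i_0\ldots\,\eta^i_{t_i-1}]})'(z)\rvert\ge t_iK_5$ on $H_i$. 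Setting
\[
\omega^i\eqdef(\theta^i_1\ldots\,\theta^i_{r_i}\,\xi^i_0\ldots\,\xi^i_{m-1}\,\beta^i_1\ldots\,\beta^i_{s_i}\,\eta^i_0\ldots\,\eta^i_{t_i-1}),\qquad N_i\eqdef r_i+m+s_i+t_i,
\]
property (ii) makes the $\omega^i$ pairwise distinct, and each describes a branch mapping a small sub-interval of $J$ around $x_i'$ onto a set containing $B(J,K_4)\supset J$. Feeding $\{\omega^i\}$ into the multi-variable-time construction of Section~\ref{sec:mulvarhor} produces a compact $F$-invariant topologically transitive hyperbolic set $\Gamma$ modelled on a full shift on $N$ symbols with variable return times $N_i\in[m,\,m+m_{\rm f}+m_{\rm b}+T]$.

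Since $N_i=m+O(1)$ uniformly in $i$, the induced suspension gives
\[
\frac{\log N}{m+m_{\rm f}+m_{\rm b}+T}\;\le\;h_{\rm top}(F,\Gamma)\;\le\;\frac{\log N}{m},
\]
so by the lower bound on $N$, $h_{\rm top}(F,\Gamma)\in[h-\gamma,h+\gamma]$ once $\varepsilon_H$ is small and $m$ large. For the fiber exponent of any ergodic $\nu\in\cM(\Gamma)$, the per-branch contribution is
\[
\frac{1}{N_i}\log\bigl\lvert(f_{\omega^i})'(\cdot)\bigr\rvert=\frac{m\alpha\pm m\varepsilon_E+O(1)}{m+O(1)}=\alpha\pm\varepsilon_E+O(1/m),
\]
using (iii), the bounded-length connecting estimate, the uniform covering bound, and the distortion tools of Section~\ref{sec:generaltoos} to transfer these estimates from $X_i$ to a neighbourhood. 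If $\alpha>0$ positivity is automatic once $\varepsilon_E<\alpha/2$; if $\alpha=0$ the strictly positive contribution $t_iK_5$ of the covering step, with $\varepsilon_E=o(1/m)$, still forces $\chi(\nu)>0$. For the weak$\ast$ assertion, property (vi) gives $\lvert\sum_{k=0}^{N_i-1}\varphi_j(F^k(\cdot))-N_i\overline\varphi_j\rvert\le m\varepsilon_E+O(1)$ along each branch, hence every $\nu\in\cM(\Gamma)$ satisfies $\lvert\int\varphi_j\,d\nu-\int\varphi_j\,d\mu\rvert\le\varepsilon_E+O(1/m)<\varepsilon'$ and therefore $d(\nu,\mu)<\kappa$.

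The main obstacle is engineering the closing-up step so that $N_i/m\to 1$ uniformly in $i$: this is what keeps the entropy $\gamma$-close to $h$ and the per-branch exponent $\lambda$-close to $\alpha$ simultaneously. Holding $\delta$ fixed (independent of $m$) makes $T$ a universal constant and is the crucial quantitative trick, in contrast with the setting of Theorem~\ref{theo:main3twin} where one deliberately spends a definite fraction of the itinerary on the covering step to shift the exponent. A secondary subtlety, addressed by the distortion statements of Section~\ref{sec:generaltoos}, is to extend the per-point expansion and Birkhoff-sum control from the skeleton point $X_i$ to a neighbourhood large enough for the branch to act as a genuine expanding Markov map onto $B(J,K_4)\supset J$, legitimizing the application of the multi-variable-time horseshoe construction.
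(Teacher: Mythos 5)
Your overall architecture (skeleton orbit pieces plus connecting words plus a CEC$+(J)$ covering tail, fed into a multi-variable-time horseshoe; entropy via the return-time sandwich; exponents via per-branch derivative estimates; weak$\ast$ closeness via item (vi)) is the same as the paper's. However, the step you advertise as the crucial quantitative trick --- fixing the covering scale $\delta$ \emph{independently of} $m$, so that the covering word has bounded length $T=K_2\lvert\log\delta\rvert+K_3$ --- is exactly the opposite of the paper's choice, and it is where the argument breaks.

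Two concrete failures. First, hyperbolicity and positivity of the exponent when $\alpha=\chi(\mu)=0$ (the case needed for Theorem~\ref{the:luzzaattoo}): item (iii) of the Skeleton$\ast$ property only gives derivative control at the skeleton point up to a factor $K_0^{\pm1}e^{\pm m\varepsilon_E}$, so along the length-$m$ block the derivative may be as small as $K_0^{-1}e^{-m\varepsilon_E}$, while your covering tail contributes at most the bounded gain $TK_5$. For fixed $\varepsilon_E$ and large $m$ the loss $m\varepsilon_E+\log K_0$ swamps $TK_5$, so you cannot conclude that the branch maps are expanding, hence neither hyperbolicity of $\Gamma$ nor $\chi(\nu)>0$ follows. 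Your remedy ``$\varepsilon_E=o(1/m)$'' is not available: in the Skeleton$\ast$ property $\varepsilon_E$ is fixed \emph{before} $m$ (the constants $K_0,L_0,n_0$ depend on it), so it cannot shrink with $m$. The paper resolves this by making the covering scale $m$-dependent, $\delta_0<\exp(-m\sqrt{\varepsilon_D+\varepsilon_E})$ as in~\eqref{defcond:K1K4delta0}, so that $\ell_iK_5\ge K_2K_5\lvert\log\delta_0\rvert\gtrsim K_2K_5\,m\sqrt{\varepsilon_D+\varepsilon_E}$ beats $m(\varepsilon_E+\varepsilon_D)+\log\widehat K_0$ (condition~\eqref{defcond:Kfinal}), while $\varepsilon_1=\lvert\log\delta_0\rvert/m\ll\min\{\gamma,\lambda\}$ keeps $\ell_i/m$ negligible for the entropy and exponent estimates. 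Second, distortion: estimates (iii) and (vi) hold at the single point $X_i$, and with only $C^1$ fibre maps they can be transferred to a neighbourhood over $m$ iterates only if that neighbourhood is exponentially small in $m$; Corollary~\ref{cor:distortionC1} requires radius of order $\delta_0K_0^{-1}e^{-m(\varepsilon+\varepsilon_D)}$. With a branch domain of size comparable to the fixed $\delta$ (which is what you get when $\alpha=0$), there is no distortion or shadowing control along the $m$-block, so neither uniform expansion on the whole branch domain nor the Birkhoff-sum estimate for \emph{all} points of $\Gamma$ (hence $d(\nu,\mu)<\kappa$ for every $\nu\in\cM(\Gamma)$) is justified. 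A more minor omission: the transitions need not form a full shift, since the sets $T^{-t_{ij}}(S_j)$ for different return times need not be disjoint; the paper passes to the admissible subshift of Section~\ref{sec:mulvarhor}, which only costs the $\log(t_{\rm max}-t_{\rm min}+1)$ term in the entropy lower bound.
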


And there is  again an ``inverse version''.

\begin{theorem}\label{teo:PES2backward}
Consider a transitive skew-product map $F$ as in~\eqref{eq:sp} whose fiber maps are $C^1$. Assume that it satisfies Axiom CEC$-(J)$ for some closed interval  $J$ and has the Skeleton$\ast$ property relative to $J$, some measure $\mu\in\cM_{\rm erg}$ with $\chi(\mu)\le0$
and $h=h(\mu)>0$.
	
	Then for every $\gamma\in(0,h)$, every $\lambda>0$, and every $\kappa>0$ there is a compact $F$-invariant topologically transitive hyperbolic set  $\Gamma$ such that properties 1. and 2. in Theorem~\ref{teo:PES1backward} are true and in addition $d(\nu,\mu)<\kappa$ for every $\nu\in\cM(\Gamma)$, where $d$ is a metric which generates the weak$\ast$ topology.
\end{theorem}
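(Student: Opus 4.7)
The plan is to run the construction behind Theorem~\ref{teo:PES2} with the roles of forward and backward iteration of the IFS $\{f_i\}$ interchanged throughout. The Skeleton$\ast$ property for $F$ relative to $J$ and $\mu$ supplies orbit pieces $X_i=(\xi^i,x_i)$ of common length $m$, of cardinality at least $L_0^{-1}e^{m(h-\varepsilon_H)}$, all pairwise distinguishable on the first $m$ coordinates, with derivative bounds that now reflect \emph{forward contraction} (since $\alpha=\chi(\mu)\le 0$) rather than expansion, approximating $\mu$ on any prescribed finite family of continuous potentials in the sense of (vi), and anchored to the blending interval $J$ at each end by the connecting sequences in (iv) and (v).

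I then build the multi-variable-time horseshoe $\widehat\Gamma$ by concatenating skeleton pieces and closing the concatenations. In the proof of Theorem~\ref{teo:PES2} the closing between consecutive pieces is accomplished by applying Axiom CEC$+(J)$ to a small neighborhood of the endpoint of one piece (which, after (v), lies in $J$), to forward-cover $J$ and in particular the starting point of the next piece. Here, because the skeleton pieces contract in forward time, I apply Axiom CEC$-(J)$ to a small neighborhood of the starting point of the next piece (which by (iv) has been prepared to lie in $J$), to backward-cover $J$ and in particular the endpoint of the previous piece. The distortion estimates of Section~\ref{sec:generaltoos}, in particular Corollary~\ref{cor:distortionC1} and the backward analogue of Lemma~\ref{lem:disaxicec}, control the multiplicative error along the closing segments. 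Standard separated-set counting and Markov-type coding then yield the topological entropy bound $h_{\rm top}(F,\widehat\Gamma)\in[h-\gamma,h+\gamma]$. The Lyapunov-exponent bounds $\chi(\nu)\in(\alpha-\lambda,\alpha+\lambda)\cap\bR_-$ for every $\nu\in\cM_{\rm erg}(\widehat\Gamma)$ follow from the uniform forward contraction along the skeleton pieces together with the controlled backward expansion provided by CEC$-(J)$, while the weak$\ast$ proximity $d(\nu,\mu)<\kappa$ for every $\nu\in\cM(\widehat\Gamma)$ is a consequence of condition (vi) applied to a countable dense family of continuous potentials generating the weak$\ast$ topology, combined with the fact that the closing CEC$-$ segments occupy an asymptotically negligible fraction of each orbit.

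The main obstacle is the lower bound $\chi(\nu)>\alpha-\lambda$ in item 2. The skeleton contributes forward contraction at rate $\approx e^{m\alpha}$ on each stretch of length $m$, whereas each closing CEC$-$ segment, of length $O(\lvert\log\lvert H\rvert\rvert)$ for a small interval $H$, contributes a bounded amount of backward expansion per piece, which translates into a bounded forward-contraction deficit over the total orbit length. Choosing $m$ large enough and $\lvert H\rvert$ not too small so that the closing segments contribute proportionally less than $\lambda$ to the exponent mirrors the corresponding estimate in the proof of Theorem~\ref{teo:PES2}, and the required computation is the sign-flipped dual of the one there.
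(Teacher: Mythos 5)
Your proposal is correct and is essentially the paper's own route: the paper proves Theorem~\ref{teo:PES2} in detail and obtains Theorem~\ref{teo:PES2backward} as the time-reversed ("inverse map $F^{-1}$") analogue, with Axiom CEC$-(J)$ playing the role of CEC$+(J)$ and the skeleton pieces expanding in backward time, which is precisely the sign-flipped multi-variable-time horseshoe construction you describe. Your accounting of the closing segments' negligible proportional length for the exponent and weak$\ast$ estimates matches the quantifier choices ($\ell_i=m\,O(\varepsilon_1+\varepsilon_E+\varepsilon_D)$) used in Sections~\ref{sec:quantify}--\ref{ssec:birkhoff}.
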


We finally state a result that allows us to ``push entropy to the other side'' in the sense that we ``perturb'' an ergodic measure with negative fiber exponent to an ergodic measure with positive exponent. However, comparing with the construction in the proof of the above results,  we 
obtain some lower bound on entropy and some rough estimate of fiber exponent and weak$\ast$ distance (which get worse when considering measures with exponents further away from zero).

\begin{theorem}\label{teo:mimick}
 Consider a transitive skew-product map $F$ as in~\eqref{eq:sp} whose fiber maps are $C^1$. Assume that it satisfies Axioms CEC$+(J)$ and Acc$\pm(J)$ for some closed interval $J\subset\bS^1$. Let $\mu\in \cM_{\rm erg}$ with $\alpha= \chi(\mu) <0$ and assume that $F$ satisfies the Skeleton$\ast$ property relative to $J$ and $\mu$.

Then for every $\beta>0$, $\gamma \in (0,h(\mu))$, $\lambda>0$, and $\varkappa>0$ there is a compact $F$-invariant topologically transitive hyperbolic set $\widehat\Gamma$ such that
\begin{itemize}
\item[1.] its topological entropy satisfies
	\[
		h_{\rm top}(F,\widehat\Gamma)
		\ge \frac{h(\mu) }{1+K_2(|\beta| +\lvert\alpha\rvert) } - \gamma,
	\]
\item[2.] for every $\nu\in \cM_{\rm erg}(\widehat\Gamma)$ we have $\chi(\nu)\beta<0$ and
	\[
		 \frac{ |\beta|}{1+K_2(|\beta|+\lvert\alpha\rvert)}  - \lambda
		 < |\chi(\nu)| <
		 \frac{|\beta|}{1+\frac{1}{\log\D}(|\beta|+\lvert\alpha\rvert)}+ \lambda,
	\]	
	where $K_2$ is as in Axiom CEC$+(J)$ and $\lVert F\rVert$ is as in~\eqref{def:cunifconst}.
\item[3.] for every $\nu\in\cM(\widehat\Gamma)$ we have
	\[
		d(\nu,\mu)
		<\frac{K_2(|\beta|+\lvert \alpha\rvert)}{1+K_2(|\beta|+\lvert \alpha\rvert)}
		+\varkappa.
	\]	
\end{itemize}
The same conclusion is true for $\alpha > 0$ and every $\beta < 0$.
\end{theorem}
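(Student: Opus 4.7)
The plan is to extend each skeleton orbit piece by a CEC$+(J)$-covering block of tuned length so that the average fiber log-derivative over each resulting super-piece has magnitude $\beta/(1+K_2(\beta+|\alpha|))$ as required by item~2. This is the same scheme as in Theorem~\ref{teo:PES2}, except that the covering block, rather than being minimal, is lengthened so as to push the block-average exponent from $\alpha$ to the prescribed target (crossing zero when enough expansion is inserted, which, as in Theorem~\ref{theo:main3twin}, is the relevant regime).

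First I invoke the Skeleton$\ast$ property of $F$ relative to $J$ and $\mu$ with test functions $\varphi_1,\ldots,\varphi_q$ chosen to determine the weak$\ast$ topology on $\cM$ up to error $\varkappa/2$, and accuracy parameters $\varepsilon_H\in(0,\gamma/2)$ and $\varepsilon_E>0$ to be fixed later. For every sufficiently large $m$ this yields a family $\fX=\{X_i=(\xi^i,x_i)\}_{i=1}^L$ with $L\ge L_0^{-1}e^{m(h(\mu)-\varepsilon_H)}$ skeleton pieces, pairwise symbolically distinct on their first $m$ coordinates, whose skeleton log-derivatives lie in an $\varepsilon_E m$-window around $m\alpha$, whose $\varphi_j$-Birkhoff sums are within $\varepsilon_E m+K_0$ of $m\int\varphi_j\,d\mu$, and each equipped with bounded-length connecting words $(\theta^i_1\ldots\theta^i_{r_i})$ and $(\beta^i_1\ldots\beta^i_{s_i})$ as in items (iv)--(v) of the Skeleton property.

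Next, for each $i$, I construct the covering extension. Pulling back a Bowen-type ball $B_i$ of radius $\sim K_0^{-1}\delta_0 e^{-m(|\alpha|+\varepsilon_D)}$ around $x_i$, Corollary~\ref{cor:distortionC1} retains bounded distortion over the skeleton portion, so the image of $B_i$ after $m+s_i$ iterations is an interval $H_i$ of length $\asymp\delta_0$ intersecting $J$. Applying Lemma~\ref{lem:warsaw} together with Lemma~\ref{lem:disaxicec} for distortion control to $H_i$ yields a covering word $(\rho^i_0\ldots\rho^i_{\ell_i-1})$ of length $\ell_i\approx K_2 m(\beta+|\alpha|)$ whose image contains $B(J,K_4)$ and whose per-step expansion rate lies in $[K_5,\log\lVert F\rVert]$. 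A one-line balance computation fixes $\ell_i$ so that the super-word $\omega^i\eqdef(\theta^i_1\ldots\theta^i_{r_i}\,\xi^i_0\ldots\xi^i_{m-1}\,\beta^i_1\ldots\beta^i_{s_i}\,\rho^i_0\ldots\rho^i_{\ell_i-1})$ of total length $N_i\approx m(1+K_2(\beta+|\alpha|))$ has block-average fiber log-derivative $\approx m\beta/(m+\ell_i)\approx\beta/(1+K_2(\beta+|\alpha|))$ in magnitude, with sign prescribed by item~2. Feeding the family $\{\omega^i\}_{i=1}^L$ into the multi-variable-time horseshoe construction of Section~\ref{sec:mulvarhor} then produces a compact $F$-invariant topologically transitive hyperbolic set $\widehat\Gamma$, since each super-word begins and ends with an interval covering $B(J,K_4)$ and since the covering block carries uniform cone expansion. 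Its topological entropy satisfies $h_{\rm top}(F,\widehat\Gamma)\ge(\log L)/\max_iN_i\ge h(\mu)/(1+K_2(\beta+|\alpha|))-\gamma$, giving item~1. For $\nu\in\cM_{\rm erg}(\widehat\Gamma)$, Birkhoff's theorem applied to $\log|f'_{\xi_0}|$ gives a block-average whose magnitude is bounded below using the axiom's lower rate $\ell_iK_5$ and above using the trivial upper rate $\ell_i\log\lVert F\rVert$, yielding both bounds of item~2 after choosing $\varepsilon_E,\varepsilon_D$ small enough. For item~3, the empirical measure of any $\nu\in\cM(\widehat\Gamma)$ decomposes into a skeleton part of relative length $m/N_i$, which is $\varkappa/2$-close to $\mu$ on the chosen test functions by Skeleton$\ast$-property (vi), plus a filler part of relative length $(N_i-m)/N_i\le K_2(\beta+|\alpha|)/(1+K_2(\beta+|\alpha|))+o(1)$, whose contribution to $d(\nu,\mu)$ is bounded trivially.

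The main obstacle is the hyperbolicity of $\widehat\Gamma$ at the multi-variable-time scale: each super-piece contracts during its skeleton portion and expands during its covering portion, so the hyperbolic cone field only closes up at the super-symbol timescale $N_i$ and not at individual iterations. The distortion estimates of Corollary~\ref{cor:distortionC1} on the skeleton and Lemma~\ref{lem:disaxicec} on the covering are essential to ensure that the family $\{\omega^i\}$ defines nested Bowen tubes and hence a genuine hyperbolic horseshoe conjugate to the full shift on $L$ symbols.
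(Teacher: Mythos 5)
Your general architecture (skeleton pieces of length $m$, a CEC$+(J)$ covering block as filler, the multi-variable-time horseshoe, and the entropy/weak$\ast$ bookkeeping via the relative lengths $m/(m_i+\ell_i)$) is the paper's, but the mechanism you use to produce the new exponent has a genuine gap. You propose to keep the skeleton image $H_i$ at the fixed scale $\delta_0$ and then \emph{tune the length} $\ell_i\approx K_2m(\beta+|\alpha|)$ of the covering word by a ``balance computation''. The axioms do not allow this: CEC$+(J)$ and Lemma~\ref{lem:warsaw} produce covering words whose length is pinned to $\asymp K_2|\log|H_i||$ (up to a factor $2$), so for $|H_i|\asymp\delta_0$ the covering length is $O(1)$ in $m$, not of order $m$; and even if you artificially lengthen the block by iterating coverings, the only expansion information you retain is the per-step window $[K_5,\log\lVert F\rVert]$, so the total log-derivative over the super-word is anywhere between $m\alpha+\ell_iK_5$ and $m\alpha+\ell_i\log\lVert F\rVert$ --- it is \emph{not} pinned at $m\beta$, and the resulting block-average exponent is an uncontrolled quantity of the form $\frac{\alpha+cK_2(\beta+|\alpha|)}{1+K_2(\beta+|\alpha|)}$ with $c\in[K_5,\log\lVert F\rVert]$, which does not give either inequality of item~2. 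There is also a scaling slip at the start: since $\alpha<0$ the skeleton \emph{contracts} by $\approx e^{-m|\alpha|}$, so the image of a ball of radius $\sim\delta_0e^{-m(|\alpha|+\varepsilon_D)}$ around $x_i$ after $m+s_i$ iterates has size $\approx\delta_0e^{-m(2|\alpha|+\varepsilon_D)}$, not $\asymp\delta_0$; the radius you quote is the one from the expanding case of Theorem~\ref{teo:PES1}.

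The paper's proof pins the exponent by prescribing the \emph{width of the Markov interval}, not the length of the filler: one takes $I_i'=B(x_i',e^{-m\beta})$ around the point $x_i'\in J$, so that the skeleton word sends it to $H_i'$ of size $\approx e^{-m(\beta+|\alpha|)}$ (Corollary~\ref{cor:distortionC1} controls distortion here, which is where the condition $\varepsilon+\varepsilon_D<\beta$ enters). Applying Lemma~\ref{lem:disaxicec} to this exponentially small $H_i'$ then \emph{automatically} gives $\ell_i\le K_2m(\beta+|\alpha|)+O(1)$, while the trivial bound $\lVert F\rVert^{\ell_i}|H_i'|\ge|J|$ gives $\ell_i\ge m(\beta+|\alpha|)/\log\lVert F\rVert-O(1)$; and, crucially, the \emph{total} expansion of the full return word is pinned at $\approx\log(1/|I_i'|)=m\beta$ by the mean value theorem (the image covers $J$ but lies in $\bS^1$) together with the distortion estimates on both the skeleton and the covering portions. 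Dividing by the return time $m_i+\ell_i$ and using the two bounds on $\ell_i$ yields exactly the window in item~2; your entropy and weak$\ast$ estimates then go through as you wrote them, since they only use $\ell_i\le K_2m(\beta+|\alpha|)+O(m\varepsilon)$ and the boundedness of the connecting times. To repair your argument you should replace the ``tuned filler length'' by this choice of rectangle width and add the mean-value-theorem step that pins the total expansion.
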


The proofs of the above results are postponed to Section~\ref{sec:proofmainprops}. We will only prove Theorems~\ref{teo:PES1} and~\ref{teo:PES2}, the proofs of Theorems~\ref{teo:PES1backward} and~\ref{teo:PES2backward} are analogous, and we will sketch the proof of Theorem~\ref{teo:mimick}.

\subsection{Existence of  skeletons}\label{ss.existenceofskeletons}

In this section we verify skeleton properties under the Axioms CEC$\pm(J)$ and Acc$\pm(J)$ for some interval $J$.

Below we deal
 with the topological entropy of certain sets, that is, to find a separated set of points.
 Since these sets may be noncompact this 
is a bit delicate, recall the definition of entropy in Appendix.

\begin{proposition}
	Consider a skew-product map $F$ as in~\eqref{eq:sp} whose fiber maps are $C^1$ and assume that it satisfies Axioms CEC$\pm(J)$ and Acc$\pm(J)$ for some interval $J\subset\bS^1$.
		Given $\alpha\ge0$, suppose that
	\[
		\cL(\alpha)
		\eqdef
		\big\{(\xi,x)\in\Sigma_k\times\bS^1\colon \lim_{n\to\pm\infty}\frac{1}{n}\log\,
		\lvert (f^n_\xi)'(x)\rvert = \alpha\big\}
		\ne\emptyset,
	\]		
	and let
\[
	h=h_{\rm top}(F,\cL(\alpha)).
\]	

Then the $F$ has the Skeleton property relative to $J,h$, and $\alpha$.	
\end{proposition}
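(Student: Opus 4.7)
The plan is to construct the skeleton by combining three ingredients: (a) a variational principle linking the topological entropy $h=h_{\rm top}(F,\cL(\alpha))$ to ergodic measures with fiber Lyapunov exponent exactly $\alpha$; (b) Proposition~\ref{pro:BriKat} applied to such a measure, which produces uniformly controlled $(m,1)$-separated orbit pieces of the correct cardinality; and (c) the accessibility axioms Acc$\pm(J)$, via Lemma~\ref{lem:idiota}, to provide the connecting words required by items (iv) and (v) of the Skeleton definition.

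First I would invoke a multifractal-type variational principle for the Birkhoff level set $\cL(\alpha)$. Since $\cL(\alpha)$ is the coincidence level set (forward and backward) of the Birkhoff averages of the continuous potential $\log |f_{\xi_0}'(x)|$, and with $h_{\rm top}$ defined as in the Appendix,
\[
h_{\rm top}(F,\cL(\alpha))=\sup\{h(\nu):\nu\in\cM_{\rm erg},\ \chi(\nu)=\alpha\}.
\]
Given $\varepsilon_H>0$, select an ergodic measure $\mu$ with $\chi(\mu)=\alpha$ and $h(\mu)\ge h-\varepsilon_H/3$.

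Next, apply Proposition~\ref{pro:BriKat} to $\mu$ with $A=\Sigma_k\times\bS^1$, small $\kappa,r\in(0,1)$, and the prescribed $\varepsilon_E>0$. This delivers constants $K_0$ and $n_0$ and a set $\Lambda'$ of $\mu$-measure at least $1-\kappa$ on which the uniform Lyapunov estimate required for item (iii) already holds. For every $n\ge n_0$ there is some $m\in\{n,\ldots,n+\lceil rn\rceil+1\}$ and an $(m,1)$-separated subset of $A\cap\Lambda'$ of cardinality at least $(1-\kappa)e^{m(h(\mu)-\varepsilon_H/3)}$. The Skeleton property demands such a set for \emph{every} length $M\ge n_0$; this is arranged by the elementary observation that any $(m,1)$-separated set is automatically $(M,1)$-separated whenever $M\ge m$. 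For given $M$, applying Proposition~\ref{pro:BriKat} with $n\approx (M-1)/(1+r)$ forces the resulting $m$ to lie in $[n,M]$, and the corresponding $(m,1)$-separated set, viewed as $(M,1)$-separated, has cardinality at least $L_0^{-1}e^{M(h-\varepsilon_H)}$ once the factor $1/(1+r)$ and the gap $h-h(\mu)$ are absorbed into $\varepsilon_H$ by choosing $r,\kappa$ small; this yields (i). Item (ii) is obtained by pruning the family $\{X_i\}$ to retain at most one representative per $m$-cylinder of $\Sigma_k$, the resulting loss being bounded by a uniform constant depending only on $\lVert F\rVert$ and the circle diameter, and absorbed into $L_0$.

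Finally, for items (iv) and (v) I would invoke Lemma~\ref{lem:idiota} twice. Axiom Acc$+(J)$ furnishes the uniform connecting length $m_{\rm f}$ together with, for each $x_i$, a word $(\theta^i_1\ldots\theta^i_{r_i})$ with $r_i\le m_{\rm f}$ such that $x_i':=f_{[\theta^i_1\ldots\,\theta^i_{r_i}.]}(x_i)\in J$; then $f_{[\theta^i_1\ldots\,\theta^i_{r_i}]}(x_i')=x_i$, which is (iv). Axiom Acc$-(J)$ applied to the forward iterate $y_i:=f_{[\xi^i_0\ldots\,\xi^i_{m-1}]}(x_i)$ yields $m_{\rm b}$ and $(\beta^i_1\ldots\beta^i_{s_i})$ with $s_i\le m_{\rm b}$ and $f_{[\beta^i_1\ldots\,\beta^i_{s_i}]}(y_i)\in J$, which is (v). The hard part of this plan is step (a): the variational principle for the non-compact level set $\cL(\alpha)$ is what legitimizes passing from the purely topological datum $h$ to an ergodic measure $\mu$ to which Proposition~\ref{pro:BriKat} applies. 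While this identification is standard for Birkhoff level sets of continuous potentials in classical hyperbolic settings via multifractal formalism, here it must be extracted from the precise entropy definition recalled in the Appendix, with particular attention to the fact that the definition of $\cL(\alpha)$ involves both the forward and the backward limit.
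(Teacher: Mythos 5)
Your ingredients (b) and (c) are sound, and your handling of the quantifier ``for every $m\ge n_0$'' would work, but step (a) is a genuine gap that the rest of the argument hinges on. The identity $h_{\rm top}(F,\cL(\alpha))=\sup\{h(\nu)\colon\nu\in\cM_{\rm erg},\ \chi(\nu)=\alpha\}$ is a restricted variational principle for a noncompact, two-sided level set. Only the inequality ``$\ge$'' is soft (a set of full measure for an ergodic $\nu$ has Bowen entropy at least $h(\nu)$). What you actually need is the other direction: for every $\varepsilon$ an \emph{ergodic} $\mu$ with $\chi(\mu)$ exactly $\alpha$ and $h(\mu)\ge h-\varepsilon$. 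This is the hard half of the multifractal formalism; it cannot be ``extracted from the entropy definition in the Appendix'', and it is not available in this setting: $F$ is not expansive, has no specification-like property, and the proposition explicitly allows $\alpha=0$, where the existence of ergodic measures with exponent exactly zero and entropy close to $h_{\rm top}(F,\cL(0))$ is precisely the kind of statement this paper's machinery is being built to attack (such restricted variational principles are obtained in follow-up work \emph{using} the skeleton constructions, so assuming one here is circular in spirit). Note also that $\cL(\alpha)\ne\emptyset$ by itself does not even yield an ergodic measure with exponent exactly $\alpha$: weak$\ast$ limits of empirical measures along an orbit in $\cL(\alpha)$ have \emph{average} exponent $\alpha$, but their ergodic components may have exponents on both sides of $\alpha$.

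The paper's proof avoids measures altogether, and this is the essential difference. It filters the level set as $\cL(\alpha)=\bigcup_{N\ge1}\cL_N(\alpha)$, where $\cL_N(\alpha)$ consists of points whose finite-time exponents are $\varepsilon_E$-close to $\alpha$ for all $n\ge N$, and uses countable stability of the Bowen entropy to find $N$ with $h_{\rm top}(F,\cL_N(\alpha))>h-\tfrac13\varepsilon_H$. Then Lemmas~\ref{lem:eeentropu} and~\ref{lem:uuuposilon} show directly from the definition of entropy that any set $\Theta$ with $h_{\rm top}(F,\Theta)>h-\varepsilon_H$ contains, for every large $m$, at least $e^{m(h-\varepsilon_H)}$ points whose base words of length $m$ are pairwise distinct; this gives items (i) and (ii) with $L_0=1$. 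Item (iii) is then automatic from membership in $\cL_N(\alpha)$, with $K_0$ chosen to absorb the finitely many iterates $n<n_0$, and items (iv), (v) come from Lemma~\ref{lem:idiota} exactly as in your step (c). If you wish to keep a measure-theoretic route via Proposition~\ref{pro:BriKat} (which is how the paper proves the Skeleton$\ast$ property in Proposition~\ref{prop:lunch1}, where a measure $\mu$ is \emph{given}), you would first have to prove the restricted variational principle for $\cL(\alpha)$, which is a theorem of at least comparable depth to the proposition itself.
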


\begin{proof}
Given $J$, consider the constants $m_{\rm f},m_{\rm b}$ provided by Lemma~\ref{lem:idiota}.

Now let $\varepsilon_H\in(0,h)$ and $\varepsilon_E>0$.

We introduce a filtration of the set $\cL(\alpha)$ into sets $\cL_N(\alpha)$ where the finite-time Lyapunov exponents are uniformly $\varepsilon_E$-close to $\alpha$. Given $N\ge1$ define
\[
	\cL_N(\alpha)
	\eqdef \Big\{X=(\xi,x)\in\cL(\alpha)\colon
	\Big\lvert\frac1n\log\,\lvert (f_{[\xi_0\ldots\,\xi_{n-1}]})'(x)\rvert - \alpha\Big\rvert
	\le \varepsilon_E\,\,\forall n\ge N\Big\}.
\]
Note that we have the countable union
\[
	\cL(\alpha)=\bigcup_{N\ge1}\cL_N(\alpha).
\]	
Since entropy is countably stable, for $n_2=n_2(\alpha,\varepsilon_H)$ large enough for every $N\ge n_2$ we have
\[
	h_{\rm top}(F,\cL_N(\alpha))>h-\frac{1}{3}\varepsilon_H.
\]


We have the following intermediate results. Consider the natural projection $\varpi\colon\Sigma_k\times\bS^1\to\Sigma_k$. 

\begin{lemma}\label{lem:eeentropu}
	For any set $\Theta\subset\Sigma_k\times\bS^1$ we have $h_{\rm top}(F,\Theta)=h_{\rm top}(\sigma,\varpi(\Theta))$.
\end{lemma}

\begin{proof}
	Given $\Theta\subset\Sigma_k\times\bS^1$, let $\Theta'=\{\{\xi\}\times\bS^1\colon \xi\in\varpi(\Theta)\}$.
	It is almost straightforward from the definition of entropy to show that $h_{\rm top}(F,\Theta')=h_{\rm top}(\sigma,\varpi(\Theta'))$. This immediately implies the lemma.
\end{proof}

\begin{lemma}\label{lem:uuuposilon}
	Let $\Theta\subset\Sigma_k\times\bS^1$ be a set with entropy $h_{\rm top}(F,\Theta)> h-\varepsilon_H$. Then there is $n_1=n_1(\Theta)\ge1$ such that for every $n\ge n_1$ we have
\[
	M_n(\Theta)\ge e^{n(h-\varepsilon_H)}
	\quad\text{ and }\quad
	M_n(\varpi(\Theta))\ge e^{n(h-\varepsilon_H)},
\]	
where $M_n(\Theta)$ is the maximal cardinality of a set of $(n,1)$-separated points in $\Theta$ and $M_n(\varpi(\Theta))$ denotes the  number for corresponding points in $\varpi(\Theta)$.
\end{lemma}

\begin{proof}
By Lemma~\ref{lem:eeentropu}, $h\eqdef h_{\rm top}(F,\Theta)=h_{\rm top}(\sigma,\varpi(\Theta))$.

Recall the definition of entropy in Appendix. Let us first prove the relation for $M_n(\varpi(\Theta))$. By contradiction, assume that this is not the case and that there is a sequence $n_\ell\to\infty$ such that for every $\ell$ we have
\begin{equation}\label{eq:claimm}
	M_{n_\ell}(\varpi(\Theta))<e^{n_\ell(h-\gamma)}.
\end{equation}
We consider the finite open cover $\cA$ of $\varpi(\Theta)$ by the level-$1$ cylinders $[i]$, $i=0,\ldots,k-1$. For each $\ell$ consider a $(n_\ell,1)$-separated set $\{\xi^{\ell,j}\}_j$ in $\varpi(\Theta)$ of maximal cardinality and their associated  level-$n_\ell$ cylinders $U_{\ell,j}=[\xi^{\ell,j}_0\ldots\xi^{\ell,j}_{n_\ell-1}]$. Observe that, by construction, $\cU_\ell=\{U_{\ell,j}\}_j$ is a cover of $\varpi(\Theta)$. For each $U_{\ell,j}$ we have $n_{\sigma,\cA}(U_{\ell,j})= n_\ell$.  Take any $d>h-\gamma$. Hence, with the estimate~\eqref{eq:claimm} of the cardinality of this cover, we obtain
\[
	\sum_je^{-d\,n_{\sigma,\cA}(U_{\ell,j})}
	\le e^{n_\ell(h-\gamma)} \cdot e^{-d\,n_\ell},
\]
which converges to $0$ when  $\ell\to\infty$. By definition of entropy, this would imply  $h_{\rm top}(\sigma,\varpi(\Theta))\le h-\gamma$, which is a contradiction, proving the first estimate in the lemma.

To prove the second estimate, recall that any pair of $(n,1)$-separated points in $\Sigma_k$ is also $(n,1)$-separated in $\Sigma_k\times\bS^1$.
\end{proof}

Applying the Lemma~\ref{lem:uuuposilon} to $\Theta=\cL_N(\alpha)$, we obtain a number $n_1$. Let $n_0\eqdef \max\{N,n_1\}$, $L_0\eqdef1$, and
\[
	 K_0\eqdef\max_{\ell=0,\ldots,n_0-1}\,\,\max_{X=(\xi,x)\in\Sigma_k\times\bS^1}\,
		\left\{\frac{\lvert (f_{[\xi_0\ldots\,\xi_{\ell-1}]})'(x)\rvert}
			{e^{-\ell(\alpha-\varepsilon_E)}},
			\frac{e^{-\ell(\alpha+\varepsilon_E)}}
			{\lvert (f_{[\xi_0\ldots\,\xi_{\ell-1}]})'(x)\rvert}\right\}.
\]

Hence, for every $m\ge n_0$, again by the lemma, we obtain a set $\fX=\{X_i\}\subset\cL_N(\alpha)$ of $(m,1)$-separated points $X_i=(\xi^i,x_i)$ which has cardinality $\card\fX\ge e^{m(h-\varepsilon_H)}$.  Clearly, the choice of this set $\fX$ depended on $h,\alpha,\varepsilon_H,\varepsilon_E$, and $m$, only.	Thus, we already checked  (i)  in the Skeleton property.

By construction, the sequences $\xi^i$ are $(m,1)$-separated and, in particular, this implies (ii).

Further, the choice of $K_0$ above implies (iii).
Finally, (iv) and (v) follow from Lemma~\ref{lem:idiota}.
This completes the proof of the proposition.
\end{proof}

\begin{proposition}\label{prop:lunch1}
	Consider a skew-product map $F$ as in~\eqref{eq:sp} whose fiber maps are $C^1$ and assume that it satisfies Axioms CEC$\pm(J)$ and Acc$\pm(J)$ for some interval $J\subset\bS^1$.	Let $\mu\in\cM_{\rm erg}$.

Then the Skeleton$\ast$ property relative to $J$ and $\mu$ holds.
\end{proposition}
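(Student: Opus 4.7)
The plan is to build the skeleton by combining the quantitative ergodic estimates of Proposition~\ref{pro:BriKat} with the uniform accessibility bounds provided by Lemma~\ref{lem:idiota} under Axioms Acc$\pm(J)$. Fix the ergodic measure $\mu$ with $h=h(\mu)$ and $\alpha=\chi(\mu)$, a finite family of continuous potentials $\varphi_1,\ldots,\varphi_\ell$, and tolerances $\varepsilon_H\in(0,h)$, $\varepsilon_E>0$. Invoking Lemma~\ref{lem:idiota} twice, Axiom Acc$+(J)$ supplies a universal constant $m_{\rm f}$ and Axiom Acc$-(J)$ supplies a universal constant $m_{\rm b}$, depending only on $J$; these will play the role of the connecting times controlling (iv) and (v).

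Next, apply Proposition~\ref{pro:BriKat} to $\mu$ with $A=\Sigma_k\times\bS^1$ (so $\mu(A)=1$), with the potentials $\varphi_1,\ldots,\varphi_\ell$, and with some fixed $\kappa\in(0,1/4)$. This produces constants $K_0=K_0(\kappa,\varepsilon_E)$ and $n_0=n_0(\kappa,\varepsilon_H)$, together with a set $\Lambda'\subset\Sigma_k\times\bS^1$ of $\mu$-measure at least $1-\kappa$, such that for every $X=(\xi,x)\in\Lambda'$ and every $n\ge 0$ the finite-time Lyapunov bound in (iii) and the Birkhoff-average bound in (vi) hold simultaneously.

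It remains to produce, for every $m\ge n_0$, an $(m,1)$-separated set $\fX=\{X_i\}\subset\Lambda'$ of cardinality at least $(1-\kappa)e^{m(h-\varepsilon_H)}$. Here we bypass the return-to-$A$ feature of part (2) of Proposition~\ref{pro:BriKat}, which plays no role in the Skeleton definition, and rerun its greedy cylinder-covering construction directly: letting $\nu=\varpi_\ast\mu$ and $S'=\varpi(\Lambda')$, we have $\nu(S')>1-\kappa$, and by the Brin--Katok estimate~\eqref{eq:formullanumeasure} every $\nu$-typical level-$m$ cylinder has measure at most $e^{-m(h-\varepsilon_H/2)}$ once $m$ is large. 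Iteratively selecting disjoint level-$m$ cylinders of sequences $\xi^i\in S'$ (as in the proof of Proposition~\ref{pro:BriKat}) yields $M\ge(1-\kappa)e^{m(h-\varepsilon_H)}$ distinct base sequences. Lifting each to some $X_i=(\xi^i,x_i)\in\Lambda'$, the resulting set is $(m,1)$-separated in $\Sigma_k\times\bS^1$ because its base projections are; this delivers (i) (with $L_0=(1-\kappa)^{-1}$) and (ii), while (iii) and (vi) are inherited from membership in $\Lambda'$.

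To close, for each chosen $X_i$ the point $x_i$ admits, by Lemma~\ref{lem:idiota} applied under Acc$+(J)$, a finite sequence $(\theta^i_1\ldots\theta^i_{r_i})$ with $r_i\le m_{\rm f}$ and a point $x_i'\in J$ with $f_{[\theta^i_1\ldots\,\theta^i_{r_i}]}(x_i')=x_i$, giving (iv); and the endpoint $f^m_{\xi^i}(x_i)$ admits, by Lemma~\ref{lem:idiota} applied under Acc$-(J)$, a finite sequence $(\beta^i_1\ldots\beta^i_{s_i})$ with $s_i\le m_{\rm b}$ and $f_{[\beta^i_1\ldots\,\beta^i_{s_i}]}(f^m_{\xi^i}(x_i))\in J$, giving (v). The only genuine adjustment relative to previous work is the adaptation of the construction in Proposition~\ref{pro:BriKat}(2) so that it delivers an $(m,1)$-separated set for \emph{every} $m\ge n_0$ and not merely for some $m$ in a prescribed interval; but once the return requirement is dropped, the greedy cylinder argument applies verbatim to every sufficiently large $m$, so no additional obstacle arises.
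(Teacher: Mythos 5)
Your proof is correct, but it follows a genuinely different route from the paper's. The paper does not take $A=\Sigma_k\times\bS^1$: it first uses Lemma~\ref{lem:idiota} and ergodicity of $\mu$ to manufacture a positive-measure set $A=\Sigma_k\times I''$, where $I''$ is contained both in a forward image $f_{[\theta_1\ldots\,\theta_r]}(J)$ and in a backward image $f_{[\beta_1\ldots\,\beta_s.]}(J)$ of the blending interval, and then exploits precisely the return-to-$A$ feature of Proposition~\ref{pro:BriKat}(2): since each selected orbit piece starts and ends with fiber coordinate in $I''$, items (iv) and (v) come out with a \emph{single fixed} pair of connecting words valid for all skeleton points. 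You instead keep the uniformization step measure-theoretically trivial ($\mu(A)=1$), re-run only the greedy cylinder-counting part of that proof (using~\eqref{eq:formullanumeasure} on $\varpi(\Lambda')$), and obtain (iv)--(v) by applying Lemma~\ref{lem:idiota} pointwise at the two ends of each orbit segment; this is legitimate because the Skeleton definition only asks for connecting sequences of length $\le m_{\rm f}$, $m_{\rm b}$ that may depend on $i$, and nothing in the later horseshoe construction uses more than that bound. What each approach buys: the paper's version gives uniform connecting words and endpoints pinned to a fixed small interval (slightly more than required), at the cost of the ergodicity-based intersection argument and the return-time pigeonhole, which in fact only produces a separated set for \emph{some} $m$ in each window $\{n,\ldots,n(1+t)\}$ rather than for every $m\ge n_0$ as the definition literally demands; your version is more economical and matches the ``for every $m\ge n_0$'' quantifier exactly. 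Two minor bookkeeping remarks: the cylinder-measure estimate~\eqref{eq:formullanumeasure} is part of the \emph{proof} of Proposition~\ref{pro:BriKat} (valid on the set $\Lambda_1$ there), not of its statement, so you should either note that the $\Lambda'$ constructed there already lies in $\Lambda_1$ or intersect your $\Lambda'$ with the Brin--Katok/Egorov set (adjusting $\kappa$); and, like the paper, your argument implicitly assumes $h(\mu)>0$ when invoking Proposition~\ref{pro:BriKat}, the case $h(\mu)=0$ being handled by the same trivial modification the authors indicate elsewhere.
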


\begin{proof}
Let $\varphi_1,\ldots,\varphi_\ell\colon\Sigma_k\times\bS^1\to\bR$ be continuous functions.  Put
\[
	\alpha=\chi(\mu),
	\quad h=h(\mu).
\]

Given $J$, consider the constants $m_{\rm f},m_{\rm b}$ provided by Lemma~\ref{lem:idiota}.
By  this lemma,
 there is some $r\le m_{\rm f}$ and a finite sequence $(\theta_1\ldots\theta_{r})$ such that
\[
	\mu\big(\Sigma_k\times
	f_{[\theta_1\ldots\,\theta_{r}]}(J)\big)>0.
\]
Let $I\eqdef f_{[\theta_1\ldots\theta_{r}]}(J)$.
By this lemma there is also some $s\le m_{\rm b}$ and a finite sequence $(\beta_1\ldots\beta_s)$ such that
\[
	\mu\big(\Sigma_k\times f_{[\beta_1\ldots\,\beta_s.]}(J)\big)>0.
\]
Let $I'\eqdef f_{[\beta_1\ldots\,\beta_s.]}(J)$. Hence, as both sets are of positive measure, by ergodicity
of $\mu$, there is some finite sequence $(\delta_0\ldots\delta_{\ell-1})$ such that
\[
	\mu\big(\Sigma_k\times \big( f_{[\delta_0\ldots\,\delta_{\ell-1}]}(I)\cap I'\big)\big)>0.
\]
Let now $I ''\eqdef  f_{[\delta_0\ldots\,\delta_{\ell-1}]}(I)\cap I'$ and  $A\eqdef\Sigma_k\times I''$. To simplify notation, we continue to denote $(\theta_1\ldots\,\theta_r\delta_0\ldots\delta_{\ell-1})$  by $(\theta_1\ldots\,\theta_r)$.

Fix $\kappa\in(0,\mu(A)/4)$ and $t\in(0,1)$.

By Proposition~\ref{pro:BriKat} applied to $A=\Sigma_2\times I''$ there are $n_0=n_0(\kappa,\gamma)\ge1$, $K_0=K_0(\kappa)>1$, and a set $\Lambda'\subset\Lambda$ satisfying $\mu(\Lambda')>1-\kappa$ such that for every $n\ge n_0$ there is $m\in\{n,\ldots,n(1+t)\}$ and a set  of $(m,1)$-separated points $\{X_i\}\subset A\cap\Lambda'$, $X_i=(\xi^{i},x_i)$, satisfying
\[
	x_i\in I'',\quad
	f_{[\xi_0^{i}\ldots\,\xi_{m-1}^{i}]}(x_i)\in  I''.
\]
Moreover, this set has cardinality
\[
	\card\{x_i\}
	\ge \big(\mu(A)-\kappa\big) \cdot e^{m(h(\mu)-\gamma)}
\]
Letting $L_0\eqdef 1/(\mu(A)-\kappa)$, this shows items (i) and (ii) of the Skeleton$\ast$ property.

Moreover,
for every $n\ge0$ we have
\[
	K_0^{-1}e^{n(\alpha-\varepsilon_E)}
	\le 
	\lvert (f_{[\xi_0^i\ldots\,\xi_{n-1}^i]})'(x_i)\rvert
	\le K_0e^{n(\alpha+\varepsilon_E)},
\]
which shows item (iii).
And for every $j=1,\ldots,\ell$ and $n\ge0$ we have
\[
	-K_0+n(\overline\varphi_j-\varepsilon_E)
	\le \sum_{k=0}^{n-1}\varphi_j(f_{[\xi_0^i\ldots\,\xi_{n-1}^i]}(x_i))
	\le K_0+n(\overline\varphi_j+\varepsilon_E),
\]
which shows item (vi).

By the choice of $I''$, we have
\[
	x_i'
	= f_{[\theta_1\ldots\,\theta_r]}^{-1}(x_i)\in J,
\]
which implies item (iv). By our choice of  $(\beta_1\ldots\beta_s)$, $k\le m_{\rm b}$, such that
\[
	f_{[\beta_1\ldots\,\beta_s]}\big(f_{[\xi_0^i\ldots\,\xi_{k-1}^i]}(x_i)\big)
	= f_{[\xi_0^i\ldots\,\xi_{k-1}^i\beta_1\ldots\,\beta_s]}(x_i)\in J,
\]
which implies item (v). This finishes the proof of the  Skeleton$\ast$ property.
\end{proof}

\section{Multi-variable-time horseshoes}\label{sec:mulvarhor}

In this section we introduce multi-variable-time horseshoes which will provide the essential pieces of our construction.
Here we refer to a concept similar to the ``interval" horseshoes in the sense of Misiurewicz and Szlenk~\cite{MisSzl:80} rather than the ``standard'' one in the sense of Smale. The connection with the skeletons from Section~\ref{sec:skeletons} will become clear in  Section~\ref{sec:proofmainprops}.
A key step is to estimate the entropy of these objects, see Proposition~\ref{prop:wilvarhorse}.

\begin{definition}[Multi-variable-time horseshoes]
Let $X$ be a compact metric space and $T\colon X\to X$ a local homeomorphism. \\[0.1cm]
\textbf{Markov rectangles and transition maps.} Let $\{S_i\}_{i=1}^M$ be a family of disjoint compact subsets of $X$ that we call \emph{Markov rectangles}. Assume that there are positive integers $t_{\rm min},t_{\rm max}$, $t_{\rm min}\le t_{\rm max}$, such that for every  $i,j\in\{1,\ldots,M\}$
\begin{itemize}
\item there exists a \emph{transition times} $t_{ij} \in \{t_{\rm min},\ldots,t_{\rm max}\}$ so that $S_j \subset T^{t_{ij}}(S_i)$ and
\item  the \emph{transition map} $T^{t_{ij}}|_{S_i\cap T^{-t_{ij}}(S_j)}$ is injective.
\end{itemize}
Let (compare Figure~\ref{fi.neuuhorse})
\[
	S_{ij}
	\eqdef T^{-t_{ij}}(S_j).
\]
\begin{figure}
\begin{minipage}[h]{\linewidth}
\centering
 \begin{overpic}[scale=.35]{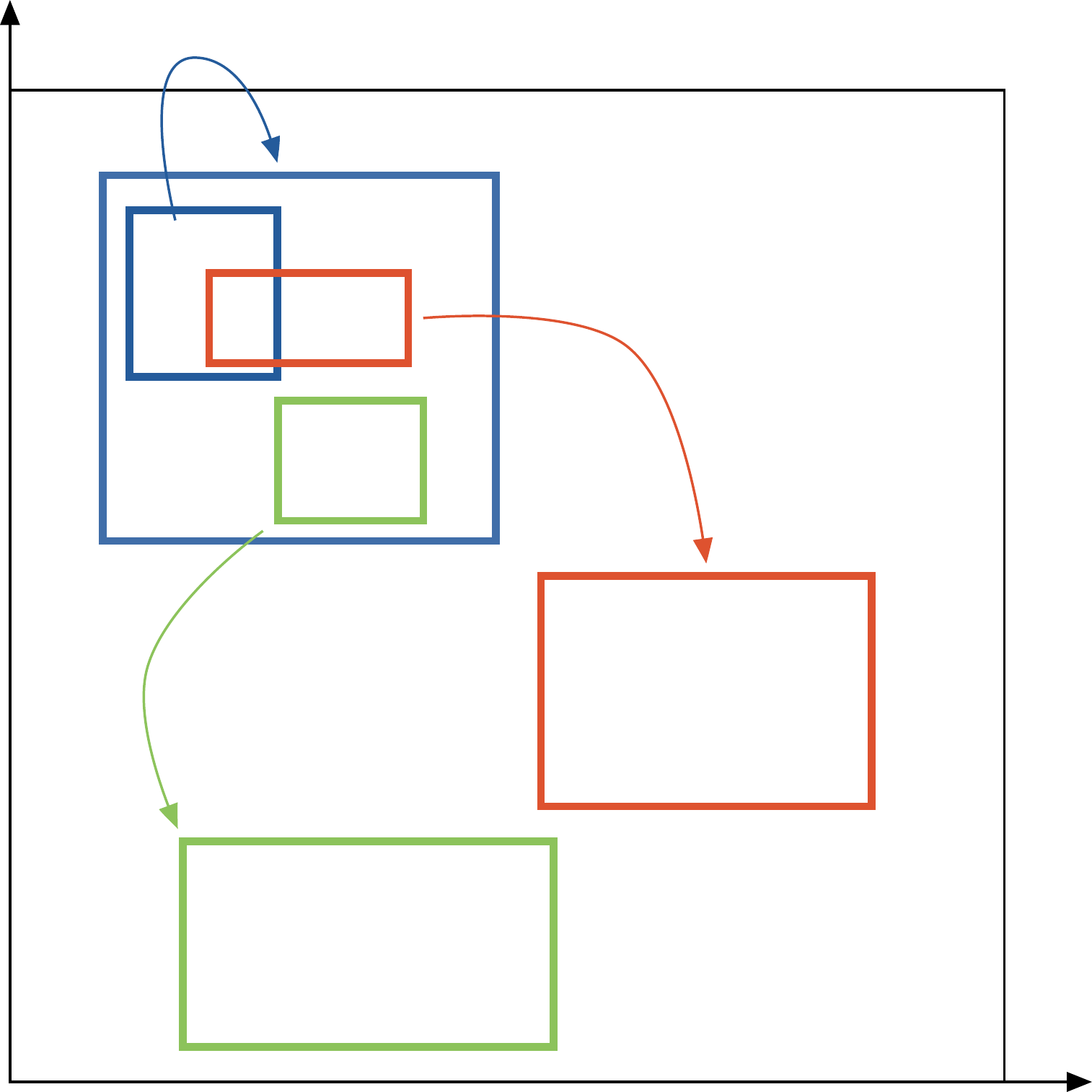}
  			\put(104,0){\small$\bS^1$}
  			\put(-15,95){\small$\Sigma_M^+$}
  			\put(2,80){\small\textcolor{blue}{$S_1$}}
  			\put(9,18){\small\textcolor{green}{$S_2$}}
  			\put(42,43){\small\textcolor{red}{$S_3$}}
  			\put(33,78){\small\textcolor{red}{$S_{13}$}}
  			\put(58,73){\small\textcolor{red}{$T^{t_{13}}$}}
  			\put(12,96){\small\textcolor{blue}{$T^{t_{11}}$}}
  			\put(12,60){\small\textcolor{blue}{$S_{11}$}}
  			\put(15,54){\small\textcolor{green}{$S_{12}$}}
  			\put(16,38){\small\textcolor{green}{$T^{t_{12}}$}}
 \end{overpic}
\caption{Construction of the multi-variable-time horseshoe, before choosing admissible transitions.}
\label{fi.neuuhorse}
\end{minipage}
\end{figure}	

\noindent\textbf{Coding the allowed transitions.}
Consider a subshift of finite type $\Sigma_A\subset\{1,\ldots,M\}^\bZ$ defined as follows:
For each $i$ let $t=t(i)$ be a length%
\footnote{This choice is in general not unique but this fact is inessential for our purposes, see Remark~\ref{rem:remakrr}} %
 for which the number of $j$'s  with transition times of length $t_{ij}=t$ is maximal. Consider the
 transition matrix $A=(a_{ij})_{i,j=1}^M$, where
\begin{equation}\label{eq:transitions}
	a_{ij}\eqdef
	\begin{cases}
		1&\text{ if } t_{ij}=t(i),\\
			0&\text{ otherwise}.
	\end{cases}
\end{equation}
This defines a transition matrix for a subshift of finite type, that is, the subset of all
\emph{$A$-admissible} sequences:
\[
	\Sigma_A
	\eqdef \big\{(c_1c_2\ldots)\in\{1,\ldots,M\}^\bZ\colon
		a_{c_i c_{i+1}}=1\text{ for all }i\ge1\big\}.
\]	
\textbf{Construction of the invariant set.}
For a 
given finite $A$-admissible sequence 
 $(c_0\ldots c_{n-1})$ we
define inductively
\[
	S_{c_0\ldots c_{k-1}}
	\eqdef T^{-t_{c_{k-2}c_{k-1}}}(S_{c_0\ldots c_{k-2}}).
\]
Let
\[
\Gamma'
	\eqdef \bigcap_{n\ge1}\,\bigcup_{[c_0\ldots\,c_{n-1}]}
	S_{c_0\ldots c_{n-1}},
\]
where the union is taken over all $n$th level cylinders of $\Sigma_A$.
Let
\[
	\Gamma
	\eqdef \bigcup_{k=0}^{t_{\rm max}-1} T^k(\Gamma').
\]
We call  $(T,\Gamma)$ a \emph{multi-variable-time horseshoe}.
\end{definition}

Below we will prove the following proposition:

\begin{proposition}\label{prop:wilvarhorse}
Let $X$ be a compact metric space and $T\colon X\to X$ a local homeomorphism.
Assume that $(T,\Gamma)$ is a multi-variable-time horseshoe as above.
	Then the topological entropy of $T\colon\Gamma\to\Gamma$ satisfies
\[
	 \frac{\log M - \log(t_{\rm max}-t_{\rm min}+1)}{ t_{\rm max}}\le 
	h_{\rm top}(T,\Gamma)
	\le \frac{\log M}{t_{\rm min}}.
\]	
\end{proposition}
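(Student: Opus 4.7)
The plan is to transfer the counting problem to the subshift $\Sigma_A$ via the natural coding map $\pi\colon\Sigma_A\to\Gamma'$ sending $(c_0c_1\ldots)$ to the unique point in $\bigcap_n S_{c_0\ldots c_{n-1}}$ (the intersection is a nested sequence of nonempty compact sets since each transition map is a well-defined injection from $S_{ij}$ onto $S_j$). Separation in $\Sigma_A$ corresponds to separation in $\Gamma'$ because the Markov rectangles $\{S_i\}_{i=1}^M$ are pairwise disjoint compact sets, so there is a uniform $\varepsilon_0>0$ such that any two points lying in different rectangles are $\varepsilon_0$-separated; hence two finite admissible words that differ in their $k$-th symbol give points whose $T$-orbits differ at the iterate $t(c_0)+\cdots+t(c_{k-1})$.

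For the upper bound $h_{\rm top}(T,\Gamma)\le\log M/t_{\min}$, I would first reduce to $\Gamma'$, since $\Gamma=\bigcup_{k=0}^{t_{\max}-1}T^k(\Gamma')$ is a finite union of homeomorphic copies and entropy is stable under such unions. Given an $(n,\varepsilon_0)$-separated set in $\Gamma'$ under $T$, the corresponding admissible words in $\Sigma_A$ must differ within the first $\lceil n/t_{\min}\rceil$ symbols (since each block $t(c_i)\ge t_{\min}$). Thus the cardinality of such a set is at most the number of length-$\lceil n/t_{\min}\rceil$ cylinders in $\Sigma_A$, which is at most $M^{\lceil n/t_{\min}\rceil}$. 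Dividing the logarithm by $n$ and sending $n\to\infty$ yields the upper bound (and for smaller $\varepsilon$ one uses uniform continuity of $T$ on the compact set $\Gamma$ plus the fact that the construction depends only on which rectangle is visited at each transition time).

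For the lower bound, the key observation is a pigeonhole argument: among the $M$ possible successors $j$ of a state $i$, the transition times $t_{ij}$ take values in a set of cardinality $t_{\max}-t_{\min}+1$, so by the maximizing choice of $t(i)$ the row of $A$ corresponding to $i$ contains at least
\[
N\eqdef\Big\lceil\frac{M}{t_{\max}-t_{\min}+1}\Big\rceil
\]
ones. Therefore the number of $A$-admissible words of length $n$ starting from any fixed symbol is at least $N^{n-1}$. Each such word determines a distinct point of $\Gamma'$, and any two of them become $\varepsilon_0$-separated under $T$ after at most $(n-1)t_{\max}$ iterates. Consequently
\[
h_{\rm top}(T,\Gamma)\ge\limsup_{n\to\infty}\frac{\log N^{n-1}}{(n-1)t_{\max}}
=\frac{\log N}{t_{\max}}\ge\frac{\log M-\log(t_{\max}-t_{\min}+1)}{t_{\max}},
\]
as claimed.

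The main obstacle, modest in this case, is checking the separation claim rigorously: one must verify that the variable time steps do not collapse distinct admissible sequences to the same $T$-orbit and that the $\varepsilon_0$ coming from disjointness of the rectangles indeed witnesses $(n,\varepsilon_0)$-separation with the correct $n$. Both follow from the injectivity of each transition map $T^{t_{ij}}|_{S_{ij}}$ together with the uniform gap between the compact rectangles.
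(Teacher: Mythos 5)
Your lower bound is correct and is essentially the paper's own argument: the pigeonhole estimate on the rows of $A$ (each row has at least $M/(t_{\rm max}-t_{\rm min}+1)$ ones), combined with the observation that, because every admissible transition out of a symbol $i$ uses the same time $t(i)$, two admissible words that first disagree at position $k$ yield orbits lying in different rectangles at the \emph{common} time $t(c_0)+\cdots+t(c_{k-1})\le (n-1)t_{\rm max}$, hence points that are $\varepsilon_0$-separated there. This is exactly the paper's separation lemma plus its counting of $(n\,t_{\rm max},d)$-separated points, so this half needs no changes.

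The upper bound, however, contains a genuine gap. You bound the cardinality of an $(n,\varepsilon_0)$-separated subset of $\Gamma'$ by the number of length-$\lceil n/t_{\rm min}\rceil$ cylinders of $\Sigma_A$, which implicitly assumes that two points of $\Gamma'$ with the same coding cannot be $(n,\varepsilon_0)$-separated. Nothing in the hypotheses gives this: the rectangles $S_i$, and worse, their images at the intermediate times between transition times, are not assumed to be small, and the coding map is not injective (the paper explicitly remarks that $T$ is not assumed expansive and that a symbolic sequence may correspond to several points). Within a single coding class the number of $(n,\varepsilon)$-separated points can grow exponentially, so the step cannot be rescued by uniform continuity of $T$, and passing to smaller $\varepsilon$ (which the definition of entropy requires) only makes matters worse. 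Indeed, under the bare hypotheses the claimed count even fails: take $T=\sigma\times\sigma^+$ on $X=\Sigma_2\times\Sigma_2^+$, $S_i=\Sigma_2\times[i]$ for $i=0,1$, and $t_{ij}=1$; all the requirements (disjoint compact rectangles, covering, injectivity of the transition maps) hold with $M=2$ and $t_{\rm min}=t_{\rm max}=1$, yet $\Gamma=X$ carries entropy $2\log 2$, exceeding $\log M/t_{\rm min}$, because the $\Sigma_2$-factor is an entropy-carrying fiber of the coding. To be fair, the paper's own upper-bound argument is a one-line transfer of separated sets from $\Gamma$ to $\Gamma'$ and is silent on the same point; what legitimizes the bound in the paper's application is extra structure your argument would need to invoke: there the rectangles are $S_i=[\xi^i_0\ldots\xi^i_{m-1}]\times I_i$ with pairwise distinct base words, so the entire base itinerary of a point of $\Gamma'$ is determined by its coding, and the circle fibers contribute no entropy (Lemma~\ref{lem:eeentropu}, resp.\ \eqref{eq:entropyproj}). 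Making that input explicit is the missing ingredient in your upper bound.
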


Before going to the details of the proof of this proposition we make some comments
on the above definition.

\begin{remark}\label{rem:remakrr}
Note that this subshift $\Sigma_A$ is not necessarily transitive. However, there exists%
\footnote{Recall that the subshift of finite type is represented by a transition graph and that transitive invariant subsets correspond to  irreducible components in this graph  (see, for example,~\cite[Chapter 4.4]{LinMar:95}). Since the graph has finitely many edges only, there are at most finitely many such components. Now apply countable stability of entropy, see~\eqref{eq:entcounstab}.}
 a subshift of finite type of $\Sigma_A$ which is transitive and has the same topological entropy, hence we will for simplicity assume that $\Sigma_A$ is transitive.

 We remark that our choice of $A$ is not unique: given $i$ there could exist more than one length $t$ for which the number of $j$'s with transition time $t$ is maximal. But any choice would not alter our estimates of entropy.
\end{remark}

\begin{remark}
Let us explain the reason to study some subsystem only. Clearly, the inverse maps $T^{-t_{ij}}\colon S_j\to S_{ij}=S_i\cap T^{-t_{ij}}(S_j)$ are all well-defined. However, the sets $S_{ij}$ are, in general, not pairwise disjoint%
\footnote{Consider, for example, the two one-point sets $S_1=\{P\}$, $S_2=\{Q\}$ and the map $T\colon\{P,Q\}\to\{P,Q\}$ defined by $T(P)=Q$, $T(Q)=P$ and let $t_{11}=t_{22}=2$, $t_{12}=t_{21}=1$.
	The map $T$ has entropy zero.	
	The associated multi-variable-time horseshoe also does.
	Note that the bound in Proposition~\ref{prop:wilvarhorse} is sharp in this case.}%
. Or choice of admissible sequences~\eqref{eq:transitions} guarantees that for every $A$-admissible pairs $ij$ and $i\ell$, $j\ne\ell$, the sets $S_{ij}$ and $S_{i\ell}$ are indeed disjoint  and that hence our symbolic description of the horseshoe is indeed well-defined. For example, in Figure~\ref{fi.neuuhorse}, $t_{11}\ne t_{13}$ and at most one of the transitions $11$ or $13$ will be admissible in the sense of~\eqref{eq:transitions}.
\end{remark}

\begin{remark}
	To relate the above defined object to other contexts, note that the various transition times $t_{ij}$ for $A$-admissible transitions $ij$ can be related to a so-called \emph{jump transformation} $\fT\colon\Gamma'\to\Gamma'$ by setting
\[
	\fT(x)
	\eqdef T^{t(i)}(x)
	\quad\text{ for every }\quad
	x\in\Gamma'\cap S_i,
\]
generalizing the classical concept of \emph{first return maps}. Such transformations were considered, for example, by Schweiger~\cite{Sch:81}.
By construction, $(\sigma,\Sigma_A^+)$ is conjugate to $(\fT,\Gamma')$ via $\pi_A$.

The multi-variable-time horseshoe is a more general version of the variable-time horseshoe of Luzzatto and Sanchez-Salas~\cite{LuzSan:13}. In their approach, given $i$, the transition times are $t_{ij}=t(i)$  constant for all $j$.
\end{remark}

\begin{remark}
It is useful to introduce a symbolic description of the system.
By the above, for every $x\in\Gamma'$ there is a unique sequence $\underline c\in\Sigma_A^+$ such that $x\in \bigcap_{n\ge1}	S_{c_0\ldots c_{n-1}}$.
Hence, we have naturally given a projection
\[
	\pi_A\colon \Gamma'\to\Sigma_A^+,\quad
	\pi(x)\eqdef \underline c
	\quad\text{ if } \quad
	x\in  \bigcap_{n\ge1}	S_{c_0\ldots c_{n-1}}.
\]

Observe that we do not assume that $T$ is expansive. In particular, there might exist a symbolic sequence $\underline c$ which corresponds to more than one point in $\Gamma$.

Naturally, the symbolic description applies to the set $\Gamma'$ only, which is invariant under the jump transformation only.

Finally, note that $\Gamma\subset X$ is a compact $T$-invariant set. Indeed, by construction, $\Gamma$ is the forward orbit of the set $\Gamma'$ by $T$.
It is the smallest $T$-invariant set containing $\Gamma'$.
\end{remark}

\begin{proof}[Proof of Proposition~\ref{prop:wilvarhorse}]
We begin with the following auxiliary result. Let $d$ be the minimum of the Hausdorff distance between the rectangles $S_i$.

\begin{lemma}
	Given a pair of $A$-admissible sequences $\underline c,\underline c'$, for  every $\ell\ge0$ with $c_n\ne c_n'$ for some $n\in\{0,\ldots,\ell-1\}$, every pair of points $x\in S_{c_0\ldots c_{\ell-1}}$ and
	$y \in S_{c_0'\ldots c_{\ell-1}'}$ are $(n\, t_{\rm{max}},d)$-separated.
\end{lemma}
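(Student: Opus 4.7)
The plan rests on the key feature of the matrix $A$ defined in~\eqref{eq:transitions}: for any $A$-admissible word, the transition time out of state $c_j$ is forced to equal $t(c_j)$ and hence depends on $c_j$ alone, independently of the successor $c_{j+1}$. This ``decoupling'' is precisely what makes the visit times intrinsic functions of the symbol sequence, and it is the only reason the lemma can compare the two orbits at a \emph{common} time step.

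First I would unwind the inductive definition of $S_{c_0\ldots c_{k-1}}$ to record that, for any admissible word and any $x \in S_{c_0\ldots c_{\ell-1}}$, setting
\[
  \tau_k \eqdef \sum_{j=0}^{k-1} t(c_j), \qquad k = 0, \ldots, \ell-1,
\]
one has $T^{\tau_k}(x) \in S_{c_k}$, together with the trivial bound $\tau_k \le k\, t_{\max}$.

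Next, given $x \in S_{c_0\ldots c_{\ell-1}}$ and $y \in S_{c_0'\ldots c_{\ell-1}'}$, take $n$ to be the smallest index with $c_n \ne c_n'$ (if a larger $n$ is the one singled out by the statement, the smallest one still works and yields a stronger conclusion). Because the two admissible words share the prefix $c_0 \ldots c_{n-1}$, the above formula gives $\tau_n(x) = \tau_n(y) \le n\, t_{\max}$. At this common time, $T^{\tau_n}(x) \in S_{c_n}$ and $T^{\tau_n}(y) \in S_{c_n'}$ lie in two different Markov rectangles. Since the rectangles $S_1, \ldots, S_M$ are pairwise disjoint compact sets and $d$ is the minimum mutual distance between distinct rectangles, one concludes $d\bigl(T^{\tau_n}(x), T^{\tau_n}(y)\bigr) \ge d$, and hence $x, y$ are $(n\, t_{\max}, d)$-separated (up to the usual off-by-one in the convention for Bowen separation, which is harmless).

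There is no genuine obstacle; the entire content is the bookkeeping of the visit times $\tau_k$ and the observation that admissibility aligns these times for the two orbits. Without the restriction encoded in~\eqref{eq:transitions}, the orbits of $x$ and $y$ would in general enter $S_{c_n}$ and $S_{c_n'}$ at \emph{different} absolute times, and one could no longer read off a Bowen-metric separation at scale $d$ from the disjointness of the rectangles.
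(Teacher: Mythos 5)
Your proposal is correct and follows essentially the same route as the paper: the key point in both is that admissibility (the choice~\eqref{eq:transitions}) forces the transition times along a common prefix to coincide, so at the first index where the words differ the two orbits sit in distinct rectangles $S_{c_n}$, $S_{c_n'}$ at a common time $\le n\,t_{\rm max}$, giving separation by $d$. The paper phrases this as an induction on $\ell$, while you make the visit times $\tau_k$ explicit, but the argument is the same.
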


\begin{proof}
If $c_0\ne c_0'$ then the orbits are (at least) $d$-separated (they start in points which are in disjoint sets $S_i$ and thus at distance at least $d$).
Otherwise, if $c_0=c_0'$ and $c_1\ne c_1'$ then orbits get separated after time $t_{c_0c_1}$ (observe that by our choice of admissible transitions $A$ we have $t_{c_0c_1}=t_{c_0'c_1'}$) and hence are $(t_{\rm max},d)$-separated. To finish the proof,  continue by induction on $\ell$.
\end{proof}

Continuing with the proof of the proposition,
observe that, by the pigeonhole principle, for every $i\in\{1,\ldots,M\}$ the number of indices $j$ with $a_{ij}=1$ is bounded from below by $M/(t_{\rm max}-t_{\rm min}+1)$ and, trivially, from above by $M$. This immediately implies that the entropy satisfies
\begin{equation}\label{eq:laaabel}
	\log \frac{M}{t_{\rm max}-t_{\rm min}+1}\le 
	h_{\rm top}(\sigma,\Sigma_A^+)
	\le \log M.
\end{equation}		
Recall that we always have
\[
	h_{\rm top}(T,\Gamma)
	\ge \liminf_{n\to\infty}\frac{1}{n}\log s_n(d,\Gamma'),
\]
where $s_n(d,\Gamma')$ denotes the maximal cardinality of a $(n,d)$-separated set of points in $\Gamma'$. Denote by $s_n(1,\Sigma_A^+)$ the analogous number, that is, the number of distinct cylinders of level $n$ in $\Sigma_A^+$.
Observe that $s_{nt_{\rm max}}(d,\Gamma')\ge s_n(1,\Sigma_A^+)$. Therefore,
\[	
	h_{\rm top}(T,\Gamma)
	\ge \liminf_{m\to\infty}\frac{1}{m\,t_{\rm max}}\log s_m(1,\Sigma_A^+)
	= \frac{1}{t_{\rm max}}h(\sigma|_{\Sigma_A^+}),
\]	
To obtain the equality, just recall that $\sigma|_{\Sigma_A^+}$ is a subshift of finite type. The lower bound  now follows from~\eqref{eq:laaabel}

To get the upper bound, note that there is a universal constant $K$ such that for every $(n,d)$-separated set of points in $\Gamma$ there is a $(n+t_{\rm max},Kd)$-separated set in $\Gamma'$. With this in mind, similarly we can conclude
\[
	h_{\rm top}(T,\Gamma)
	\le \frac{1}{t_{\rm min}}\log M.	
\]
This proves the proposition.
\end{proof}

\section{Multi-variable-time horseshoes in our setting}\label{sec:proofmainprops}

In this section, we consider a step skew-product $F$ with $k$ circle fiber maps 
as in~\eqref{eq:sp} and prove Theorems~\ref{teo:PES1}--\ref{teo:mimick}.

\subsection{Main ingredients and sketch of the construction}\label{sec:sketch}
The general idea is to use the Skeleton property together with CEC$+$ to construct a multi-variable-time horseshoe with the desired properties. 
In the entire Section~\ref{sec:proofmainprops} we consider an interval $J\subset\bS^1$ such that Axiom CEC$+(J)$ holds with associated constants $K_1,\ldots,K_5$. Moreover, we assume that the Skeleton property holds relative to $J$ and numbers $h\ge0$ and $\alpha\ge0$ with connecting times $m_{\rm b},m_{\rm f}\ge1$. For any set of appropriate quantifiers, this gives a skeleton $\fX=\{X_i\}$, $X_i=(\xi^i,x_i)$.

We first briefly sketch the construction of  multi-variable-time horseshoes which will provide the claimed transitive hyperbolic sets, more details are given below.

Items (iv) and (v) of the Skeleton property provide us with a  family of  itineraries
\begin{equation}\label{eq:itinin}
	(\zeta^i_0\ldots\zeta^i_{m_i-1})
	\eqdef
	(\theta^i_1\ldots\theta^i_{r_i}\xi^i_0\ldots\xi^i_{m-1}\beta^i_1\ldots\beta^i_{s_i})
	,\quad
	m_i
	\eqdef r_i+m+s_i,
\end{equation}
where $r_i\le m_{\rm f},s_i\le m_{\rm b}$ and a family of points $\{x_i\}\subset\bS^1$ and $\{x_i'\}\subset J$ satisfying
\begin{equation}\label{eq:saccomerda}
	f_{[\theta^i_1\ldots\,\theta^i_{r_i}]}(x_i')=x_i
	\quad\text{ and }\quad
	f_{[\xi^i_0\ldots\,\xi^i_{m-1}\beta^i_1\ldots\,\beta^i_{s_i}]}(x_i)\in J,
\end{equation}
where $m$ will be sufficiently big and specified below. For certain $\varepsilon_H$, we also have
\[
		M\eqdef\card\fX
	\ge L_0^{-1}e^{m(h-\varepsilon_H)}.
\]
Without loss of generality, as we can simply disregard some sequences, we can assume 
\begin{equation}\label{eq:saccoentropy}
	L_0^{-1}e^{m(h-\varepsilon_H)}\le M\eqdef\card\fX\le L_0e^{m(h+\varepsilon_H)} .
\end{equation}

To obtain the multi-variable-time horseshoe
we consider the compact metric space $\Sigma_k^+\times\bS^1$
and
 for each
 $i\in\{1,\ldots,M\}$ we consider the Markov rectangles
\begin{equation}\label{eq:MarRec}
	S_i
	= [\xi^i_0\ldots\xi^i_{m-1}]\times I_i,
\end{equation}
where $I_i\subset\bS^1$ are sufficiently small (according to controlled distortion) intervals each centered at its corresponding point $x_i$.
Axiom CEC$+(J)$  gives finite expanding and covering sequences $(\eta^i_0\ldots\eta^i_{\ell_i-1})$. 
We consider the projection to  ``the unstable direction''
\[
	\widehat\pi\colon\Sigma_k\times\bS^1\to\Sigma_k^+\times\bS^1,
	\quad\widehat\pi(\xi^-.\xi^+,x)\eqdef (\xi^+,x)
\]
and let
\[
	T\colon X\to X,\quad T\eqdef\widehat\pi\circ F.
\]	
Then we consider the  concatenations  and transition times
\begin{equation}\label{e.carnaval}
	(\xi^i_0\ldots\xi^i_{m-1}\beta^i_1\ldots\beta^i_{s_i}
		\eta^i_0\ldots\eta^i_{\ell_i-1}\theta^j_1\ldots\theta^j_{r_j}),\quad
	t_{ij}\eqdef m+s_i+\ell_i+r_j
\end{equation}
having the covering property
\begin{equation}\label{eq:cobrindo}
	T^{t_{ij}}(S_i)
	= \Sigma_k^+\times
		f_{[\xi^i_0\ldots\,\xi^i_{m-1}\beta^i_1\ldots\,\beta^i_{s_i}
			\eta^i_0\ldots\,\eta^i_{\ell_i-1}\theta^j_1\ldots\,\theta^j_{r_j}]}(I_i)
	\supset \Sigma_k^+\times I_j
	\supset S_j.
\end{equation}
This defines a multi-variable-time horseshoe $\Gamma\subset\Sigma_k^+\times\bS^1$.
 This set symbolically extends in a unique way to a compact $F$-invariant set $\widehat\Gamma\subset\Sigma_k\times\bS^1$ which will give the set in the theorems.
\smallskip

We now sketch the content of the following subsections which contain the steps to prove Theorems~\ref{teo:PES1}--\ref{teo:PES2}.
We first explain a bit more precisely the construction of the multi-variable-time horseshoes using the Skeleton property. Then we will estimate from below their entropy based on Proposition~\ref{prop:wilvarhorse}.
The estimate of the exponent (and Birkhoff averages of a family of potentials) will also be a result of the explicit construction.

In Section~\ref{sec:quantify} we select some quantifiers.
In Section~\ref{ssec:selecting} we choose the finite sequences $(\theta^i_1\ldots\theta^i_{r_i}),(\xi^i_0\ldots\xi^i_{m-1}),(\beta^i_1\ldots\beta^i_{s_i}),$ and $(\eta^i_1\ldots\eta^i_{\ell_i})$ in~\eqref{e.carnaval}. We also choose  some intermediate intervals $I_i '$ which eventually lead to the definition of the intervals $I_i$.  Concatenating appropriate blocks of finite sequences as in~\eqref{e.carnaval}, we define the  maps $T^{t_{ij}}$ (see Figure~\ref{fig.iterates}). This will complete the definition of the Markov rectangles and transition maps, that is, of the multi-variable-time horseshoes.
%
In Section~\ref{ssec:exponenting} we estimate the Lyapunov exponents.
In Section~\ref{ssec:entropying} we estimate the topological entropy from below. In Section~\ref{ssec:birkhoff} we estimate Birkhoff averages in order to derive the weak$\ast$ approximation. At the end of Section~\ref{sec:proofmainprops} we complete the proofs of Theorems~\ref{teo:PES1} and~\ref{teo:PES2}.


\subsection{Choosing quantifiers}\label{sec:quantify}

As a reference point, the quantifiers are chosen in the following order: Given the interval $J$  and numbers $h,\alpha,\gamma,\lambda$, we choose $\varepsilon,$ $\varepsilon_E,\varepsilon_H$, and $\varepsilon_D$. Then we will choose $m$ sufficiently large which allows us to choose $\delta_0$ small.

Given $h>0$ and $\alpha\ge0$, fix $\gamma\in(0,h)$ and $\lambda>0$.

We let $\varepsilon_H,\varepsilon_E$ much smaller than $\gamma,\lambda$. Associated to these numbers, by the Skeleton property, there are $K_0,L_0\ge1$ and $n_0\ge1$ such that for every $m\ge n_0$ there is a skeleton $\fX=\fX(h,\alpha,\varepsilon_H,\varepsilon_E,m)=\{X_i\}$ with $X_i=(\xi^i,x_i)$. Note that $m$ can be chosen arbitrarily large.

Recall the definition of the universal constant $\D$ of the IFS  in~\eqref{def:cunifconst}
and the numbers $m_{\rm b}$ and $m_{\rm f}$ in the Skeleton property. 
Let
\begin{equation}\label{eq:K0hat}
	\widehat K_0
	\eqdef K_0 \D^{m_{\rm b}+m_{\rm f}}.
\end{equation}

We now take $\varepsilon_D>0$ and $\delta_0>0$ satisfying the assumption in 
distortion Corollary~\ref{cor:distortionC1} applied to $\varepsilon_D$ such that
\begin{equation}\label{defcond:K1K4delta0}
\delta_0 < \exp(-m \sqrt{\varepsilon_D + \varepsilon_E\,}).
\end{equation}
In particular, for sufficiently large $m$, the number $\delta_0$ is much smaller than
$\gamma$ and $\lambda$ and also satisfies
\begin{equation}\label{defcond:K1K4}
	\delta_0
	<\min\{K_1, K_4\}.
\end{equation}
Moreover, for sufficiently large $m$, there is some $\vartheta>0$ such that we have
\begin{equation}\label{defcond:Kfinal}
	K_2K_5\lvert\log\delta_0\rvert - m(\varepsilon_E+\varepsilon_D)
		- \log \widehat K_0
		+ K_3K_5	
	\ge\vartheta	
\end{equation}
From~\eqref{defcond:K1K4delta0}, when $\varepsilon_D$ was initially chosen sufficiently small  and $m$ is sufficiently large, we can also guarantee that
\begin{equation}\label{defcond:vareps1}
	\varepsilon_1
	\eqdef \frac{\lvert\log \delta_0\rvert}{m}
	\ll \min\{\gamma,\lambda\}.
\end{equation}

\subsection{Choosing the Markov rectangles and transition times of the horseshoe}\label{ssec:selecting}

We now specify the rectangles~\eqref{eq:MarRec}. Recall the family of points $\{x_i\}\subset\bS^1$ and $\{x_i'\}\subset J$ given in~\eqref{eq:saccomerda} and itineraries in~\eqref{eq:itinin}
obtained from the Skeleton property.
\begin{figure}
\begin{minipage}[h]{\linewidth}
\centering
 \begin{overpic}[scale=.50]{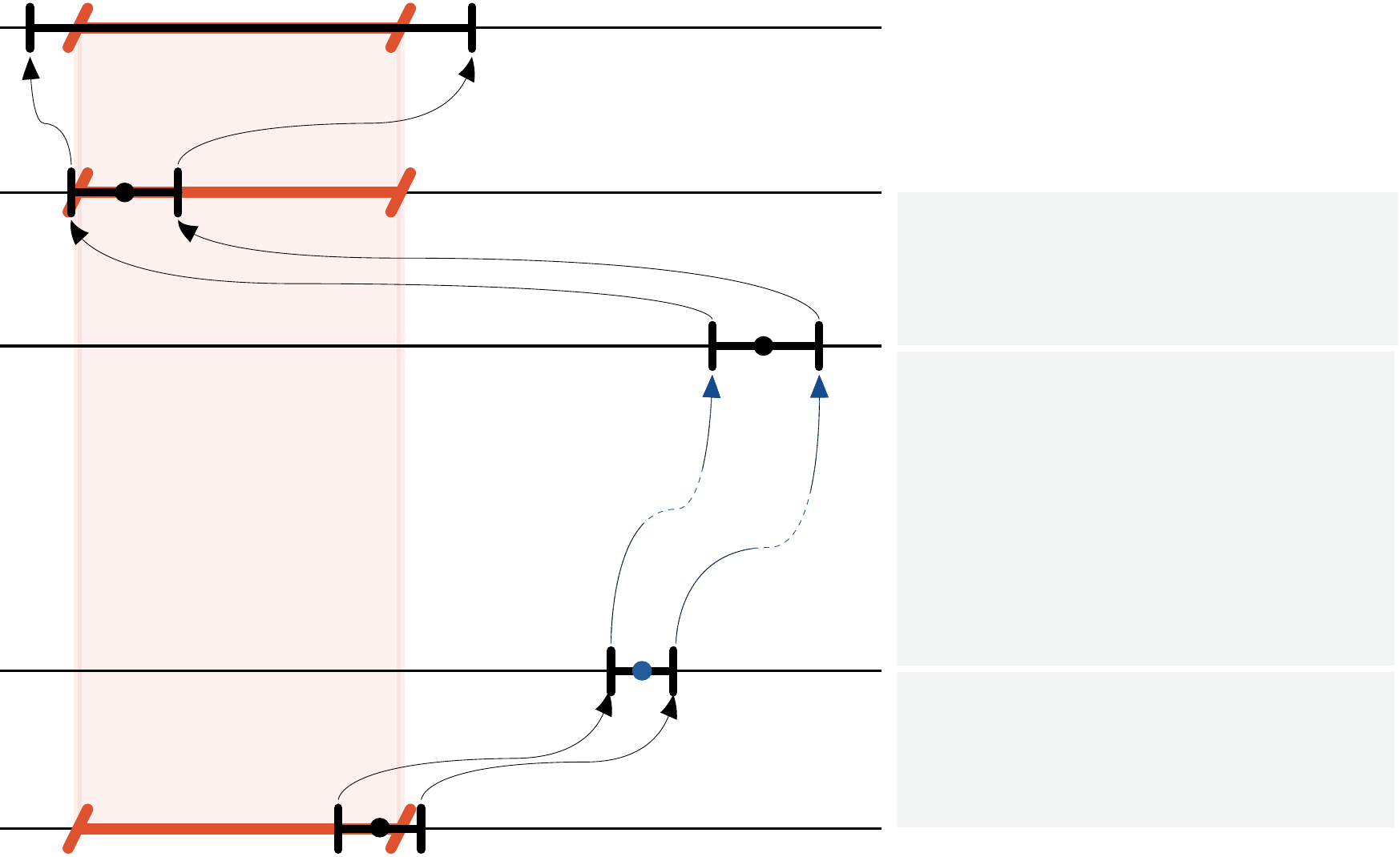}
 			\put(-20,8){\small$f_{[\theta^i_1\ldots\,\theta^i_{r_i}]}$}
 			\put(-20,24){\small$f_{[\xi^i_0\ldots\,\xi^i_{m-1}]}$}
 			\put(-20,42){\small$f_{[\beta^i_1\ldots\,\beta^i_{s_i}]}$}
 			\put(-20,54){\small$f_{[\eta^i_0\ldots\,\eta^i_{\ell_i-1}]}$}
  			\put(26,-3){\small$x_i'$}
  			\put(31,-2){\small$I_i'$}
  			\put(44.5,9.5){\small$x_i$}
  			\put(49.5,9.5){\small$I_i$}
  			\put(0,43){\small$H_i'$}
  			\put(66,25){\small$(m,1)$-separated}
			\put(66,21){\small orbit pieces}
  			\put(66,53){\small CEC$+(J)$}
  			\put(66,42){\small Acc$-(J)$}
  			\put(66,6){\small Acc$+(J)$}
  			\put(-4,1){\small$\bS^1$}
  			\put(6,-3){\small\textcolor{red}{$J$}}
			\put(101,3){\small Skeleton}
  \end{overpic}
\caption{}
\label{fig.iterates}
\end{minipage}
\end{figure}
Define the auxiliary intervals
\begin{equation}\label{defIiprime}
	I_i'
	\eqdef B\big(x_i',\delta_0\widehat K_0^{-1}e^{-m(\alpha+\varepsilon_E+\varepsilon_D)}\big)
\end{equation}
which finally lead to  the intervals $I_i$
defining   the rectangles~\eqref{eq:MarRec}, compare Figure~\ref{fig.iterates}.
We now are going to verify that with this choice, these rectangles have the covering property~\eqref{eq:cobrindo}. By doing so, we will also collect some estimates needed to estimate entropy and exponents.

Recall the definition of $\widehat K_0$ in~\eqref{eq:K0hat}
and the skeleton sequences $(\zeta^i_0\ldots\,\zeta^i_{m_i-1})$ 
in \eqref{eq:itinin}, with $m_i=m+s_i+r_i$ and  $r_i\le m_{\rm f},s_i\le m_{\rm b}$.
By Corollary~\ref{cor:distortionC1}, for every $x\in I_i'$ we have
\begin{equation}\label{eq:minmax}
	 \widehat K_0^{-1}e^{m(\alpha-\varepsilon_E-\varepsilon_D)}
	\le \lvert(f_{[\zeta^i_0\ldots\,\zeta^i_{m_i-1}]})'(x)\rvert
	\le  \widehat K_0e^{m(\alpha+\varepsilon_E+\varepsilon_D)}.
\end{equation}
Let now
\[
	H_i'
	\eqdef f_{[\zeta^i_0\ldots\,\zeta^i_{m_i-1}]}(I_i')
\]
and observe that this interval intersects $J$, recall~\eqref{eq:saccomerda}.
We also observe that
\[
	\lvert H_i'\rvert
	\sim \lvert I_i'\rvert \cdot
		\lvert (f_{[\zeta^i_0\ldots\,\zeta^i_{m_i-1}]})'(x_i)\rvert
\]
up to a multiplicative factor due to distortion, indeed it follows from~
\eqref{defIiprime} and
\eqref{eq:minmax}  that
\begin{equation}\label{eq:something}
	\delta_0\widehat K_0^{-2}e^{-2m(\varepsilon_E+\varepsilon_D)}
	\le \lvert H_i'\rvert
	\le \delta_0
	<K_1,
\end{equation}
where we also used~\eqref{defcond:K1K4}.
Thus, we can apply Axiom CEC$+(J)$ to each interval $H_i'$.

Observe that by Lemma~\ref{lem:warsaw} there is  a subinterval $\widehat H_i'\subset H_i'$ having the covering and expansion properties for an iteration length which is in fact bounded from above and below. Therefore, without loss of generality and for notational simplicity, we can assume  $\widehat H_i'= H_i'$ and that there are sequences $(\eta^i_0\ldots\eta^i_{\ell_i-1})$
such that
\begin{equation}\label{eq:coveringcov}
	f_{[\eta^i_0 \ldots\eta^i_{\ell_i-1}]}(H_i')
	\supset B(J,K_4)
	\supset \bigcup_{i=1}^{M=\card\fX}I_i',
\end{equation}
where the last inclusion follows from~\eqref{defcond:K1K4}, with integers $\ell_i$ satisfying
\begin{equation}\label{eq:lorenzito}
	K_2\,\lvert\log\lvert H_i'\rvert\rvert +K_3
	\le \ell_i
	\le 2(K_2\,\lvert\log\lvert H_i'\rvert\rvert +K_3 ).
\end{equation}

Finally, based on the sequences of the skeleton, see \eqref{eq:itinin}, 
we define the rectangles by
\[
	I_i
	\eqdef f_{[\theta^i_1\ldots\,\theta^i_{r_i}.]}(I_i')
	\quad\text{ and }\quad
	S_i
	= [\xi^i_0\ldots\xi^i_{m-1}]\times I_i.
\]
By our choices, for every pair $i,j\in\{1,\ldots,M\}$, with
\[
	 (\xi^i_0\ldots\xi^i_{m-1}\beta^i_1\ldots\beta^i_{s_i}\eta^i_0\ldots\eta^i_{\ell_i-1}\theta^j_1\ldots\theta^j_{r_j}),\quad
	t_{ij}\eqdef m+s_i+\ell_i+r_j
\]
by the covering property~\eqref{eq:coveringcov}, we have
\[
	F^{t_{ij}}(S_i)
	\supset \Sigma_k^+\times
		f_{[\xi^i_0\ldots\,\xi^i_{m-1}\beta^i_1\ldots\,\beta^i_{s_i}
			\eta^i_0\ldots\,\eta^i_{\ell_i-1}\theta^j_1\ldots\,\theta^j_{r_j}]}(I_i)
	\supset \Sigma_k^+\times I_j
	\supset S_j.
\]
Thus, we obtain the desired covering property in the hypotheses of the 
definition of a multi-variable-time horseshoe.

\subsection{Controlling Lyapunov exponents}\label{ssec:exponenting}

For the sequel, we need some preliminary estimates. We observe that from~\eqref{eq:lorenzito} and~\eqref{eq:something} we have
\begin{equation}\label{eq:ellell2}
	K_2\lvert\log\delta_0\rvert+K_3
	\le \ell_i	
	\le 2\Big(K_2\big(\lvert\log(\delta_0\widehat K_0^{-2})\rvert+2m(\varepsilon_E+\varepsilon_D)\big)+K_3\Big).
\end{equation}
Hence,
\[\begin{split}
	\ell_i	
	&\le 2\Big(K_2\big(\lvert\log(\delta_0\widehat K_0^{-2})\rvert+2m(\varepsilon_E+\varepsilon_D)\big)+K_3\Big)\\
\text{with~\eqref{defcond:vareps1}}\quad
	&=  2\Big(K_2\big(2\log\widehat K_0+m(\varepsilon_1+2\varepsilon_E+2\varepsilon_D)\big)+K_3\Big)
\end{split}\]
and
\[
	 K_2 m\varepsilon_1 +K_3  \le \ell_i .
\]
From this, using the Landau symbol and recalling again that 
 $m_i=m+s_i+r_i$ where  $r_i\le m_{\rm f},s_i\le m_{\rm b}$,
 we can conclude
\begin{equation}\label{eq:formulaa}
	\frac{m}{m_i+\ell_i}
	= 1 + O\Big(\varepsilon_E+\varepsilon_D+\varepsilon_1+\frac1m\Big).
\end{equation}
Moreover, by Axiom CEC$+(J)$ and~\eqref{def:cunifconst} for every $x\in H_i'$ we have
\begin{equation}\label{eq:conexpspelow}
	\ell_i K_5
	\le \log\,\lvert (f_{[\eta_0^i\ldots\,\eta_{\ell_i-1}^i]})'(x)\rvert
	\le \ell_i\log\, \D\,.
\end{equation}

To estimate the exponents in the horseshoe, we first look at the ``finite-time'' Lyapunov exponents. We will use the concatenate sequences
\[\begin{split}
	(\sigma_0^i\ldots\sigma_{m_i+\ell_i-1}^i)
	&\eqdef (\zeta^i_0\ldots\,\zeta^i_{m_i-1}\,\eta_0^i\ldots\eta_{\ell_i-1}^i)\\
	&= (\theta^i_1\ldots\theta^i_{r_i}\,\xi_0^i\ldots\xi_{m-1}^i\,
			\beta^i_1\ldots\beta^i_{s_i}
			\,\eta_0^i\ldots\eta_{\ell_i-1}^i),
\end{split}\]
for $i=1,\ldots,M$.
Observe that for $x\in I_i'$ we have
\[\begin{split}
	\log\,\lvert  (f_{[\sigma^i_0\ldots\,\sigma^i_{m_i+\ell_i-1}]})'(x)\rvert
	&= \log\,\lvert (f_{[\zeta^i_0\ldots\,\zeta^i_{m_i-1}]})'(x)\rvert
		+ \log\,\lvert (f_{[\eta^i_0\ldots\,\eta^i_{\ell_i-1}]})'
			(f_{[\zeta^i_0\ldots\,\zeta^i_{m_i-1}]}(x))\rvert\\
\text{by~\eqref{eq:minmax} and~\eqref{eq:conexpspelow}}\quad
	&\ge -\log \widehat K_0 +m(\alpha-\varepsilon_E-\varepsilon_D)+ \ell_i	K_5\\
\text{by~\eqref{eq:ellell2}}\quad
	&\ge -\log \widehat K_0 +m(\alpha-\varepsilon_E-\varepsilon_D)
		+ \Big(K_2\lvert\log\delta_0\rvert +K_3\Big)K_5\\
	\text{reordering}\quad&= m\alpha
		+ K_2K_5\lvert\log\delta_0\rvert
		- m(\varepsilon_E+\varepsilon_D)
		-\log \widehat K_0
		+ K_3K_5	\\
\text{by~\eqref{defcond:Kfinal}}\quad
	&\ge m\alpha+\vartheta	.	
\end{split}\]

By the above and~\eqref{eq:formulaa} we get%
\footnote{Recall that the hypotheses of the theorem permit that $\alpha=0$.}
\begin{eqnarray}
	\frac{1}{m_i+\ell_i}\log\,\lvert (f_{[\sigma^i_0\ldots\,\sigma^i_{m_i+\ell_i-1}]})'(x)\rvert
	&\ge& \frac{m}{m_i+\ell_i}\alpha+\frac{1}{m_i+\ell_i}\vartheta
	\notag\\
	&>&\max\Big\{0, \alpha -O(\varepsilon_E+\varepsilon_D+\varepsilon_1+\frac1m)\Big\}.\label{eq:fintimlya-up}
\end{eqnarray}
This provides the lower bound of the finite-time Lyapunov exponent.

On the other hand, acting as above, we have
\[\begin{split}
	\log\,\lvert & (f_{[\sigma^i_0\ldots\,\sigma^i_{m_i+\ell_i-1}]})'(x)\rvert\\
	&= \log\,\lvert (f_{[\zeta^i_0\ldots\,\zeta^i_{m_i-1}]})'(x)\rvert
		+ \log\,\lvert (f_{[\eta^i_0\ldots\,\eta^i_{\ell_i-1}]})'(f_{[\zeta^i_0\ldots\,\zeta^i_{m_i-1}]}(x))\rvert\\
\text{by~\eqref{eq:minmax} and~\eqref{eq:conexpspelow}}	&\le \log \widehat K_0 +m(\alpha+\varepsilon_E+\varepsilon_D)+ \ell_i \log \,\D\\
\text{by~\eqref{eq:ellell2}}
	&\le \log \widehat K_0 +m(\alpha+\varepsilon_E+\varepsilon_D)+
	\\
	&\phantom{\le}	2\Big(K_2\big(\lvert\log(\delta_0\widehat K_0^{-2})\rvert+2m(\varepsilon_E+\varepsilon_D)\big)+K_3\Big)\log \,\D\\
	&\le m\alpha 	+ m\,O(\varepsilon_E+\varepsilon_D+\varepsilon_1+\frac1m).
\end{split}\]
Observing that $m\le m_i+\ell_i$, with~\eqref{eq:formulaa} we obtain
\begin{equation}\label{eq:fintimlya-low}
	\frac{1}{m_i+\ell_i}\log\,\lvert (f_{[\sigma^i_0\ldots\,\sigma^i_{m_i+\ell_i-1}]})'(x)\rvert
	\le  \alpha +O(\varepsilon_E+\varepsilon_D+\varepsilon_1+\frac1m).
\end{equation}
This provides the upper bound of the finite-time Lyapunov exponent.

With the right choices of $\varepsilon_E,\varepsilon_D,m$ and $\delta_0$ in Section~\ref{sec:quantify}, the bounds in~\eqref{eq:fintimlya-up} and~\eqref{eq:fintimlya-low} will each be positive and $\lambda$-close to $\alpha$.

It remains to observe that we have the estimates~\eqref{eq:fintimlya-low},~\eqref{eq:fintimlya-up}  at each point in $I_i'$.  Hence, it is immediate that at every point in the horseshoe $\Gamma$ (and hence in its symbolic extension $\widehat\Gamma$) the Lyapunov exponent is between $\alpha-\lambda$ and $\alpha+\lambda$ for $\lambda=O(\varepsilon_E+\varepsilon_D+\varepsilon_1+1/m)$.

\subsection{Controlling entropy}\label{ssec:entropying}

Recall the construction of a multi-variable-time horseshoe in Section~\ref{sec:sketch}. We have obtained a multi-variable-time horseshoe $\Gamma$ and its symbolic extension $\widehat\Gamma\subset\Sigma_k\times\bS^1$, which is a compact $F$-invariant  set.

Observe that the transition times in the horseshoe $t_{ij}=m+s_i+\ell_i+r_j$ vary between numbers $t_{\rm min}\ge m+1$ and $t_{\rm max}\le m+S(m)$, where
\[
	S(m)
	\eqdef m_{\rm b} +m_{\rm f}+\max_i\ell_i.
\]

As $(F,\widehat\Gamma)$ symbolically extends $(T,\Gamma)$, we have
\[
	h_{\rm top}(F,{\widehat\Gamma})= h_{\rm top}(T,\Gamma).
\]
By Proposition~\ref{prop:wilvarhorse}, we have
\[
	h_{\rm top}(T,\Gamma)
	\ge \frac{\log M-\log(t_{\rm max}-t_{\rm min}+1)}{t_{\rm max}}
	\ge \frac{\log M-\log S(m)}{m+S(m)}
\]
and 
\[
	h_{\rm top}(T,\Gamma)
	\le \frac{1}{t_{\rm min}}\log M
	\le \frac{1}{m+1}\log M.
\]
It follows from~\eqref{eq:ellell2} that 
\[
	S(m)
	={m}\, O(\varepsilon_E+\varepsilon_D+\varepsilon_1+\frac1m).
\]
Hence,  from~\eqref{eq:saccoentropy} we get
\[
	\frac{\log{M}-\log{S(m)}}{m+S(m)}
	\ge h-\varepsilon_H - O(\varepsilon_E+\varepsilon_D+\varepsilon_1+\frac{\log m}{m}).
\]
and 
\[
	\frac{1}{m+1}\log M
	\le 
	h+\varepsilon_H+O(\frac1m).
\]

With the right choices of $\varepsilon_E,\varepsilon_D,m$ in Section~\ref{sec:quantify},  this gives the estimates for the topological entropy of $F|_{\widehat\Gamma}$.

\subsection{Proof of  Theorem~\ref{teo:PES1}}
It is now a consequence of Sections~\ref{sec:sketch}--\ref{ssec:entropying}.

\subsection{Controlling Birkhoff  averages -- Proof of  Theorem~\ref{teo:PES2}}\label{ssec:birkhoff}

We continue to consider the ingredients of Sections~\ref{sec:sketch}--\ref{ssec:entropying}, where now we consider some measure $\mu\in\cM_{\rm erg}$ and assume that the  Skeleton$\ast$ property holds relative to $J$ and $\mu$.  We  take $h=h(\mu)$ and $\alpha=\chi(\mu)$.

 Pick a countable 
 dense subset  $\{\psi_i\}_{i\ge1}$ of continuous (nonzero) functions in the space of all continuous functions on $\Sigma_k\times\bS^1$. Recall that in the space of invariant probabilities on $\Sigma_k\times\bS^1$ the following  function  $d\colon \cM\times\cM\to[0,1]$
	\begin{equation}\label{distance}
		d(\mu,\nu)\eqdef \sum_{i=1}^\infty
		2^{-i}\frac{1}{2\lVert\psi_i\rVert_\infty}
		\,\Big\lvert \int\psi_i\,d\mu-\int\psi_i\,d\nu\Big\rvert\,,
	\quad
	\lVert \psi\rVert_\infty\eqdef\sup\,\lvert\psi\rvert	
	\end{equation}	
provides a metric which induces the weak$\ast$ topology on $\cM$.

As in Section~\ref{sec:quantify}, we fix $\gamma\in(0,h)$ and $\lambda>0$ and we specify the other quantifiers as before. Especially important is $\varepsilon_E\ll\gamma$.

Fixing some preliminary constants, let $K$ be a positive integer satisfying
\begin{equation}\label{choicer00}
	2^{-K+1}< \frac \gamma 2
\end{equation}	
and choose $\gamma_0\in(0,\gamma)$ such that
\begin{equation}\label{choicer0}
	\gamma_0(1-2^{-K})\, \max_{i=1,\ldots,K}\lVert\psi_i\rVert_\infty^{-1}
	< \gamma.
\end{equation}
Moreover, assume that $K$ was chosen large enough such that $\{\psi_1,\ldots,\psi_K\}$ is $\gamma$-dense, that is, for every continuous $\varphi\colon \Sigma_k\times\bS^1\to\bR$ there exists $\psi_i$, $i\in\{1,\ldots,K\}$, such that
\[
	\lVert \psi_i-\varphi\rVert_\infty
	<\frac\gamma2.
\]
Choose also a number $\varepsilon_0>0$ sufficiently small such that  the modulus of continuity of each $\phi\in\{\psi_1,\ldots,\psi_K\}$ satisfies
\begin{equation}\label{e:modcon}
	\sup_{X\in\Sigma_k\times\bS^1}\sup_{Y\in B(X,\varepsilon_0)}
	\lvert \phi(Y)-\phi(X)\rvert <\gamma_0.
\end{equation}	

Now apply the Skeleton$\ast$ property to $J$ and $\mu$ and the finite family of functions $\psi_j$ and obtain a skeleton $\fX=\{X_i\}$ of points $X_i=(\xi^i,x_i)$. From item (vi) we obtain that for each $\phi\in\{\psi_1,\ldots,\psi_K\}$ it holds
\[
	-\frac{K_0}{m} - \varepsilon_E
	\le \frac1m \sum_{k=0}^{m-1}\phi(F^k(X_i)) - \int\phi\,d\mu
	\le \frac{K_0}{m} + \varepsilon_E.
\]

One can show that $\widehat\Gamma$ can be constructed in such a way that for every $j$ the Birkhoff averages of $\psi_j$ at every point in $\widehat\Gamma$ are between $\int\psi_j\,d\mu-\lambda$ and $\int\psi_j\,d\mu+\lambda$. The only difference is that we replace $\varepsilon_D$ with the modulus of continuity of $\psi_j$ taken at $\delta_0$ (and taken the maximum over all $\psi_j$).

Together with~\eqref{e:modcon}, the choice of $\varepsilon_E$, and Sections~\ref{ssec:selecting}--\ref{ssec:entropying} we obtain that for every $X\in\widehat\Gamma$ and each $\phi\in\{\psi_1,\ldots,\psi_K\}$
\[
	\int\phi\,d\mu - \frac\gamma2\le
	\liminf_{n\to\infty}\frac{1}{n}\sum_{k=0}^{n-1}\phi(F^k(X)) \le
	\limsup_{n\to\infty}\frac{1}{n}\sum_{k=0}^{n-1}\phi(F^k(X))
	\le\int\phi\,d\mu + \frac\gamma2\,.
\]
In particular, for every $F$-invariant probability measure $\nu$  supported on $\widehat\Gamma$
\[
	\Big\lvert\int\phi\,d\nu-\int\phi\,d\mu\Big\rvert < \gamma.
\]
Now recall that we concluded the above for all $\phi\in\{\psi_1,\ldots,\psi_K\}$. Hence, 
with $\gamma_0\in (0,\gamma)$, \eqref{distance}, \eqref{choicer00}, and \eqref{choicer0} we obtain
\[\begin{split}
	d(\nu,\mu)
	&\le
	\sum_{i=1}^K2^{-i}\frac{\gamma_0}{2\lVert\psi_i\rVert_\infty}
	+\sum_{i=K+1}^\infty
		2^{-i}\frac{1}{2\lVert\psi_i\rVert_\infty}
		\,\Big\lvert \int\psi_i\,d\mu-\int\psi_i\,d\nu\Big\rvert\\
	&\le (1-2^{-K})\frac{\gamma_0}{2}\max_{i=1,\ldots,K}\lVert\psi_i\rVert_\infty^{-1}
		+ 2^{-K}2
	< \gamma.
\end{split}\]
This proves the theorem.

\subsection{Perturbation from negative to positive exponents -- Proof of Theorem~\ref{teo:mimick}}

In this section, we give prove Theorem~\ref{teo:mimick}. The proof is very similar to the proofs of Theorems~\ref{teo:PES1} and~\ref{teo:PES2}, so we will only indicate the main points.
We also consider only the case $\alpha<0,\beta>0$, the other case is obtained reversing time.

In the case that $h(\mu)=0$ the following construction is almost identical with the only difference that we take, instead of an exponentially growing number of points and orbit pieces in the skeleton, a single point which will give rise to a periodic orbit (with topological entropy $0$). Hence, in the following, without loss of generality, we assume $h=h(\mu)>0$.

\subsubsection{Choosing quantifiers}
Given $h>0$, $\alpha<0$, $\beta>0$, fix $\gamma\in (0,h)$ and $\lambda>0$.

We let $\varepsilon_E, \varepsilon_H,\varepsilon_D$, each much smaller than $\lvert\alpha\rvert, \beta, \gamma, \lambda$, in particular we require $\alpha+\varepsilon_E<0$. Let $K_1,\ldots,K_4$ be the constants from Axiom CEC$+(J)$ and  $K_3',K_D$ the constants provided by Lemma~\ref{lem:disaxicec}. 

By the Skeleton$\ast$ property relative to $J$ and $\mu$, there are universal numbers $m_{\rm f},m_{\rm b}\ge1$ and constants $K_0,L_0\ge1$ and $n_0\ge1$ such that for every $m\ge n_0$ there is a skeleton $\fX=\{X_i\}$ with $X_i=(\xi^i,x_i)$ having the  properties that
\begin{equation}\label{eq:estentropii}
	M\eqdef \card\fX \ge L_0^{-1}e^{m(h-\varepsilon_H)},
\end{equation}
the finite sequences
  $(\xi_0^i\ldots\,\xi_{m-1}^i)$  are all different, for every $\ell\in\{1,\ldots,m\}$ it holds 
\begin{equation}\label{eq:useskeliii}
	K_0^{-1} e^{\ell(-\lvert\alpha\rvert-\varepsilon_E)}
	\le \lvert (f_{[\xi^i_0\ldots\,\xi^i_{\ell-1}]})'(x_i)\rvert
	\le K_0 e^{\ell(-\lvert\alpha\rvert+\varepsilon_E)},
\end{equation}
and there are finite sequences $(\theta_1^i\ldots\,\theta_{r_i}^i)$, $(\beta_1^i \ldots \,\beta_{s_i}^i)$, $r_i\le m_{\rm f},s_i\le m_{\rm b}$, so that
\[
	 x_i' \eqdef f_{[\theta_{r_i}^i\ldots\,\theta_1^i .]}(x_i)\in J,\quad
	 x_i'' \eqdef f_{[\xi_0^i\ldots\,\xi_{m-1}^i \beta_1^i \ldots \,\beta_{s_i}^i]}(x_i) \in J.
\]

\subsubsection{Choosing the Markov rectangles and transition times}
For $i=1,\ldots,M$ we now consider the intervals
\begin{equation} \label{eqn:hiprim}
	I_i'
	\eqdef B(x_i', e^{-m\beta})
	\quad\text{ and }\quad
	H_i'
	\eqdef
	f_{[\theta_1^i\ldots\,\theta_{r_i}^i
		\xi_0^i\ldots\,\xi_{m-1}^i
		\beta_1^i \ldots\, \beta_{s_i}^i]}(I_i').
\end{equation}
Let
\begin{equation}
\label{e.sigmai}
	(\sigma_0^i\ldots\sigma_{m_i-1}^i)
	\eqdef
	(\theta_1^i\ldots\theta_{r_i}^i\xi_0^i\ldots\xi_{m-1}^i \beta_1^i \ldots \beta_{s_i}^i),
	\quad m_i\eqdef r_i+m+s_i.
\end{equation}	

In the sequel we will use Corollary~\ref{cor:distortionC1} to control distortion. For that we fix $\delta_0>0$ small so that the modulus of continuity satisfies $\Mod(2\delta_0)<\varepsilon_D$.
Fix some small $\varepsilon\in(0,1)$ with $\varepsilon +\varepsilon_D< \beta$. We now also require  that $m$ is sufficiently large so that
\begin{equation}\label{eq:condmmm}
	e^{-m\beta}
	< \delta_0 \widehat K_0^{-1}
		e^{-(m_{\rm b}+m_{\rm f})(\varepsilon+\varepsilon_D)}
		e^{-m(\varepsilon+\varepsilon_D)},
		\quad\text{where}\quad
	\widehat K_0=K_0 \, \lVert F \rVert^{m_{\rm b}+m_{\rm f}}	.
\end{equation}

Applying the controlled covering in Lemma~\ref{lem:disaxicec} to the intervals $H_i'$, we get finite sequences $(\eta_0^i\ldots\,\eta_{\ell_i-1}^i)$ to cover some neighborhood of $J$:
\begin{equation}\label{eq:alltime}
	f_{[\eta_0^i\ldots\,\eta_{\ell_i-1}^i]}(H_i') \supset B(J, K_4)
	\quad\text{ with }\quad
	\ell_i \le K_2\,\lvert\log\,\lvert H_i'\rvert\rvert+K_3'.
\end{equation}
Like in Sections~\ref{sec:quantify}--\ref{ssec:entropying}, this lets us construct a multi-variable-time horseshoe $\Gamma$ and its symbolic extension $\widehat\Gamma$.
What remains is only to estimate the finite-time Lyapunov exponents and the entropy of this horseshoe.

\subsubsection{Controlling Lyapunov exponents}
We first provide an estimate for $m_i$ and $\ell_i$. Observe that $m_i$ is between $m$ and $m+m_{\rm b} + m_{\rm f}$. This, together with~\eqref{eq:useskeliii} and \eqref{eqn:hiprim}, gives us
\begin{equation}\label{eq:perfeito}
	\widehat K_0^{-1} e^{-m(\beta +\lvert\alpha\rvert + \varepsilon_D + \varepsilon_E)}
	\leq |H_i'|
	\leq \widehat K_0 e^{-m(\beta +\lvert\alpha\rvert - \varepsilon_D - \varepsilon_E)}.
\end{equation}
Hence, together with~\eqref{eq:alltime} we have
\begin{equation}\label{eq:telefone}
	\ell_i
	\le K_2 \log \widehat K_0
		+ K_2 m (\beta +\lvert\alpha\rvert + \varepsilon_D + \varepsilon_E)
		+ K_3'.
\end{equation}
On the other hand, since by~\eqref{eq:alltime} we cover the interval $J$, we get
\[
	 \lVert F \rVert ^{\ell_i}\lvert H_i'\rvert
	\ge \lvert f_{[\eta_0^i\ldots\,\eta_{\ell_i-1}^i]}(H_i') \rvert
	\ge \lvert J\rvert
\]
and with~\eqref{eq:perfeito} hence
\[\begin{split}
	\ell_i
	&\ge \frac{1}{\log  \lVert F \rVert }\big(
					-\log\,\lvert H_i'\rvert
					+\log\,\lvert J\rvert
					\big)\\
	&\ge \frac{1}{\log  \lVert F \rVert }\big(
					m (\beta +\lvert\alpha\rvert + \varepsilon_D + \varepsilon_E)
					-\log\widehat K_0
					-\lvert\log\,\lvert J\rvert\rvert
					\big)				.
\end{split}\]
This provides the following estimate
\begin{multline}\label{eq:fiiinal}
	\frac{1}{1+K_2(\beta+\lvert\alpha\rvert)} - O(\varepsilon_D+\varepsilon_E+\frac1m)
	\le \frac{m}{m_i+\ell_i}\\
	\le \frac{1}{1+\frac{1}{\log  \lVert F \rVert }(\beta+\lvert\alpha\rvert)} + O(\varepsilon_D+\varepsilon_E+\frac1m).
\end{multline}

By the controlled distortion by Lemma~\ref{lem:disaxicec} we have
\begin{equation}\label{eq:disto1}
	\dist f_{[\eta_0^i\ldots\,\eta_{\ell_i-1}^i]}|_{H_i'}
	\le \lvert H_i'\rvert^{-\varepsilon_D} K_D.
\end{equation}
What remains is to control the distortion along the whole trajectory. For that note that, recalling~\eqref{eqn:hiprim},
\[
	\widehat K_0^{-1} e^{-m(\beta +\lvert\alpha\rvert + \varepsilon_D + \varepsilon_E)}
		e^{m\beta}
	\le \frac{\lvert H_i'\rvert}{\lvert I_i'\rvert}
	\le \widehat K_0 e^{-m(\beta +\lvert\alpha\rvert - \varepsilon_D - \varepsilon_E)}
		e^{m\beta}.
\]

What remains is to control the distortion along the whole trajectory. For that we now use Corollary~\ref{cor:distortionC1}. First note that for every $\ell \in\{1,\ldots,m_i\}$ by~\eqref{eq:useskeliii} 
and the definition of $(\sigma_0^i\ldots\sigma_{m_i-1}^i)$ in \eqref{e.sigmai} 
and recalling the definition of $\widehat K_0$ in \eqref{eq:condmmm} 
we have
\[
	\lvert (f_{[\sigma_0^i\ldots\sigma_{\ell-1}^i]})'(x_i)\rvert
	\le \widehat K_0 e^{\ell (-\lvert\alpha\rvert+\varepsilon_E)}.
\]
We now can apply the corollary to the interval  
\begin{equation}\label{e.intervalIiprime}
	I_i'
	=B(x_i',e^{-m\beta})
	\subset Z
	\eqdef B(x_i',\delta_0\widehat K_0^{-1}e^{-m_i(\varepsilon+\varepsilon_D)}),
\end{equation}	
(recall~\eqref{eq:condmmm} to verify the inclusion)
obtaining
\begin{equation}\label{eq:disto2}
	\lvert \log\dist f_{{[\sigma_0^i\ldots\sigma_{\ell -1 }^i]}}|_{I_i'}\rvert \le \ell \varepsilon_D.
\end{equation}

We are now ready to estimate the finite-time Lyapunov exponents. Indeed, by the mean value theorem, there exists $y\in I_i'$ satisfying
\[
	\frac{\lvert J\rvert}{\lvert I_i'\rvert}
	\le \lvert (f_{[\sigma_0^i\ldots\,\sigma_{m_i-1}^i \eta_0^i \ldots\,\eta_{\ell_i-1}^i]})'(y)\rvert
	\le \frac{1}{\lvert I_i'\rvert}.
\]
Therefore, by distortion~\eqref{eq:disto1} and~\eqref{eq:disto2}, for every $x\in I_i'$ we get
\[
	\frac {|J|} {|I_i'|}
	\cdot e^{-m_i\varepsilon_D}
	 |H_i'|^{\varepsilon_D} K_D^{-1}
	 \le  |(f_{[\sigma_0^i\ldots\,\sigma_{m_i-1}^i \eta_0^i \ldots\,\eta_{\ell_i-1}^i]})'(x)|
	 \le \frac {1} {|I_i'|}
	 	\cdot e^{m_i\varepsilon_D}
		 |H_i'|^{-\varepsilon_D} K_D.
\]
Substituting \eqref{eqn:hiprim} and \eqref{e.intervalIiprime} and using~\eqref{eq:perfeito} 
and recalling $m_i\le m_{\rm f}+m+ m_{\rm b}$
we get
\[\begin{split}
	 |(f_{[\sigma_0^i\ldots\,\sigma_{m_i-1}^i \eta_0^i \ldots\,\eta_{\ell_i-1}^i]})'(x)|
	 &\le e^{m\beta}e^{m_i\varepsilon_D}
	 	\big(\widehat K_0e^{-m(\beta+\lvert\alpha\rvert-\varepsilon_D-\varepsilon_E)}\big)
			^{-\varepsilon_D}
	 	K_D \\
	&\le  K_D\widehat K_0^{-\varepsilon_D}e^{(m_{\rm f}+m_{\rm b})\varepsilon_D}
		e^{m\big(\beta +(\beta+\lvert\alpha\rvert + 1-\varepsilon_D-\varepsilon_E)\varepsilon_D\big)}		.
\end{split}\]
Analogously,
\[
	|(f_{[\sigma_0^i\ldots\,\sigma_{m_i-1}^i \eta_0^i \ldots\,\eta_{\ell_i-1}^i]})'(x)|
	 \ge
	 \lvert J\rvert
	 K_D^{-1}
	 \widehat K_0^{\varepsilon_D}e^{-(m_{\rm f}+m_{\rm b})\varepsilon_D}
		e^{m\big(\beta +(\beta+\lvert\alpha\rvert - 1+\varepsilon_D+\varepsilon_E)\varepsilon_D\big)}.	
\]
Summarizing, there is some constant $K>1$ so that
\begin{multline*}
	K^{-1}e^{m\big(\beta +(\beta+\lvert\alpha\rvert - 1+\varepsilon_D+\varepsilon_E)\varepsilon_D\big)}
	\le\\ |(f_{[\sigma_0^i\ldots\,\sigma_{m_i-1}^i \eta_0^i \ldots\,\eta_{\ell_i-1}^i]})'(x)|
	\le Ke^{m\big(\beta +(\beta+ \lvert\alpha\rvert+ 1-\varepsilon_D-\varepsilon_E)\varepsilon_D\big)}.
\end{multline*}
Substituting the bounds on $m/(m_i+\ell_i)$ in~\eqref{eq:fiiinal} we can bound  the finite-time Lyapunov exponents:
\begin{multline*}
	\frac{\beta}{1+K_2(\beta +\lvert\alpha\rvert)}
		- O(\varepsilon_D+ \varepsilon_E+ \frac1m) \\
	\leq \frac{1}{m_i+\ell_i}
	\log |(f_{[\sigma_0^i\ldots\,\sigma_{m_i-1}^i \eta_0^i \ldots\,\eta_{\ell_i-1}^i]})'(x)|
	\leq \frac{\beta}{1+\frac{1}{\log \,\lVert F\rVert}(\beta +\lvert\alpha\rvert)}
		+ O(\varepsilon_D+ \varepsilon_E+\frac1m).
\end{multline*}
This immediately implies the estimates of the  exponents in the horseshoe $\widehat\Gamma$.

\subsubsection{Controlling entropy}
As in the proof of Theorem~\ref{teo:PES1}, the topological entropy of $F$ on the horseshoe is estimated as follows.
Observe that the transition times in the horseshoe $t_{ij}=m+s_i+\ell_i+r_j$ vary between numbers $t_{\rm min}\ge m+1$ and $t_{\rm max}\le m+S(m)$, where
\[
	S(m)
	\eqdef m_{\rm b} +m_{\rm f}+\max_i\ell_i.
\]

As before, using Proposition~\ref{prop:wilvarhorse}, we have
\[
	h_{\rm top}(T,\Gamma)
	\ge \frac{\log M-\log S(m)}{m+S(m)}.
\]
It follows from~\eqref{eq:telefone} that 
\[
	S(m)
	\le mK_2(\beta+\lvert\alpha\rvert)
		+m\,  O (\varepsilon_E+\varepsilon_D+\varepsilon_1+\frac1m).
\]
Hence,  from~\eqref{eq:estentropii} we get
\[
	\frac{\log{M}-\log{S(m)}}{m+S(m)}
	\ge \frac{h-\varepsilon_H}{1+K_2(\beta+\lvert\alpha\rvert)}
		 - O(\varepsilon_E+\varepsilon_D+\varepsilon_1+\frac{\log m}{m}).
\]
This finishes the estimate of the entropy.

We finally explain the weak$\ast$ approximation of measures in $\widehat\Gamma$. We assume that the Skeleton $\fX=\{X_i\}$ was chosen such that, analogously to Section~\ref{ssec:birkhoff} in the proof of Theorem~\ref{teo:PES2}, the orbit of length $m$ starting in the points $X_i=(\xi^i,x_i)$ well approximate $\mu$. Recall again that the connecting times $r_i,s_i$ are bounded by some universal constants. Recall also that $\ell_i$ is of order $mK_2(\beta+\lvert\alpha\rvert)$. By construction, the set $\widehat\Gamma$ is built very close to these orbit pieces and hence any invariant measure supported on it has generic points always staying a fraction $K_2(\beta+\lvert\alpha\rvert)/(1-K_2(\beta+\lvert\alpha\rvert))$ of times close to them. This sketches the approximation of the measure and finishes the proof.

\section{Proofs of the main results}
\label{sec:mainproofs}

In this section we provide the still missing proofs of our main results.

Recall that, assuming CEC$\pm$ and Acc$\pm$, there is some closed interval satisfying the Axioms CEC$\pm(J)$ and Acc$\pm(J)$. Recall that for any $\mu\in\cM_{\rm erg}$, by Proposition~\ref{prop:lunch1} 
the map $F$ has the Skeleton$\ast$ property relative to $J$ and $\mu$. With this in mind, we now prove the theorems.

\subsection{Proof of Theorem~\ref{the:luzzaattoo}}
Consider
$\mu\in\cM_{\rm erg}$ with $\chi(\mu)=0$ and $h(\mu)>0$.
By the comments above we can apply
Theorem~\ref{teo:PES2} (to $J$ and $\mu$) to obtain measures with positive Lyapunov exponent which weak$\ast$ and in entropy approximate $\mu$. For the negative exponent measure, it is enough to apply Theorem~\ref{teo:PES2backward}.
\qed

\subsection{Proof of Theorem~\ref{theo:main3twin}}
Recall that by Lemma~\ref{lem:commonintJ} the axioms hold for any (sufficiently small) closed interval. Now it is enough to apply Theorem~\ref{teo:mimick} and recall the definition of $\Kdois(F)$ in~\eqref{eq:Kdois}.
\qed

\subsection{Twin measures -- Proof of 
Fact~\ref{pro:facttwinn}}\label{sec:twins}
First note that the skew-product map can be considered as the symbolic model of a $C^1$ diffeomorphism which has a dominated splitting in its tangent bundle into three bundles such that the central one corresponds to the fiber direction. In this setting, the Katok approximation by hyperbolic horseshoes applies
to  any given a hyperbolic measure,
 (see for instance~\cite{Cro:11}).
This implies that
the measure $\mu$ is a weak$\ast$ limit of invariant measures distributed on hyperbolic periodic orbits with fiber Lyapunov exponent close to $\chi(\mu)$. 

Given each such periodic point $X=(\xi,x)$ of period $p$, the iterated fiber map has  derivative $\lvert(f^p_\xi)'(x)\rvert<1$. Since we consider circle maps,  to each such point there exists a $p$-periodic point $Y=(\xi,y)$ satisfying $\lvert (f^p_\xi)'(y)\rvert\ge1$. Now the sequence of measures distributed on the corresponding  periodic orbits has a subsequence which converges weak$\ast$ to an invariant measure $\widetilde\mu$ satisfying $h(\widetilde\mu)=h(\mu)$. Indeed, the entropy of $\mu$ is determined by the entropy of the projected measure (compare~\eqref{eq:entropyproj}) and hence by the number of periodic points, only.
The so obtained measure $\widetilde\mu$ has nonnegative fiber exponent. The so obtained measure $\widetilde\mu$ is not necessarily ergodic. However, if $\mu$ is ergodic then its projection $\varpi_\ast\mu$ is ergodic, too. In this case, since the measure $\widetilde\mu$ has the same projection as $\mu$, any of its ergodic components has entropy equal to $h(\varpi_\ast\mu)=h(\mu)$. Finally, there is some ergodic component of $\widetilde\mu$ with nonnegative exponent.
\qed

\subsection{Proof of Theorem~\ref{theocor:varprinc}}
Denote $h\eqdef h_{\rm top}(F)$. By the usual variational principle for entropy, we have
\[
	h
	=\sup_{\mu\in \cM_{\rm erg}} h(\mu)
	=\max\{
		\sup_{\mu\in \cM_{\rm erg, <0}} h(\mu),
		\sup_{\mu\in \cM_{\rm erg, 0}} h(\mu),
		\sup_{\mu\in \cM_{\rm erg, >0}} h(\mu)\}>0.
\]	

Let us distinguish the three cases:
\begin{itemize}
\item [1)] $h=\sup_{\mu\in \cM_{\rm erg, 0}} h(\mu)$,
\item [2)] $h=\sup_{\mu\in \cM_{\rm erg, <0}} h(\mu)$,
\item [3)] $h=\sup_{\mu\in \cM_{\rm erg, >0}} h(\mu)$.
\end{itemize}

In Case 1 the assertion follows from Theorem~\ref{the:luzzaattoo}.

Cases 2 and 3 are analogous, we give the proof  of  Case 2.  By Fact~\ref{pro:facttwinn} for every ergodic measure $\mu\in\cM_{\rm erg,<0}$ with large entropy there exists an ergodic measure with equal entropy and exponent which is either zero or positive. In the former case we are in Case 1. In the latter one we are done.
\qed

\section{Examples}\label{sec:examples}

In this section we introduce a simple class of step skew-product maps as in~\eqref{eq:sp} whose fiber maps 
satisfy the
axioms in Section~\ref{sec:axioms}.
In this section we consider two types of examples (satisfying some open conditions):
blender-like examples
(Section~\ref{ss.viablenders}) and contraction-expansion-rotation examples first studied in~\cite{GorIlyKleNal:05} (Section~\ref{sec:Gorod}). Let us observe that although the nature of these two constructions is quite different
(although they share some common ingredients)  these properties
are essentially the same $C^1$-open and densely.
Let us also observe that the examples that we consider 
are robustly transitive step skew-product maps.
In this section we do not aim for full generality, but our goal is rather to present
simple constructions.

\subsection{Examples via blenders}
\label{ss.viablenders}

To construct examples of skew-product maps satisfying the axioms in our setting we begin by defining a blender of an iterated function system (this definition can be seen as a translation of the definition of a blender to the
one-dimensional context). In the next definition we also borrow and adapt the terminology 
commonly 
used for blenders (see, for example,~\cite[Chapter 6.2]{BonDiaVia:05}).

\begin{definition}[One-dimensional blenders]
\label{d.blenders}
Consider diffeomorphisms $f_0$, $\dots$, $f_{k-1}\colon \mathbb{S}^1\to \mathbb{S}^1$.
We say that the IFS $\{f_i\}$ has an \emph{expanding blender}
if there are finite sequences $(\xi_0\dots \xi_r)$ and $(\eta_0\dots \eta_\ell)$,
$\xi_i, \eta_j\in \{0,\dots, k-1\}$, such that the maps
$g_0=f_{[\xi_0\dots \,\xi_r]}$ and $g_1=f_{[\eta_0\dots\, \eta_\ell]}$ satisfy the following properties:
there are a number $\beta>1$, an interval $[a,b]\subset\mathbb{S}^1$, and points $c,d\in [a,b]$, $c<d$, such that:
\begin{enumerate}
\item (uniform expansion)
$g_0^\prime (x)\ge \beta$ for all $x\in [a,d]$ and
$g_1^\prime (x)\ge \beta$ for all $x\in [c,b]$,
\item (boundary condition)
$g_0(a)=g_1(c)=a$,
\item
(covering and invariance) $g_0([a,d])=[a,b]$ and $g_1([c,b])\subset [a,b]$
\end{enumerate}
(see Figure~\ref{fi.neublender}).
We say that $[a,b]$ is the \emph{domain of definition} of the blender and that $[c,d]$ is
 the \emph{superposition interval} of the blender.

The IFS $\{f_i\}$ is said to have a \emph{contracting blender} provided the IFS $\{f_i^{-1}\}$ has an expanding blender.
\end{definition}
\begin{figure}
\begin{minipage}[h]{\linewidth}
\centering
 \begin{overpic}[scale=.35]{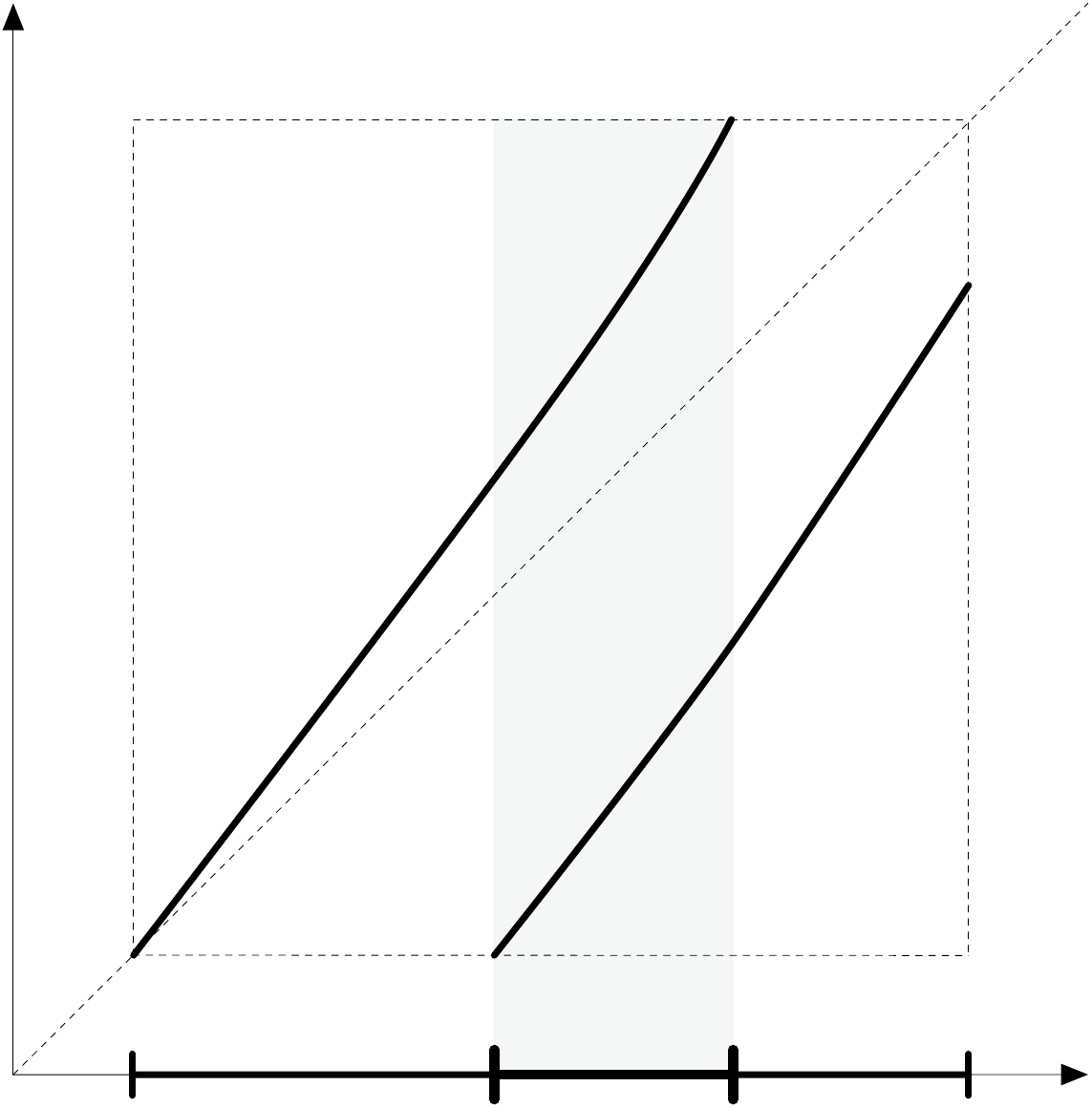}
			\put(40,75){\small{$g_0$}}
			\put(50,40){\small{$g_1$}}
  			\put(10,-7){\small$a$}
  			\put(86,-7){\small$b$}
  			\put(42,-7){\small$c$}
  			\put(64,-7){\small$d$}
  \end{overpic}
\caption{expanding blender}
\label{fi.neublender}
\end{minipage}
\end{figure}	

\begin{remark}
\label{r.continuation}
Following \cite{BonDia:96}, it is straightforward to see that the property of having a blender is an open property for the step skew-product.
Given an IFS $\{f_i\}$ of diffeomorphisms $f_0,\dots,f_{k-1}\colon \mathbb{S}^1\to \mathbb{S}^1$ having an expanding blender, then every family of maps $g_0,\dots,g_{k-1}\colon \mathbb{S}^1\to \mathbb{S}^1$ which are
$C^1$-close enough to $f_0,\dots,f_{k-1}$ associates an iterated function system which has an expanding blender
(where the elements in the definition of the blender depend continuously on the IFS).
\end{remark}

\begin{proposition}\label{p.example1}
Consider an IFS $\{f_i\}$ of diffeomorphisms $f_0,\dots,f_{k-1}\colon \mathbb{S}^1\to \mathbb{S}^1$ having
\begin{itemize}
\item[1.]
an expanding  blender with domain of definition $[a^+,b^+]$
and superposition interval $[c^+,d^+]$ and
\item[2.]
a contracting blender with domain of definition $[a^-,b^-]$ and superposition interval $[c^-,d^-]$.
\end{itemize}
Suppose that every point $x\in\bS^1$ has a forward and a backward
iterate in the interior of $[a^+,b^+]$ and $[a^-,b^-]$, respectively.

Then for every closed interval $J^+\subset (c^+,d^+)$, the IFS $\{f_i\}$ satisfies the
Axioms CEC$+(J^+)$ and Acc$\pm(J^+)$. For every closed interval $J^-\subset (c^-,d^-)$ the  IFS $\{f_i\}$ satisfies the Axioms CEC$-(J^-)$ and Acc$\pm(J^-)$.
\end{proposition}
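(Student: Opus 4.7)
The two halves of the proposition are mirror images under time reversal — the second being the first applied to the inverse IFS $\{f_i^{-1}\}$, whose expanding blender sits at $[a^-,b^-]$ — so I shall only sketch the first. The plan is to extract Axiom CEC$+(J^+)$ directly from the expanding blender, and then to combine this with the accessibility hypothesis to derive Axioms Acc$\pm(J^+)$.

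For CEC$+(J^+)$, choose $K_1>0$ smaller than $d^+-c^+$ and than the distance from $J^+$ to $\partial(c^+,d^+)$, and let $H$ meet $J^+$ with $\lvert H\rvert<K_1$. The iterative step is this: the current image $H_n\subset[a^+,b^+]$ meets $[c^+,d^+]$, and since $[a^+,d^+]\cup[c^+,b^+]=[a^+,b^+]$ it lies in one of the two expansion domains, so the corresponding map $g_0$ or $g_1$ applies and multiplies length by at least $\beta>1$; the blender's covering structure, in particular $g_0([a^+,d^+])=[a^+,b^+]\supset[c^+,d^+]$, allows this choice to preserve the property that $H_{n+1}$ still meets $[c^+,d^+]$. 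After $n=O(\log(1/\lvert H\rvert))$ steps the image has length at least $b^+-a^+$ and so coincides with $[a^+,b^+]\supset B(J^+,K_4)$, where $K_4:=\mathrm{dist}(J^+,\partial[a^+,b^+])>0$. Since each $g_i$ is a bounded concatenation of $f_j$'s, the total word length is at most $K_2\lvert\log\lvert H\rvert\rvert+K_3$, and $\beta>1$ translates into a uniform per-letter expansion $K_5$.

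For Acc$+(J^+)$, applying CEC$+(J^+)$ to a small subinterval of $\interior J^+$ produces a forward iterate of $\interior J^+$ containing $B(J^+,K_4)$; iterating this image once more through the blender, the forward orbit of $\interior J^+$ eventually covers $[a^+,b^+]$. Since by hypothesis every $y\in\bS^1$ is a forward iterate of some point of $(a^+,b^+)$ (this is the backward-iterate-in-$(a^+,b^+)$ assumption), concatenation yields $\mathcal{O}^+(\interior J^+)=\bS^1$.

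Finally, Acc$-(J^+)$ is the most delicate step and I expect it to be the main obstacle. By the forward accessibility hypothesis it suffices to show that every $y_1\in(a^+,b^+)$ has some forward iterate in $\interior J^+$. The idea is to exploit the blender's two-to-one structure on $[a^+,b^+]$: since $[a^+,d^+]\cup[c^+,b^+]=[a^+,b^+]$ and the two inverse maps $g_0^{-1},g_1^{-1}$ together cover $[a^+,b^+]$ with nondegenerate overlap $[c^+,d^+]$, the contracting IFS $\{g_0^{-1},g_1^{-1}\}$ has attractor equal to $[a^+,b^+]$, and consequently the backward orbit of $\interior J^+$ under $\{g_0,g_1\}$ is a dense open subset of $(a^+,b^+)$. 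The technical heart of the proof will be to upgrade this density to equality, i.e., to show that the backward orbit actually contains every $y_1\in(a^+,b^+)$; this uses the expanding dynamics more carefully (pushing any $y_1\neq a^+$ through the superposition $[c^+,d^+]$ by iterating $g_0$, and then using the symbolic freedom provided by the overlap to refine the landing position into $\interior J^+$), together with the fact that the full IFS $\{f_i\}$ contains more maps than just $g_0$ and $g_1$ and so enlarges this backward orbit further.
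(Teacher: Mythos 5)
The decisive gap is in Acc$-(J^+)$, which you leave open and for which your proposed route would fail. You reduce, correctly, to showing that every $y\in(a^+,b^+)$ has a forward iterate in $\interior J^+$, but you then try to produce such an iterate from the expanding blender alone, by upgrading the density of the backward orbit of $\interior J^+$ under $\{g_0,g_1\}$ to equality. That upgrade is false in general: outside the superposition interval the itinerary is forced (on $[a^+,c^+)$ only $g_0$ is controlled, on $(d^+,b^+]$ only $g_1$), and the blender axioms are compatible with a forced $2$-cycle that leapfrogs $[c^+,d^+]$ forever --- for instance on $[a^+,b^+]=[0,1]$ with $c^+=0.45$, $d^+=0.55$, $g_0(x)=x/0.55$, $g_1(x)=1.8\,(x-0.45)$, the point $z\approx 0.356\in(a^+,c^+)$ satisfies $g_0(z)\approx 0.648\in(d^+,b^+)$ and $g_1(g_0(z))=z$, so its admissible orbit never meets $[c^+,d^+]$ and the ``symbolic freedom of the overlap'' never becomes available. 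Appealing to the remaining maps of the IFS does not help, since the hypotheses give no control on where their iterates land beyond the intervals $(a^{\pm},b^{\pm})$. An expanding blender steers forward images of \emph{intervals}, not of points; backward accessibility of $J^+$ is exactly what the \emph{contracting} blender is for, and your argument never uses it (nor the $[a^-,b^-]$ part of the hypothesis) in this half. The paper's proof of Acc$-(J^+)$: some point of $\interior J^+$, hence a small subinterval $K\subset \interior J^+$, has a backward iterate inside $(a^-,b^-)$; by the contracting blender (the Scholium) the backward orbit of $K$ covers $[a^-,b^-]$; and since every $x\in\bS^1$ has a forward iterate in $(a^-,b^-)$, every $x$ lies in $\mathcal{O}^-(\interior J^+)$. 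That is the missing idea.

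Your CEC$+(J^+)$ step also overclaims at the end. The growth scheme (apply $g_0$ or $g_1$ to the whole current interval) can be continued only while that interval lies inside one of the two expansion domains; it terminates exactly when the interval lies in neither $[a^+,d^+]$ nor $[c^+,b^+]$, and at that moment all you know is that it contains $[c^+,d^+]$ --- its length never needs to reach $b^+-a^+$, so ``has length at least $b^+-a^+$ and hence coincides with $[a^+,b^+]$'' is unjustified and you cannot take $K_4$ to be the distance from $J^+$ to $\partial[a^+,b^+]$. (Likewise, nothing lets you ``preserve the property that $H_{n+1}$ meets $[c^+,d^+]$''; fortunately that property is not needed, since any subinterval of $[a^+,b^+]$ of length less than $d^+-c^+$ automatically lies in one of the two domains.) This part is repairable: covering $[c^+,d^+]$ already covers $B(J^+,K_4)$ with $K_4$ the distance from $J^+$ to $\partial[c^+,d^+]$, or, as the paper does, one appends a tail of bounded length, $g_1$ followed by $g_0^s$, exploiting $g_1(c^+)=a^+$ and $g_0(a^+)=a^+$, to cover all of $[a^+,d^+]$; that same bounded tail is also what makes precise your one-line assertion in Acc$+(J^+)$ that the forward orbit of $\interior J^+$ covers $[a^+,b^+]$.
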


We postpone the proof of the above proposition and derive first some consequences of it. In view of Remark~\ref{r.continuation} we have the following:

\begin{remark}
\label{r.robustblenders}
Consider an IFS $\{f_i\}$ of diffeomorphisms $f_0,\dots,f_{k-1}\colon \mathbb{S}^1\to \mathbb{S}^1$ satisfying the
hypotheses of Proposition~\ref{p.example1}.
Then every family of circle diffeomorphisms  $g_0,\dots,g_{k-1}$ that are
$C^1$-close enough to $f_0,\dots,f_{k-1}$ also satisfies these hypotheses.
\end{remark}

The following
is a standard  simple consequence of Proposition~\ref{p.example1}
whose proof is omitted. For the transitivity part see, for instance, the arguments in \cite[Section 5]{DiaGelRam:14} written using symbolic representations. The robustness follows from Remark~\ref{r.robustblenders}.

\begin{proposition}
Consider an IFS $\{f_i\}$ of  diffeomorphisms $f_0,\dots,f_{k-1}\colon \mathbb{S}^1\to \mathbb{S}^1$
satisfying the conditions of Proposition~\ref{p.example1}.
The the associated step skew-product map $F$ defined as in~\eqref{eq:sp} is (robustly) transitive.
\end{proposition}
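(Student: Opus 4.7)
By symmetry, applying the argument to the IFS of inverses $\{f_i^{-1}\}$ (which has a contracting blender), it suffices to verify Axioms CEC$+(J^+)$ and Acc$\pm(J^+)$ for any closed $J^+\subset(c^+,d^+)$, using the expanding blender maps $g_0=f_{[\xi_0\dots\xi_r]}$ and $g_1=f_{[\eta_0\dots\eta_\ell]}$. I would set $K_4\eqdef\tfrac{1}{2}\dist(J^+,\bS^1\setminus(c^+,d^+))$ so that $B(J^+,K_4)\subset(c^+,d^+)\subset[a^+,b^+]$, and $K_5\eqdef(\log\beta)/\max\{r+1,\ell+1\}$; the remaining constants $K_1,K_2,K_3$ will be extracted from the blender covering lemma below.

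The heart of the proof is the following blender covering lemma: there exist $K_1,K_2',K_3'>0$ such that for every interval $H\subset[a^+,b^+]$ with $H\cap[c^+,d^+]\ne\emptyset$ and $|H|<K_1$, there is a composition $G=g_{i_n}\circ\cdots\circ g_{i_1}$ with $n\le K_2'|\log|H||+K_3'$, defined on a subinterval $H'\subset H$, such that $G(H')\supset[a^+,b^+]$ and $|G'(x)|\ge\beta^n$ on $H'$. I would prove this by induction on the size of the interval, choosing at each step whether to apply $g_0$ (when the current interval lies in $[a^+,d^+]$) or $g_1$ (when it lies in $[c^+,b^+]$); at least one applies because $|H|$ is small and $[a^+,d^+]\cup[c^+,b^+]=[a^+,b^+]$. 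Each step multiplies the length by at least $\beta$, and the induction continues as long as the image still intersects the superposition $[c^+,d^+]$. After $n\sim|\log|H||/\log\beta$ iterations the length exceeds $d^+-c^+$ and the image contains all of $[c^+,d^+]$; a bounded number of further applications of $g_0$ then extends the image to cover $[a^+,b^+]\supset B(J^+,K_4)$, using the full-branch property $g_0([a^+,d^+])=[a^+,b^+]$. Translating back to the original IFS $\{f_i\}$, the composition has length $\iota\le K_2|\log|H||+K_3$ and satisfies the expansion bound $\log|(f_{[\rho_0\dots\rho_{\iota-1}]})'(x)|\ge n\log\beta\ge\iota K_5$, verifying Axiom CEC$+(J^+)$.

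For Axioms Acc$\pm(J^+)$, I would combine the blender covering lemma with the hypothesis that every $x\in\bS^1$ has a forward and a backward iterate in $\interior[a^+,b^+]$. For Acc$+(J^+)$, applying the covering lemma to $H=J^+$ yields a composition $G$ and subinterval $J''\subset J^+$ with $G(\interior J'')\supset\interior[a^+,b^+]$; concatenating with a backward iterate of $x$ landing in $\interior[a^+,b^+]$ then expresses $x$ as a forward iterate of some $z\in\interior J^+$. For Acc$-(J^+)$, I would combine a forward iterate landing in $\interior[a^+,b^+]$ with topological mixing of $\{g_0,g_1\}$ on $[a^+,b^+]$ (a standard consequence of the blender covering lemma), ensuring that forward orbits of any point in $\interior[a^+,b^+]$ eventually visit $\interior J^+$.

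The main obstacle is the inductive step in the blender covering lemma, specifically ensuring that the iterated interval continues to intersect the superposition region $[c^+,d^+]$ so that the induction can proceed. This depends on a careful geometric case analysis based on the position of the current interval relative to $a^+,c^+,d^+,b^+$, and exploits crucially the overlap of the domains of $g_0$ and $g_1$ on $[c^+,d^+]$ together with the boundary conditions $g_0(a^+)=g_1(c^+)=a^+$ --- this is the essential blender mechanism distinguishing the construction from a simple expanding IFS.
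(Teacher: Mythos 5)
Your proposal never actually proves the statement. Since the IFS is \emph{assumed} to satisfy the hypotheses of Proposition~\ref{p.example1}, the Axioms CEC$\pm(J^\pm)$ and Acc$\pm(J^\pm)$ are already available and re-deriving them (which is what almost all of your argument does) is not the content of this proposition. The content is twofold. First, the passage from the fiber axioms to topological transitivity of the skew-product $F$ on $\Sigma_k\times\bS^1$: you write ``it suffices to verify the Axioms'' but give no argument for this reduction. One needs something like Lemma~\ref{lem:transssitivo} plus the identification of Axiom~T with transitivity of $F$: given open sets $U,V\subset\Sigma_k\times\bS^1$, reduce to cylinder-times-interval sets, use Acc$-(J^+)$ to send a subinterval of the fiber part of $U$ into $\interior J^+$, CEC$+(J^+)$ to expand it over $J^+$, and Acc$+(J^+)$ to spread it over $\bS^1$, then insert the resulting word between the defining words of the two cylinders. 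Second, \emph{robustness}: the proposition asserts that every $C^1$-close family of fiber maps yields a transitive skew-product, and your proposal never mentions perturbations. This requires the openness of the hypotheses of Proposition~\ref{p.example1} (Remarks~\ref{r.continuation} and~\ref{r.robustblenders}: blenders persist with continuously varying data, and the condition that every point has forward and backward iterates in the interiors of the blender domains is open, by compactness the iteration times are uniformly bounded). Even if your covering lemma were fully carried out, you would only have reproved Proposition~\ref{p.example1} for the unperturbed IFS and concluded nothing about $F$, let alone about nearby systems.

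There is also a conceptual error inside the covering argument you sketch. The induction does \emph{not} require the iterated interval to keep intersecting the superposition region $[c^+,d^+]$, and in general it will not; the correct continuation criterion, as in the proof of Proposition~\ref{p.example1}, is that the current interval be \emph{contained} in $[a^+,d^+]$ or in $[c^+,b^+]$. This is automatic as long as the interval lies in $[a^+,b^+]$ and has length less than $d^+-c^+$ (an interval shorter than the superposition gap cannot stick out on both sides), and the process stops exactly at the first time the interval is contained in neither, at which moment it must contain $[c^+,d^+]$; a bounded number of further iterates of $g_1$ and $g_0$ (using $g_1(c^+)=g_0(a^+)=a^+$ and $g_0([a^+,d^+])=[a^+,b^+]$) then produces the covering. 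So the ``main obstacle'' you single out is not an obstacle at all, and the justification you give for the inductive step (that $[a^+,d^+]\cup[c^+,b^+]=[a^+,b^+]$) yields only intersection, not the containment you actually need. A smaller issue in the same spirit: Axiom CEC$+(J^+)$ demands the expansion estimate at every $x\in H$, not only on the subinterval $H'$ on which your composition covers; this is handled by noting that all iterates of the full interval stay in the expansion domains during the inductive phase and that the final connecting block has uniformly bounded length.
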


\begin{proof}[Proof of Proposition~\ref{p.example1}]
We only prove the statement about an interval $J^+$, the other one is analogous.
Let $J=J^+$ be a closed interval in $(c,d)=(c^+,d^+)$ and let $[a,b]=[a^+,b^+]$ and $\beta=\beta^+$ a
corresponding expansion number.
We proof that Axioms CEC$+(J)$ and Acc$\pm(J)$ hold. 

We start with a preliminary construction.
 Let $\alpha\eqdef d-c$.
For any interval $I=I_0\subset [a,b]$ with length less that $\alpha$ we  either have
 $I\subset [a,d]$ or $I\subset [c,b]$. In the first case let $I_1=g_0(I_0)$, otherwise let
$I_1=g_1(I_0)$.
We call $I_1$ the \emph{successor} of $I_0$.
Arguing now inductively,  for $k\ge1$ let the interval $I_{k}$ be the successor of $I_{k-1}$, repeating this process as long as the interval $I_{k-1}$ is contained either in $[a,d]$ or in  $[c,b]$.

The expansion property 1. of the blender implies $|I_k|\ge \beta^k |I_0|$. Hence, there is a first $\iota$ such that every of the intervals $I_0,\dots, I_{\iota-1}$ is contained either in
 $[a,d]$ or in $[c,b]$, and that $I_\iota\supset [c,d]$.
Letting $\widetilde \beta>1$ be an upper  bound of the derivative of $g_0$ in $[a,d]$
 and of the derivative of $g_1$ in $[c,b]$, a straightforward calculation implies that
$$
	\frac{ \log \alpha -\log |I_0| }{\log \widetilde\beta}
	\le \iota\le \frac{ \log \alpha -\log |I_0| }{\log \beta}.
$$

Applying this construction now to $I_0=J$, there are a closed subinterval
$K\subset J$ and a finite sequence $(i_0\dots i_\iota)$ such that
$f_{[i_0\dots\, i_\iota]}( K)=[c,d]$. Further, we have $f_{[i_0\dots\, i_\iota \, 1]}( K) \supset[a,a+\beta\alpha]$ and hence there is a first integer $s$, independent of $J$ and $K$, such that
$f_{[i_0\dots\, i_\iota\,1\,0^s]}( K)\supset [a,d]$.
  This implies that with $n=\iota+2+s$
 $$
  \beta^n\le
 	(f_{[i_0\dots i_\iota\,1\,0^s]})' (x) \le {\widetilde\beta}^n
 $$
 and
$$
 \frac{\log (d-c) -\log |J|}{\log \widetilde \beta} + 2 + s\le
n \le \frac{\log (d-c) -\log |J|}{\log \beta} +2 + s.
$$
This immediately implies  Axiom CEC$+(J)$.

The above construction implies also that the orbit of any (nontrivial)
closed  subinterval of $(a,b)$ covers $[a,b)$. Indeed, just note that by construction
$a$ belongs to the interior of $g_{[i_0\dots i_\iota\,1]}(J)$.
This implies that the forward $g_0$ iterates of this interval covers $[a,b]$. We will summarize these remarks below (when arguing similarly for a contracting blender).

\begin{scholium}
\label{r.forwardandbackward}
The forward orbit of any nontrivial closed subinterval of $(a^+,b^+)$
for the IFS $\{f_i\}$
covers $[a^+, b^+]$.
The backward orbit of any nontrivial closed subinterval of $(a^-,b^-)$
for the IFS $\{f_i\}$
covers $[a^-, b^-]$.
\end{scholium}

Now, our hypothesis that every $x\in\bS^1$ has a backward iterate in the interior of $[a,b]=[a^+,b^+]$ together with the first part of Remark~\ref{r.forwardandbackward}
 imply Axiom Acc$+(J)$.

To see Axiom Acc$-(J)$ recall that every
$x\in\bS^1$ has a backward iterate $(a^-,b^-)$, thus there is a small
subinterval $K$ of $J$ having a backward iterate in $(a^-,b^-)$. The second part of
Scholium~\ref{r.forwardandbackward} implies  that the backward orbit of $K$
(thus of $J$) covers $[a^-,b^-]$.  By hypothesis, every
$x\in\bS^1$ has a forward iterate in $(a^-,b^-)$, which implies that
$x$ belongs to the backward orbit of $J$ and proves Axiom Acc$-(J)$. This completes the proof of the proposition
for closed subintervals of $(c^+,d^+)$.
\end{proof}

The following is an immediate consequence of our constructions or by applying Lemma~\ref{lem:transssitivo}.

\begin{corollary}
Consider an IFS satisfying the hypotheses of Proposition~\ref{p.example1}.

Then the IFS satisfies Axiom T and there is an interval $J\subset\bS^1$ such that the IFS satisfies Axioms CEC$\pm(J)$ and Acc$\pm(J)$.
\end{corollary}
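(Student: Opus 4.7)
My plan is to derive the conclusion directly from Lemma~\ref{lem:transssitivo}, choosing two blending intervals inside the superposition intervals of the two blenders. The strategy breaks into three short steps.

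First, I will invoke Proposition~\ref{p.example1} twice: fix any closed subinterval $J^+\subset(c^+,d^+)$ of the superposition interval of the expanding blender and any closed subinterval $J^-\subset(c^-,d^-)$ of the superposition interval of the contracting blender. Proposition~\ref{p.example1} then grants Axioms CEC$+(J^+)$ and Acc$\pm(J^+)$ as well as CEC$-(J^-)$ and Acc$\pm(J^-)$. This supplies the first batch of hypotheses demanded by Lemma~\ref{lem:transssitivo}.

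Second, I will verify the remaining hypothesis of Lemma~\ref{lem:transssitivo}, namely that every $x\in\bS^1$ admits a forward and a backward iterate in $\interior J^+$ and in $\interior J^-$. This does not require any additional spreading argument via the blender dynamics; it is already contained in the Acc axioms just established. Indeed, by definition Acc$+(J^+)$ is the statement $\mathcal{O}^+(\interior J^+)=\bS^1$, which unpacks precisely to: every $x\in\bS^1$ has a backward iterate in $\interior J^+$. Dually, Acc$-(J^+)$ gives every $x$ a forward iterate in $\interior J^+$. The analogous reading of Acc$\pm(J^-)$ supplies the two missing iterates relative to $J^-$.

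Finally, I will apply Lemma~\ref{lem:transssitivo} with these $J^+$ and $J^-$ to obtain Axiom T together with a single interval $J\subset\bS^1$ satisfying all four axioms CEC$\pm(J)$ and Acc$\pm(J)$. I do not anticipate any genuine obstacle: the heavy lifting (the blender-based verification of CEC and Acc, and the bootstrapping to a common interval) has already been performed in Proposition~\ref{p.example1} and Lemma~\ref{lem:transssitivo}. The only point requiring a moment of care is the observation that the point-iterate hypothesis of Lemma~\ref{lem:transssitivo} is not an extra condition beyond Acc$\pm(J^\pm)$ but merely a restatement of it, so the corollary falls out immediately.
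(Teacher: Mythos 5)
Your proposal is correct and is essentially the paper's own route: the paper states the corollary as an immediate consequence of the constructions or of Lemma~\ref{lem:transssitivo}, and you simply spell out that application, feeding in the conclusions of Proposition~\ref{p.example1} for closed intervals $J^+\subset(c^+,d^+)$ and $J^-\subset(c^-,d^-)$. Your observation that the point-iterate hypothesis of Lemma~\ref{lem:transssitivo} is already equivalent to Acc$\pm(J^\pm)$ (since $\mathcal{O}^+(\interior J)=\bS^1$ means every point has a backward iterate in $\interior J$, and dually for $\mathcal{O}^-$) is accurate under the paper's definitions, so no further verification is needed.
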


\subsection{Contraction-expansion-rotation examples}\label{sec:Gorod}

The hypotheses in the following proposition are motivated by the constructions in \cite[Theorem 2]{GorIlyKleNal:05}, where -- for simplicity of this exposition -- we replace
the assumption of forward minimality  in~\cite{GorIlyKleNal:05} by the existence of an irrational rotation. More general cases can be treated
by slight modifications of our arguments.

\begin{proposition} 
Consider an IFS $\{f_i\}$ of diffeomorphisms $f_0,\dots, f_{k-1}\colon \mathbb{S}^1\to \mathbb{S}^1$, $k\ge2$.
Assume that
there are finites sequences $(\xi_0\dots \xi_r)$, $(\eta_0\dots \eta_s)$,
and  $(\zeta_0\dots \zeta_t)$ such that
\begin{enumerate}
\item
	$f_{[\xi_0\dots \,\xi_r]}$ has an attracting fixed point and
	$f_{[\eta_0\dots \,\eta_s]}$ has a repelling fixed point,
\item
	$f_{[\zeta_0\dots \,\zeta_t]}$ is an irrational rotation.%
\footnote{This hypothesis can be replaced by the assumption that the IFS has a ``sufficiently dense'' orbit, for instance, that every point has forward and backward iterates in the basin of attraction and in the basin of repulsion of fixed points in item 1.}
\end{enumerate}

Then  there are intervals $J^+,J^-\subset\bS^1$
such that any IFS $\{g_i\}$ of maps  $g_0$, $\dots$, $g_{k-1}\colon \mathbb{S}^1\to \mathbb{S}^1$
$C^1$-close enough to $f_0,\dots, f_{k-1}$,
satisfies Axioms CEC$+(J^+)$ and Acc$\pm(J^+)$ and
Axioms CEC$-(J^-)$ and Acc$\pm(J^-)$.
\end{proposition}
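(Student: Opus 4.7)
The plan is to verify each of the four axioms directly by combining local hyperbolicity at the two fixed points with the covering provided by the rotation, and then to observe that every ingredient is an open condition in the $C^1$-topology on $\{f_i\}$.

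\emph{Setup and accessibility.} Let $p^+$ be the (hyperbolic) repelling fixed point of $T_+\eqdef f_{[\eta_0\ldots\eta_s]}$ and $q^-$ the (hyperbolic) attracting fixed point of $T_-\eqdef f_{[\xi_0\ldots\xi_r]}$. By the implicit function theorem both continuations $p^+(g)$, $q^-(g)$ persist under $C^1$-perturbations of $\{f_i\}$, together with their hyperbolicity rates $\lambda_+>1$ and $\lambda_-\in(0,1)$, and the expansion/contraction neighborhoods can be chosen uniformly. Take $J^+$ (resp.\ $J^-$) to be a small closed interval centered at $p^+$ (resp.\ $q^-$) inside the expansion neighborhood of $T_+$ (resp.\ of $T_-^{-1}$). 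The axioms indexed by $-$ at $J^-$ will follow from the axioms indexed by $+$ applied to the inverse IFS $\{f_i^{-1}\}$, which satisfies the same set of hypotheses: in $\{f_i^{-1}\}$ the word corresponding to $(\xi_0\ldots\xi_r)$ has $q^-$ as a repelling fixed point, and the inverse of an irrational rotation is again an irrational rotation. Write $R\eqdef f_{[\zeta_0\ldots\zeta_t]}$. Since $R$ is an irrational rotation and $\interior \tilde J^+$ is an open arc, one has $\bigcup_{k=0}^N R^k(\interior \tilde J^+)=\bS^1$ for some finite $N$ and a slightly smaller $\tilde J^+\subset J^+$; for $g_i$ sufficiently $C^1$-close to $f_i$ the iterates $g_{[\zeta_0\ldots\zeta_t]}^k$ with $k\le N$ are $C^0$-close to $R^k$ by continuity, so one still has $\bigcup_{k=0}^N g_{[\zeta_0\ldots\zeta_t]}^k(\interior J^+)=\bS^1$. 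This gives Acc$+(J^+)$, and the same argument with $R^{-1}$ gives Acc$-(J^+)$.

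\emph{Controlled expansion and covering.} For CEC$+(J^+)$, given a small interval $H$ intersecting $J^+$, I will construct the covering word by alternating expansion phases and rotation phases. In each expansion phase, $T_+$ is applied as long as the image stays in the expansion neighborhood of $p^+$, so that the image length is multiplied by a factor bounded below by a fixed constant $>1$ per phase; when the image is about to exit this neighborhood, a bounded word in $R$ (chosen using density of the $R$-orbit) shifts it back to a position near $p^+$ where the next expansion phase can take over. After $O(|\log|H||)$ such phases the image has macroscopic size exceeding $|J^+|+2K_4$ and sits near $p^+$; one final bounded rotation power positions the image so as to contain $B(J^+,K_4)$. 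The resulting word has length $\ell\le K_2|\log|H||+K_3$, and its total log-derivative at every point of $H$ is at least $\ell K_5$ for a uniform $K_5>0$: rotations contribute zero log-derivative and only $O(1)$ bounded symbols per phase, while each expansion phase contributes a uniform positive log-derivative per $T_+$-application. Robustness follows since $\lambda_+(g)$ varies continuously with $g$ and only a uniformly bounded number of rotation iterates is ever used; the finite family of rotations used for positioning perturbs continuously, with a slight shrinkage of $\tilde J^+$ and of the target $B(J^+,K_4)$ absorbing the perturbation. Applying the same construction to $\{f_i^{-1}\}$ yields CEC$-(J^-)$.

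\emph{Main obstacle.} The key subtlety is that a $C^1$-perturbation of an irrational rotation need not be a rotation and its long-term orbit structure can change drastically, so one cannot rely on equidistribution of long rotation orbits for the perturbed IFS. This is bypassed because the whole construction uses only a uniformly bounded number of rotation iterates per expansion phase, reducing the required positioning and covering statements to open conditions on a fixed finite family of words, which are stable under $C^1$-perturbation by continuity.
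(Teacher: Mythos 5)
Your proposal is correct and follows essentially the same route as the paper: a robustly expanding interval around the repelling fixed point (derivative $>1+\varepsilon$ for all $C^1$-nearby $g$'s), alternated with a uniformly bounded number of iterates of the (perturbed) rotation word to reposition small intervals and, at the end, to cover the target, which yields the $K_2\lvert\log\lvert H\rvert\rvert+K_3$ length bound and a uniform $K_5$, with robustness exactly because only boundedly many rotation iterates are ever used, and with the $J^-$ axioms obtained by passing to the inverse IFS. The only cosmetic differences are that the paper applies the expanding word once per phase with explicit size thresholds $(1-\varepsilon/2)\lvert J\rvert$ and $(1-\varepsilon)\lvert J\rvert$ (which makes your ``about to exit the neighborhood'' stopping rule precise), and it extracts both accessibility axioms from the same covering construction rather than from the rotation alone.
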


%
%
%
%
%
%

\begin{proof}
For notational simplicity let us prove the proposition when $r=s=t=0$
and  $(\xi_0\ldots \xi_r)=0$, $(\eta_0\dots \eta_s)=0$,
and  $(\nu_0\dots \nu_t)=1$.  
The general case is similar. 

As in the proof of Proposition~\ref{p.example1}, we show Axioms CEC$+(J^+)$ and Acc$\pm(J^+)$ only,  Axioms CEC$-(J^-)$ and Acc$\pm(J^-)$ follow similarly.

We begin by selecting appropriate neighbourhoods of $f_0$ and $f_1$.
As the map
 $f_0$ has a  repelling fixed point there are an interval $J\subset \mathbb{S}^1$
 and a neighbourhood $\cV(f_0)$ of $f_0$ such that for every $g_0\in \cV(f_0)$ and
 $x\in J$ it holds
 $g_0'(x)>1+\varepsilon$.

 Since $f_1$ is an irrational rotation there are a neighbourhood $\cV(f_1)$ of $f_1$ and numbers
$m_0$ and $\ell_0$ such that
\begin{itemize}
\item
for every $g_1\in \cV(f_1)$ and every interval $A\subset \mathbb{S}^1$ of size less that $(1-\varepsilon/2)|J|$ there exists
$m=m(A)\le m_0$ such that $g_1^{m}(A)\subset J$,
\item
for every $g_1\in \cV(f_1)$ and every pair of  intervals
$B\subset \mathbb{S}^1$ with $|B|>(1-\varepsilon/2)|J|$ and $C\subset \mathbb{S}^1$ with
$|C|< (1-\varepsilon)|J|$
 there exists
$\ell=\ell (B,C) \le \ell_0$ such that $g_1^\ell(B)$ contains $C$.
\end{itemize}

Take now any pair of intervals $H$ and $I$ with
$|H|, |I| < (1-\varepsilon)|J|$ and any pair of maps
$g_0\in \cV(f_0)$ and $g_1\in \cV(f_1)$.
We will exhibit
a trajectory $\xi_0\ldots\xi_{j-1}$
with
 $j \leq K_2 |\log|H|| + K_3$ such that
 $g_{[\xi_0\ldots\,\xi_{j-1}]}(H) \supset I$ and
  $\log |g_{[\xi_0\ldots\,\xi_{j-1}]}'(x)| > K_5 j$ for every $x\in H$.
  This will imply the proposition.

The argument now goes as follows,
take any pair
of maps $g_0\in \cV(f_0)$ and $g_1\in \cV(f_1)$ and any pair
of intervals $H$ and $I$ with $|H|, |I| < (1-\varepsilon)|J|$.
Consider the number $m=m(H)$ associated to $H$ with
$g^{m}_1(H) \subset J$ and apply $g_0$ to $g^{m}_1(H)$.
In this way we get the interval $g_0 \circ g_1^m(H)$ whose
size is at least $(1+\varepsilon) |H|$.
If the resulting interval is shorter than $(1-\varepsilon/2)|J|$, we can (and will)  repeat the procedure.
If it is larger then we rotate it onto $I$, that is, we consider $g_1^{\ell}\circ g_0 \circ g_1^m(H)$ where
$\ell=\ell  (g_0 \circ g_1^m(H),I)$ is the number associated to
 $g_0 \circ g_1^m(H)$ and $I$. By definition,
 $g_1^{\ell}\circ g_0 \circ g_1^m(H)\supset I$.

At each step of this procedure we increase the size of the interval at least
by a factor $(1+\varepsilon)$, so the number $k$ of steps we need to cover
the interval $I$ satisfies
$$
k\leq \frac {|\log |H||} {\log (1+\varepsilon)}.
$$
 Each step of the procedure
 (when the size of the considered iterations of $H$ is less than $(1-\varepsilon/2)|J|$)
 takes at most $m_0+1$ iterations. Finally, when one finally gets (after at most $k$ steps)
 an interval of size at least $(1-\varepsilon/2)|J|$) one needs at most
 $\ell_0$ iterations at the end to cover $I$.
 Hence  the total number
 of iterations needed to cover $I$ is at most
  $$
  j\le  k(m_0+1)+\ell_0\le k(m_0+1+\ell_0)\le (\ell_0+1+m_0)
  \frac {|\log |H||} {\log (1+\varepsilon)}.
  $$
  Finally, the accumulated derivative at each point of $H$ is not smaller than
  $$
  (1+\varepsilon)^k \ge \big( (1+\varepsilon)^{1/(m_0+1+\ell_0)} \big)^j.
  $$
Therefore
  $$
  \log |g_{[\xi_0\ldots\xi_{j-1}]}'(x)| \ge K_5 j,
  \qquad
  K_5 =
   \log \big( (1+\varepsilon)^{1/(m_0+1+\ell_0)} \big) .
  $$
  This concludes the proof of the proposition.
  \end{proof}
\subsection{Robust transitivity: general comments}\label{sss.robusttransitivity}
We will explain why the hypotheses in Proposition~\ref{p.example1}
(and hence the Axioms CEC$\pm$ and Acc$\pm$) are very natural in the
robustly transitive setting (for  step skew-products with fiber $\mathbb{S}^1$).

For this we need to review some constructions in \cite{BonDiaUre:02}  (see also the extensions in \cite{RodRodUre:07}). These papers consider $C^1$-robustly transitive
and non-hyperbolic diffeomorphisms having periodic points of different indices
(dimension of the unstable direction) and a partially hyperbolic
 splitting $E^{ss}\oplus E^c\oplus E^{uu}$ with three non-trivial bundles
such that $E^c$ is one-dimensional,
$E^{ss}$ is uniformly contracting, and $E^{uu}$ is uniformly expanding.
In this setting, the strong stable foliation (tangent to $E^{ss}$) and
the strong unstable foliation (tangent to $E^{uu}$) are well defined. In the case when there exists a foliation by circles tangent to $E^c$, \cite[Theorem 1.6]{BonDiaUre:02} claims that there is an open and dense
subset of those systems whose strong stable and unstable foliations are both minimal (every leaf is dense).

The density of the strong  unstable (strong stable) foliation translates to the skew-product setting as follows.
Every point has a forward (backward) orbit which is dense in $\mathbb{S}^1$ by the underlying IFS. This immediately translates to  the accessibility conditions
Acc$\pm$ that holds for any non-trivial interval of the circle.

The second ingredient of \cite{BonDiaUre:02} is the existence of blenders. Without giving all the details, we note that
the minimality of the strong stable foliation is guaranteed by the existence  of a finite family of
center-unstable blenders that intersect nicely every leaf of the strong stable and strong unstable foliations.
This property  turns out to be robust and involves only leaves of bounded size. A similar condition is used
to guarantee the minimality of the strong unstable foliation, now considering center-stable blenders.

Let us observe that blenders are just a special type of hyperbolic set satisfying some geometrical properties (roughly, a superposition-like property). Definition~\ref{d.blenders} just translates the notion of a blender to the 
setting of skew-products.

The transitivity of the diffeomorphisms implies that all center-unstable blenders
are homoclinically related (their invariant manifolds intersect cyclically). A similar assertion holds for center-stable blenders. This homoclinic relation between blenders
implies that it is enough to consider just one center-unstable blender and one center-stable blender, exactly as in Proposition~\ref{p.example1}.

The property of the
orbit of the strong unstable and strong stable leaves intersect nicely the corresponding blenders translates to following the property:  every point has forward and backward iterates in the domain of definition of the one-dimensional blenders (both the contracting and the expanding).

In the following table we state a ``dictionary" of the terms involved:

\begin{center}
\begin{tabular}{p{6cm}|p{6cm}}
$C^1$-robustly transitive diffeormorphism & step skew-product map\\
\hrule & \hrule \\
$\bullet$ center-stable blender & $\bullet$ contracting blender \\
$\bullet$ center-unstabe blender & $\bullet$ expanding blender \\
$\bullet$ the unstable foliation crosses nicely the blender & $\bullet$ every point has a forward iterate in the interior of the domain of the blender \\
$\bullet$ the stable foliation crosses nicely the blender & $\bullet$ every point has a backward iterate in the interior of the domain of the blender
\end{tabular}
\end{center}
\medskip

Let us finally observe that the proof of \cite{BonDiaUre:02} can be translated
{\emph{mutatis mutandi}}
to prove that the hypotheses of
Proposition~\ref{p.example1} holds open and densely in the step skew-product setting for robustly transitive maps
having simultaneously periodic points with are fiber contracting and periodic points which are fiber expanding.

\begin{proposition}
Consider the set $\cS=\cS(\Sigma_k\times\bS^1)$, $k\ge1$, of all step skew-product maps $F$ as in~\eqref{eq:sp}
with $C^1$-fiber maps
which are robustly transitive and have periodic points of different indices.
Then there is an $C^1$-open and dense subset of $\cS$ consisting of step skew-products
satisfying Axioms T, CEC$\pm(J)$, and Acc$\pm(J)$ for some interval $J$
in $\mathbb{S}^1$.
\end{proposition}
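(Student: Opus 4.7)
\medskip

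\textbf{Proof proposal.} The plan is to translate the Bonatti--D\'\i az--Ures argument from \cite{BonDiaUre:02} into the step skew-product setting, exploiting the fact that in our framework the center bundle is literally the fiber direction, so the ``blender-crossing'' hypotheses of Proposition~\ref{p.example1} are exactly the skew-product analogues of the minimality of the strong stable/unstable foliations. The argument splits into openness and density.

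For openness, I first observe that the existence of an expanding and a contracting blender for the IFS $\{f_i\}$ is $C^1$-open by Remark~\ref{r.continuation}, and that the intervals $[a^\pm,b^\pm]$ and $[c^\pm,d^\pm]$ depend continuously on the IFS. The accessibility-type condition ``every $x\in\bS^1$ has a forward and a backward iterate in the interior of $[a^\pm,b^\pm]$'' is also $C^1$-open: by compactness of $\bS^1$ it amounts to finitely many open conditions on the family of maps. Hence the hypotheses of Proposition~\ref{p.example1} are $C^1$-open, and consequently so are the axioms CEC$\pm(J^\pm)$ and Acc$\pm(J^\pm)$ (with $J^\pm$ moving continuously). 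Applying Lemma~\ref{lem:transssitivo} to promote these to a single common interval $J$ yields the openness of the conclusion.

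For density, I would argue as follows. Fix $F\in\cS$. By hypothesis there exist periodic orbits of different fiber indices, that is, a periodic point whose fiber-derivative product is contracting and one whose fiber-derivative product is expanding. Step 1: by an arbitrarily small $C^1$-perturbation localized near these periodic points I upgrade them, via the standard Bonatti--D\'iaz blender construction translated to the one-dimensional fiber (adjusting two compositions $g_0,g_1$ so that they exhibit a superposition covering as in Definition~\ref{d.blenders}), into a contracting blender with domain $[a^-,b^-]$ and an expanding blender with domain $[a^+,b^+]$; this preserves robust transitivity and the existence of periodic points of both indices. Step 2: I need to verify that every $x\in\bS^1$ has a forward and a backward iterate in the interiors of $[a^\pm,b^\pm]$. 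Here I invoke the robust transitivity of the (perturbed) skew-product: a generic perturbation makes the associated IFS both forward- and backward-minimal on $\bS^1$; this is the skew-product counterpart of the minimality of the strong stable and strong unstable foliations established open-and-densely in \cite{BonDiaUre:02,RodRodUre:07}. Combined with compactness of $\bS^1$, this yields the required accessibility property. Step 3: apply Proposition~\ref{p.example1} to obtain Axioms CEC$\pm(J^\pm)$ and Acc$\pm(J^\pm)$, then Lemma~\ref{lem:transssitivo} to pass to a single interval $J$; Axiom T is immediate from transitivity.

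The main obstacle is Step 2 of the density part, namely establishing that \emph{open and densely} inside $\cS$ the IFS $\{f_i\}$ is simultaneously forward- and backward-minimal on $\bS^1$. In the diffeomorphism setting this is the content of the BDU theorem and uses a delicate perturbation argument combining blenders with leaves of the strong invariant foliations. In our much simpler skew-product context the proof should go through more cleanly: after the blenders of Step~1 are in place, one can use the expanding blender to spread forward orbits of any point to the whole circle by a further small perturbation of a few fiber maps (using Scholium~\ref{r.forwardandbackward}), and symmetrically for the contracting blender and backward orbits. The density of the subset of $\cS$ where both minimalities hold then follows by a Baire/perturbation argument, while openness of the joint property is guaranteed by the robustness of blenders together with compactness of $\bS^1$. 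Combining openness (proved first) with the density argument above yields the proposition.
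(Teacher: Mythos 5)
Your proposal follows essentially the same route as the paper: the paper itself gives no detailed argument for this proposition but asserts that the proof of \cite{BonDiaUre:02} translates \emph{mutatis mutandis} via the dictionary (contracting/expanding blenders for center-stable/center-unstable blenders, the accessibility conditions for minimality of the strong foliations), which is exactly the translation you carry out, including the openness via persistence of blenders (Remark~\ref{r.continuation}) and the passage to a single interval through Lemma~\ref{lem:transssitivo}. Your acknowledged weak point (the open-and-dense accessibility/minimality step) is precisely the part the paper also delegates to the BDU perturbation argument, so your sketch is at the same level of completeness as, and consistent with, the paper's own treatment.
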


Approximating general skew-products by step skew-products, one can get the following:

\begin{corollary}
Consider the set $\cT=\cT(\Sigma_k\times\bS^1)$ of all skew-product maps $F$ as in~\eqref{eq:sp} with $C^1$-fiber maps
which are robustly transitive and have periodic points of different indices.
Then there is an $C^1$-dense subset of $\cT$ consisting of step skew-products
satisfying Axioms T, CEC$\pm(J)$, and Acc$\pm(J)$ for some interval $J$
in $\mathbb{S}^1$.
\end{corollary}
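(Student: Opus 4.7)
The strategy is to deduce this corollary from the preceding proposition via a two-step approximation. Given $F\in\cT$, I first seek a step skew-product $\widetilde F$ which is $C^1$-close to $F$. For a general $C^1$ skew-product $F(\xi,x)=(\sigma(\xi),f_\xi(x))$, the assignment $\xi\mapsto f_\xi\in\mathrm{Diff}^1(\bS^1)$ is continuous on the compact space $\Sigma_k$. Choosing for each symbol $i\in\{0,\ldots,k-1\}$ a distinguished sequence $\xi^{(i)}\in[i]$ and setting $\widetilde f_i\eqdef f_{\xi^{(i)}}$ produces a step skew-product $\widetilde F$; by uniform continuity of $\xi\mapsto f_\xi$, the $C^1$-distance between $\widetilde F$ and $F$ can be made arbitrarily small after a minor modification (for instance, pre-composing with a homotopy along cylinders of high depth, or combining with a standard partition-of-unity argument over level-$N$ cylinders for $N$ large, which can be re-coded as a step skew-product over the original alphabet).

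Next, since robust transitivity and the existence of hyperbolic periodic points of different indices (here: a fiber-contracting and a fiber-expanding periodic point) are $C^1$-open conditions, every $\widetilde F$ sufficiently $C^1$-close to $F$ inherits both properties and therefore lies in $\cS=\cS(\Sigma_k\times\bS^1)$. The preceding proposition then provides a map $G\in\cS$ arbitrarily $C^1$-close to $\widetilde F$ which satisfies Axioms T, CEC$\pm(J)$ and Acc$\pm(J)$ for some interval $J\subset\bS^1$. By the triangle inequality, $G$ is $C^1$-close to $F$, which establishes the asserted density.

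\textbf{Main obstacle.} The only delicate point is the first approximation step: justifying that the step skew-products are $C^1$-dense in the space of general $C^1$ skew-products in such a way that the $C^1$-open conditions defining $\cT$ are preserved. The density itself is a standard consequence of the continuity of $\xi\mapsto f_\xi$ on the totally disconnected compact space $\Sigma_k$ (continuous functions on a Cantor set can be uniformly approximated by locally constant functions, and this upgrades to $C^1$-approximation of the fiber maps). The subtlety is that ``locally constant'' in this context means constant on cylinders of some depth $N$, which strictly speaking yields a step skew-product only after recoding the base by the sliding block code; however, this recoding does not affect the validity of the preceding proposition, so the argument goes through.
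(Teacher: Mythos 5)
Your argument is essentially the paper's: the paper deduces this corollary from the preceding proposition with only the remark that general skew-products are approximated by step skew-products, which is exactly your two-step scheme (uniform approximation of the continuous map $\xi\mapsto f_\xi$ by maps constant on cylinders, plus $C^1$-openness of robust transitivity and of having fiber-contracting and fiber-expanding periodic points, then the triangle inequality). The one caveat—which you already flag correctly in your final paragraph, despite the earlier phrase ``over the original alphabet''—is that constancy on depth-$N$ cylinders yields a step skew-product only after recoding the base to the alphabet of $N$-blocks, a point the paper itself glosses over and which is harmless since the proposition applies over any finite alphabet and the axioms imply membership in $\cT$.
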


\section*{Appendix. Entropy}

Let us recall the definition of topological entropy of a general set (i.e., not necessarily compact and invariant) following Bowen \cite{Bow:73}.

Consider a compact metric space $X$, a continuous map $F\colon X\to X$, a set $A\subset X$,  and a finite open cover $\mathscr{C} = \{C_1, C_2,\ldots, C_n\}$ of $X$. Given $U\subset X$ we write $U \prec \mathscr{C}$ if there is an index $j$ so that $U\subset C_j$, and $U\nprec\mathscr{C}$ otherwise.
Taking $U\subset X$ we define $n_{F,\mathscr{C}}(U)\eqdef 0$ if $U \nprec \mathscr{C}$, $n_{F,\mathscr{C}}(U)\eqdef \ell$ if $F^k(U)\prec \mathscr{C}$ for every $k\in \{0, \dots,  \ell-1\}$ and $F^\ell(U)\nprec\mathcal{C}$, and let $n_{F,\mathscr{C}}(U)\eqdef \infty$ otherwise. If $\mathcal U$ is a countable collection of open sets, for $d>0$ let
\[
	m(\mathscr C,d,\mathcal U)
	\eqdef  \sum_{U\in\mathcal U}e^{-d \,n_{F,\mathscr{C}}(U)}.
\]
Given a set $A\subset X$, let
\[
	m_{\mathscr{C}, d} (A)
	\eqdef  \lim_{\epsilon \to 0}\inf m(\mathscr C,d,\mathcal U),
\]
where the infimum is taken over all countable open covers $\mathcal U$ of $A$ such that $e^{-n_{F,\mathscr{C}}(U)}<\epsilon$ for each $U\in\mathcal U$.
The \emph{topological entropy} of $F$ on $A$ is
$$
	h_{\rm top}(F,A)
	\eqdef  \sup_{\mathscr{C}} h_{\mathscr{C}}(F,A) ,
	\quad\text{ where }\quad
	h_{\mathscr{C}}(F,A)
	\eqdef  \inf\{d\colon m_{\mathscr{C}, d}(A)=0\}.
$$
When $A=X$, we simply write $h_{\rm top}(F) = h_{\rm top}(F,X)$.
In~\cite[Proposition 1]{Bow:73}, it is shown that  in the case of a compact set $Y$ this definition is equivalent to the canonical definition of topological entropy (see, for example, \cite[Chapter 7]{Wal:82}).

Recall that entropy is \emph{countably stable}, that is, for every countable family of sets $A_1,A_2,\ldots\subset X$ we have
\begin{equation}\label{eq:entcounstab}
	h_{\rm top}\big(F,\bigcup_{i\ge1}A_i\big)	
	=\max_{i\ge1}h_{\rm top}(F,A_i).
\end{equation}

Recall also the following result for factor maps. Let $Y$ be a compact metric space and let $G\colon Y\to Y$ be a continuous map. Assume that $G$ is a (topological) factor of $F$, that is, assume there exists a continuous surjective map $\varpi\colon X\to Y$ such that $\varpi\circ F=G\circ \varpi$. Then by~\cite{LedWal:77}
\begin{equation}\label{eq:entropyproj}
	\sup_{\mu\colon\mu\circ\varpi^{-1}=\nu}h(\mu)
	= h(\nu)+\int_Y h_{\rm top}(F,\varpi^{-1}(\xi))\,d\nu(\xi).
\end{equation}
Observe that in the case $Y=\Sigma_k\times\bS^1$, $X=\Sigma_k$, and $\varpi(\xi,x)=\xi$, for every $\xi\in\Sigma_k$ we have $h_{\rm top}(F,\varpi^{-1}(\xi))=0$.

\bibliographystyle{amsplain}

\end{document}